\newcommand{\GG}{Gr_G\tilde{\times}Gr_G}
\newcommand{\TT}{\tilde{\times}}
\newcommand{\UU}{S_{\mu_1}\tilde{\times} S_{\mu_2-\mu_1}}
\newtheorem{thm}{Theorem}[section]
\newtheorem{lemma}[thm]{Lemma}
\newtheorem{proposition}[thm]{Proposition}
\newtheorem{corollary}[thm]{Corollary}
\newtheorem{remark}[thm]{Remark}
\theoremstyle{definition}
\newtheorem{definition}[thm]{Definition}
\theoremstyle{remark}
\newcommand{\F}{\mathcal{F}}
\newcommand{\G}{\mathcal{G}}
\numberwithin{equation}{section}
\begin{document}

\title{The Integral Geometric Satake Equivalence in Mixed Characteristic}

\author{Jize YU}
\address{Department of Mathematics, Caltech, 1200 East California Boulevard, Pasadena, CA
91125.
}
\curraddr{}
\email{jyu3@caltech.edu}
\thanks{}

\date{}

\begin{abstract}
Let $k$ be an algebraically closed field of characteristic $p$. Denote by $W(k)$ the ring of Witt vectors of $k$. Let $F$ denote a totally ramified finite extension of $W(k)[1/p]$ and $\mathcal{O}$ the its ring of integers. For a connected reductive group scheme $G$ over $\mathcal{O}$, we study the category $P_{L^+G}(Gr_G,\Lambda)$ of $L^+G$-equivariant perverse sheaves in $\Lambda$-coefficient on the affine Grassmannian $Gr_G$ where $\Lambda=\mathbb{Z}_{\ell}$ and $\mathbb{F}_{\ell}$ and prove it is equivalent as a tensor category to the category of  finitely generated $\Lambda$-representations of the Langlands dual group of $G$.
\end{abstract}

\maketitle
\tableofcontents

\section{Introduction}

\subsection{Main Result}
 Consider an algebraically closed field $k$ of characteristic $p>0$ and denote by $W(k)$ its ring of Witt vectors. Let $F$ denote a totally ramified finite extension of $W(k)[1/p]$ and $\mathcal{O}$ the ring of integers of $F$. Let $G$ be a connected reductive group over $\mathcal{O}$ and $Gr_G$ be the Witt vector affine Grassmannian defined as in \cite{Zh2}. In this paper, we consider the category $P_{L^+G}(Gr_G,\Lambda)$ of $L^+G$-equivariant perverse sheaves in $\Lambda$-coefficient on the affine Grassmannian $Gr_G$ for $\Lambda=\mathbb{F}_{\ell}$ and $\mathbb{Z}_{\ell}$, where $\ell$ is a prime number different from $p$. We call this category the Satake category and sometimes write it as $\textnormal{Sat}_{G,\Lambda}$ for simplicity. The convolution product of sheaves equips the Satake category with a monoidal structure. Let $\hat{G}_{\Lambda}$ denote the Langlands dual group of $G$, i.e. the canonical smooth split reductive group scheme over $\Lambda$ whose root
datum is dual to that of $G$. Our main theorem is the geometric Satake equivalence in the current setting. 
\begin{thm}
There is an equivalence of monoidal categories between $P_{L^+G}(Gr_G,\Lambda)$ and the category of representations of the Langlands dual group $\hat{G}_{\Lambda}$ of $G$ on finitely generated $\Lambda$-modules.
\end{thm}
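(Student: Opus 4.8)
The plan is to follow the Tannakian strategy of Mirkovi\'c--Vilonen, in the form developed for the Witt vector affine Grassmannian by Zhu \cite{Zh2}, while carrying the extra bookkeeping needed to work over the coefficient ring $\Lambda\in\{\mathbb{Z}_\ell,\mathbb{F}_\ell\}$ in place of a field of characteristic zero. Concretely I would proceed in four steps: (1) upgrade the convolution product to a symmetric monoidal structure on $P_{L^+G}(Gr_G,\Lambda)$; (2) construct an exact, faithful, monoidal fiber functor $F$ from $P_{L^+G}(Gr_G,\Lambda)$ to the category of finitely generated $\Lambda$-modules; (3) conclude by Tannakian reconstruction over $\Lambda$ that $P_{L^+G}(Gr_G,\Lambda)\simeq\textnormal{Rep}_\Lambda(\tilde G)$ for an affine flat $\Lambda$-group scheme $\tilde G=\aut^{\otimes}(F)$; and (4) identify $\tilde G$ with $\hat G_\Lambda$.

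For step (1), recall from \cite{Zh2} (building on Bhatt--Scholze) that $Gr_G$ is an increasing union of perfectly projective, perfectly finitely presented perfect schemes, stratified by the $L^+G$-orbits $Gr_\mu$ indexed by dominant coweights $\mu$, with $\overline{Gr_\mu}=Gr_{\le\mu}$ of dimension $\langle 2\rho,\mu\rangle$. Set $\F\star\G=m_{\ast}(\F\,\TT\,\G)$, formed via the convolution Grassmannian $\GG$ and its contraction map $m$ to $Gr_G$. The substantive point over $\Lambda$ is that $\star$ preserves perversity: this rests on the stratified semismallness of $m$ restricted to $Gr_{\le\mu_1}\TT Gr_{\le\mu_2}$ --- the fiber dimensions being controlled by the same root-theoretic estimates as in equal characteristic --- together with the fact that $\F\,\TT\,\G$ is perverse whenever $\F,\G$ are, which holds because the strata of the twisted product are iterated fibrations with fibers $L^+G$-orbits, so perversity can be tested one factor at a time. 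I would also record here the parity and torsion-freeness statements (the $\ast$- and $!$-restrictions of the $\ic$-sheaves to orbits are concentrated in degrees of a fixed parity, and the relevant (co)stalk cohomology groups are $\Lambda$-free) that make the whole construction compatible with the reduction $\mathbb{Z}_\ell\to\mathbb{F}_\ell$ and that stand in for the decomposition theorem, unavailable with torsion coefficients.

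For step (2), the fiber functor is total cohomology $F=\bigoplus_n H^n(Gr_G,-)$, and its exactness is the crux. Following Mirkovi\'c--Vilonen, introduce the semi-infinite orbits $S_\nu=LU\cdot t^{\nu}$ (for $U\subset G$ a maximal unipotent) and the weight functors $F_\nu(-)=H^{\langle 2\rho,\nu\rangle}_c(S_\nu,(-)|_{S_\nu})$; using Braden's hyperbolic localization theorem, valid in this perfect-scheme setting, one gets $H^\ast(Gr_G,-)\cong\bigoplus_\nu F_\nu$, and each $F_\nu$ is exact because for $\F$ perverse $H^k_c(S_\nu\cap Gr_{\le\mu},\F)$ vanishes unless $k=\langle 2\rho,\nu\rangle$, by the dimension estimate $\dim(S_\nu\cap Gr_\mu)\le\langle\rho,\mu+\nu\rangle$ which holds verbatim for $Gr_G$. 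Hence $F$ is exact; it is faithful since it detects supports; and the refinement $F=\bigoplus_\nu F_\nu$ equips $F$ with an action of $\hat T_\Lambda=\spec\Lambda[X_\ast(T)]$, the prospective maximal torus. A K\"unneth argument applied to the decomposition of $\GG$ by products of semi-infinite orbits gives the monoidal isomorphism $F(\F\star\G)\cong F(\F)\otimes_\Lambda F(\G)$, and the commutativity constraint --- furnished in equal characteristic by the fusion product over a curve, which has no literal counterpart in mixed characteristic --- is supplied by the construction replacing fusion given in \cite{Zh2}, after checking that it respects the $\hat T_\Lambda$-grading and the integral structure. I expect the combination of (a) integral perversity of convolution without the decomposition theorem, (b) $\mathbb{Z}_\ell$-flatness of the objects entering the reconstruction, and (c) transporting the commutativity constraint to the mixed-characteristic, torsion-coefficient setting, to be the principal obstacle.

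For steps (3) and (4): Tannakian reconstruction over the Dedekind ring $\mathbb{Z}_\ell$ (respectively the field $\mathbb{F}_\ell$), using that $F$ is exact, faithful and monoidal and that the $\Lambda$-flat objects of $P_{L^+G}(Gr_G,\Lambda)$ generate, produces an affine flat group scheme $\tilde G=\aut^{\otimes}(F)$ over $\Lambda$ with $P_{L^+G}(Gr_G,\Lambda)\simeq\textnormal{Rep}_\Lambda(\tilde G)$ as monoidal categories. It remains to identify $\tilde G$: it is flat over $\mathbb{Z}_\ell$ with connected reductive fibers, since base change along $\mathbb{Z}_\ell\hookrightarrow\overline{\mathbb{Q}}_\ell$ recovers Zhu's equivalence so that the generic fiber is $\hat G_{\overline{\mathbb{Q}}_\ell}$, while the special fiber is covered by the $\mathbb{F}_\ell$-instance of the same argument; it contains $\hat T_\Lambda$ with $X^\ast(\hat T_\Lambda)=X_\ast(T)$; and its roots and coroots are computed by restricting, via the constant-term (hyperbolic localization) functors, to the rank-one Levi subgroups of $G$, reducing the computation to the cases $SL_2$ and $PGL_2$ where the equivalence with $\textnormal{Rep}_\Lambda$ of the group with dual root datum is checked directly, whence the root datum of $\tilde G$ is dual to that of $G$. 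Since a split reductive group scheme over $\mathbb{Z}_\ell$ is determined by its root datum (Chevalley--Demazure), $\tilde G\cong\hat G_\Lambda$, and the asserted equivalence of monoidal categories follows; the case $\Lambda=\mathbb{F}_\ell$ can alternatively be obtained from the $\mathbb{Z}_\ell$ case by applying $-\otimes^{\mathbb{L}}_{\mathbb{Z}_\ell}\mathbb{F}_\ell$ and tracking the torsion-free combinatorics.
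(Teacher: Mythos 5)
Your steps (1)--(3) --- convolution via stratified semismallness, the fibre functor $\textnormal{H}^*$ with its weight-functor decomposition from hyperbolic localization, and the transfer of the commutativity constraint from the $\bar{\mathbb{Q}}_\ell$-case of \cite{Zh2} --- are essentially the route the paper takes (though note that over $\mathbb{Z}_\ell$ the category is not rigid, so one cannot literally set $\widetilde G=\aut^\otimes(\textnormal{H}^*)$; the paper instead runs the Mirkovi\'c--Vilonen variant of the Tannakian formalism built from the projective representing objects $P_Z(\Lambda)$ and the bialgebra $B(\Lambda)=\varinjlim\hm(A_Z(\Lambda),\Lambda)$, which is the correct way to handle finitely generated but non-flat $\Lambda$-modules). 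Where you genuinely diverge is step (4): you propose to compute the root datum of $\widetilde G$ by reducing, via monoidal constant-term functors, to rank-one Levi subgroups and then checking $SL_2$ and $PGL_2$ by hand. The paper does not do this. Instead it invokes the Prasad--Yu structure theory of quasi-reductive group schemes over a DVR (Theorem $8.2$) to reduce the smoothness/reductivity of $\widetilde G_{\mathbb{Z}_\ell}$ to statements about its special fibre, and then identifies the root datum of $(\widetilde G_\kappa^*)_{\textnormal{red}}$ by a purely combinatorial analysis --- matching irreducible representations, comparing Weyl group orbits via convex-polytope arguments, and controlling root lattices through $\pi_0(Gr_G)\simeq\pi_1(G)$ --- following Mirkovi\'c--Vilonen and Baumann--Riche. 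Your route is legitimate and is closer to what Fargues--Scholze do, but it costs something the paper avoids: you would need to construct, in the Witt-vector setting and with torsion coefficients, a monoidal constant-term functor $P_{L^+G}(Gr_G,\Lambda)\to P_{L^+L}(Gr_L,\Lambda)$ for every Levi $L$ and show it induces a closed immersion $\hat L\hookrightarrow\widetilde G$; neither this paper nor \cite{Zh2} establishes that, whereas the combinatorial argument reuses existing representation-theoretic bookkeeping and needs no new geometry beyond what hyperbolic localization to the maximal torus already gives.
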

We mention that Peter Scholze has announced the same result as part of his work on the local Langlands conjecture for $p$-adic groups using a different approach.
\\

The equal characteristic counterpart of the geometric Satake equivalence  was previously achieved by the works of Drinfeld, Ginzburg, Lusztig, and Mirkovic-Vilonen(cf. \cite{BD}, \cite{Gi}, \cite{Lu1}, \cite{MV2}). Later, Zhu \cite{Zh2} considered the category of $L^+G$-equivariant perverse sheaves in $\bar{\mathbb{Q}}_{\ell}$-coefficient on the mixed characteristic affine Grassmannian $Gr_G$ and established the geometric Satake equivalence in this setting.
\\

In the equal characteristic case, the Beilinson-Drinfeld Grassmannians play a crucial role in establishing the geometric Satake equivalence. In fact, they can be used to construct the monoidal structure of the hypercohomology functor 
$$
\textnormal{H}^{*}: P_{L^+G}(Gr_G,\Lambda)\longrightarrow \textnormal{Mod}_{\Lambda}
$$
and the commutativity constraint in the Satake category by interpreting the convolution product as fusion product. In mixed characteristic, Peter Scholze's theory of diamonds allows him to construct an analogue of the Beilinson-Drinfeld Grassmannian and prove the geometric Satake equivalence in this setting in a similar way as in \cite{MV2}. Our approach of constructing the geometric Satake equivalence makes use of some ideas in \cite{Zh2}. However, our situation is different from \textit{loc.cit} and new difficulties arise. For example, the Satake category in $\bar{\mathbb{Q}}_{\ell}$-coefficient is semisimple; while, in our case, the semisimplicity of the Satake category fails. In addition, the monoidal structure of the hypercohomology functor was constructed by studying the equivariant cohomology of (convolutions of) irreducible objects in the Satake category. Nevertheless, in In our situation, the equivariant cohomology may have torsion. Thus the method in \cite{Zh2} doesn't apply to our case directly. To deal with these difficulties, we give a new approach to construct the the monoidal structure and the commutativity constraint in the Satake category and we briefly discuss our strategy as follows.
\\

We consider the hypercohomology functor $\textnormal{H}^*$ and prove that it is $\Lambda$-linear, exact, and faithful. Then we study the $\mathbb{G}_m$-action (in fact, we consider the action of the perfection of the group scheme $\mathbb{G}_m$)  on the convolution Grassmannian $Gr_G\tilde{\times}Gr_G$.  Applying the Mirkovi\'{c}-Vilonen theory for mixed characteristic affine Grassmannians established in \cite{Zh2} and Braden's hyperbolic localization functor, we can decompose the hypercohomology functor on $Gr_G\tilde{\times}Gr_G$ into a direct sum of compactly supported cohomologies. Each direct summand can be further realized as the tensor product of two compactly supported cohomologies on $Gr_G$ by the K\"{u}nneth formula. Putting these together gives the desired monoidal structure on $\textnormal{H}^{*}$ which is compatible with the monoidal structure obtained in \cite{Zh2}.
\\

 We further notice that as in the cases discussed in \cite{MV2} and \cite{Zh2}, the hypercohomology functor is representable by projective objects when restricting to full subcategories of the Satake category. In particular, these objects are isomorphic to the projective objects in the case studied in \cite{Zh2} after base change. This allows us to directly construct a $\Lambda$-algebra $B(\Lambda)$ as in \cite{MV2}. The compatibility of the monoidal structure of $
 H^*$ and the projective objects constructed in our case with those in \cite{Zh2} enable us to inherit a commutative multiplication map in $B(\Lambda)$ from that of $B(\bar{\mathbb{Q}}_{\ell})$ which is given by the commutativity constraint constructed in \textit{loc.cit}. In this way we equip $B(\Lambda)$ with the structure of a bialgebra in the sense of \cite{DM}. The general Tannakian construction (cf.\cite{MV2}) yields an equivalence of tensor categories 
 $$
P_{L^+G}(Gr_G,\Lambda)\simeq \textnormal{Rep}_{\Lambda}(\widetilde{G}_{\Lambda}),
$$
where $\widetilde{G}_{\Lambda}:=\textnormal{Spec}(B_{\Lambda})$ is an affine flat group scheme and $\textnormal{Rep}_{\Lambda}(\widetilde{G}_{\Lambda})$ denotes the category of $\Lambda$-representations of $\widetilde{G}_{\Lambda}$ which are finitely generated over $\Lambda$. 
We give two approaches identifying $\widetilde{G}_{\Lambda}$ with $\hat{G}_{\Lambda}$ and conclude the proof of the theorem.
\subsection{Organization of the paper}
We briefly discuss the organization of this paper. In \S $2$, we give a quick review the of construction of affine Grassmannians in mixed characteristic. Section $3$ is devoted to the construction of the Satake category and its monoidal structure. We study the hyperbolic localization functor on the affine Grassmannian and define the weight functors in \S $4$. In \S $5$, we prove that the weight functors are representable and study the structure of the projective objects. In \S 6, we show the hypercohomology functor $\textnormal{H}^{*}$ can be endowed with a monoidal structure. We apply the Tannakian formalism and construct the commutativity constraint in \S $7$. In \S $8$, we discuss the identification of group schemes and conclude the main theorem of this paper. 

\subsection{Notations}
In this section, we fix a few notations for later use.
\\

We fix an algebraically closed field $k$ of characteristic $p> 0$. For any $k$-algebra $R$, its ring of Witt vectors is denoted by 
$$
W(R)=\{(r_0,r_1,\cdots)\vert r_i\in R\}.
$$
We denote by $W_h(R)$ the ring of truncated Witt vectors of length $h$. For perfect $k$-algebra $R$,  we know that $W_h(R) = W(R)/p^hW(R)$.
\\

Let $\mathcal{O}_0 = W(k)$, and $F_0 = W(k)[1/p]$. We denote by $F$ a totally ramified
finite extension of $F_0$ and $\mathcal{O}$ the ring of integers of $F$. We also fix a uniformizer $\varpi$ in $\mathcal{O}$. For any $k$-algebra $R$, we use notations
$$
W_{\mathcal{O}}(R)=W(R)\otimes_{W(k)} \mathcal{O},\text{ and }W_{\mathcal{O},n}(R)=W(R)\otimes_{W(k)} \mathcal{O}/\varpi^n.
$$
We define the formal unit disk and formal punctured unite disk to be
$$
D_{F,R}:=\textnormal{Spec}W_{\mathcal{O}}(R),\text{ and } D_{F,R}^{\times}:=\textnormal{Spec}W_{\mathcal{O}}(R)[1/p]
$$
respectively.
We will assume $G$ to be a smooth affine group scheme over $\mathcal{O}$
with connected geometric fibers. Also, we denote by $\mathcal{E}_0$ the trivial $G$-torsor.
\\

In the case $G$ is a split reductive group, we will choose a Borel subgroup $B\subset G$ over $\mathcal{O}$ and a split maximal torus $T\subset B$. For the choice of $(B,T)$ we denote by $U \subset B$ the unipotent radical of $B$. We also let $\bar{T}\subset \bar{B}\subset \bar{G}$ be the fibres of $T\subset B\subset G$ at $\mathcal{O}/\varpi$, respectively. We write $\Delta(G,T)$ for the root system of $G$ with respect to $T$. We denote by $\Delta_+(G,B,T)$ the subset of positive roots determined by $B$ and $\Delta_s(G,B,T)$ the subset of simple root. Similarly, we let $\Delta^{\vee}(G,T)$ denote the coroot system of $G$ respect to $T$. We write $\Delta^{\vee}_+(G,B,T)$ for the subset of positive coroots determined by $B$ and $\Delta^{\vee}_s(G,B,T)$ the subset of simple coroot.
\\

Let $\mathbb{X}_{\bullet}$ denote the coweight lattice of $T$ and $\mathbb{X}^{\bullet}$ the weight lattice. Let $\mathbb{X}_{\bullet}^{+}$ denote the
semi-group of dominant coweights with respect to the chosen Borel. Let $2\rho\in \mathbb{X}^{\bullet}$ be the sum of all positive roots. Define the partial order $"\leq"$ on $\mathbb{X}_{\bullet}$ be such that $\lambda\leq \mu$, if and only if  $\mu-\lambda$  equals a non-negative integral linear combination of positive coroots.
For any $\mu\in\mathbb{X}_{\bullet}$, denote $\varpi^{\mu}$ by the image of $\mu$ under the composition of maps
$$
\mathbb{G}_m\rightarrow T\subset G.
$$
The dual group of $G$ (over a field of characteristic zero) is denoted by $\hat{G}$. We equip it with a dual Borel $\hat{B}$ and a maximal torus $\hat{T}$ dual to $T$.
\\

We will write $\mathbb{G}_m^{p^{-\infty}}$, the perfection of the group scheme $\mathbb{G}_m$, simply as $\mathbb{G}_m$ for convenience. 
\subsection{Acknowledgements} I would like to thank Xinwen Zhu for many helpful discussions and comments. I also thank Kari Vilonen for discussions on the geometric Satake equivalence in the equal characteristic case.

\section{Mixed Characteristic Affine Grassmannians}
In this section, we briefly review the construction of  affine Grassmannians in mixed characteristic and summarize their geometric properties which will be used later following \cite{Zh2}. Most properties appeared in this section have analogies in the equal characteristic setting and we refer to \cite{MV2} for a detailed discussion.
\\

We start this section by defining $p$-adic loop groups that are similar to their equal characteristic counterparts. Let $\mathcal{X}$ be a finite type $\mathcal{O}$-scheme. We consider the
following two presheaves on the category of affine $k$-schemes defined as follows
$$
L^+_p \mathcal{X}(R): = \mathcal{X}(W_{\mathcal{O}}(R)), \text{ and } L^h_p\mathcal{X}(R):= X(W_{\mathcal{O},h}(R)),
$$
which are represented by schemes over $k$. Their perfections are denoted by
$$
L^{+}\mathcal{X}:=(L^+_p \mathcal{X})^{p^{-\infty}}, \text{ and } L^h\mathcal{X}:=(L^h_p\mathcal{X})^{p^{-\infty}}
$$
respectively, and we call them p-adic jet spaces. 
\\

Let $X$ be an affine scheme over $F$. We define the $p$-adic loop space $LX$ of $X$ as the perfect
space by assigning a perfect $k$-algebra $R$ to the set
$$
LX(R) = X(W_{\mathcal{O}}(R)[1/p]).
$$
Now, let $\mathcal{X} = G$ be a smooth affine group scheme over $\mathcal{O}$. We write $G^{(0)} = G$ and define the $h$-th congruence group scheme of $G$ over $\mathcal{O}$ , denoted by $G^{(h)}$, as the dilatation of $G^{(h-1)}$ along the unit. The group $L^+G^{(h)}$ can be identified with $\textnormal{ker}(L^{+}G\rightarrow L^hG)$ via the natural map $G^{(h)}\rightarrow G$. Then $L^{+}G$ acts on $LG$ by multiplication on the right. We define the affine Grassmannian $Gr_G$ of $G$ to be the perfect space
$$
Gr_G := [LG/L^+G]
$$
on the category of perfect $k$-algebras.
\\

In the work of Bhatt-Scholze \cite{BS}, the functor $Gr_{G}$ is proved to be representable by an inductive limit of perfections
of projective varieties. 
\\

We recall the following proposition in \cite{Zh2} which will be used later.

\begin{proposition}
Let $\rho:G\rightarrow GL_n$ be a linear representation such that $GL_n/G$ is quasi-affine. Then $\rho$ induces a locally closed embedding $Gr_G\rightarrow Gr_{GL_n}$. If in addition, $GL_n/G$ is affine, then $Gr_G\rightarrow Gr_{GL_n}$ is in fact a closed embedding. 
\end{proposition}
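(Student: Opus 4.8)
The plan is to reduce the statement to a known functoriality property of affine Grassmannians together with descent along the projection $LG \to Gr_G$. First I would recall that for any homomorphism of smooth affine group schemes $H \to G$ over $\mathcal{O}$ one gets a natural map $Gr_H \to Gr_G$ induced on the level of loop groups by $LH \to LG$, compatible with the $L^+$-actions; applying this to $\rho: G \to GL_n$ produces the map $Gr_G \to Gr_{GL_n}$ in question. The content is entirely in showing this map is a locally closed (resp.\ closed) embedding under the stated hypothesis on the quotient $GL_n/G$.

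The key step is to analyze the fiber product $LGL_n \times_{Gr_{GL_n}} Gr_G$ and identify it, via the defining presentation $Gr_{GL_n} = [LGL_n/L^+GL_n]$, with $(GL_n/G)$-valued points of the relevant rings. Concretely, an $R$-point of $Gr_{GL_n}$ landing in the image of $Gr_G$ corresponds to an $LGL_n$-torsor over $R$ that, after the base change $W_{\mathcal O}(R) \to W_{\mathcal O}(R)[1/p]$ along $D_{F,R}^\times$, admits a reduction to $G$. Since $GL_n/G$ is quasi-affine, I would invoke the valuative/affineness criterion: a reduction of structure group from $GL_n$ to $G$ of a $GL_n$-torsor over $D_{F,R}$ is the same as a section over $D_{F,R}$ of the associated $(GL_n/G)$-bundle, and the condition that such a section exists over the generic locus $D_{F,R}^\times$ and extends — or is constrained — over all of $D_{F,R}$ is representable by a locally closed (resp.\ closed, when $GL_n/G$ is affine) subfunctor of $Gr_{GL_n}$. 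The crucial input is that $Gr_{GL_n}$ is an increasing union of perfections of projective varieties (Bhatt--Scholze), so it suffices to check the claim on each bounded piece $Gr_{GL_n,\le N}$, where quasi-affineness of $GL_n/G$ lets one separate the locus where the generic reduction is integral from the rest; affineness upgrades "locally closed" to "closed" because then the extension of the section, when it exists, is automatic and the condition becomes closed.

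Concretely I would proceed as follows: (1) construct the map $Gr_G \to Gr_{GL_n}$ and check it is a monomorphism of perfect spaces, using that $G \to GL_n$ is a closed immersion and that $LG \to LGL_n$, $L^+G \to L^+GL_n$ are injective on points; (2) show the map is ind-(locally of finite presentation) and ind-proper onto its image by bounding: the image of $Gr_{G,\le \mu}$ lands in some $Gr_{GL_n,\le N}$; (3) over each bounded piece, realize the image as the locus of torsors whose generic $G$-reduction exists, and invoke that for $\mathcal X = GL_n/G$ quasi-affine the condition "$L^+GL_n$-orbit point lifts to an $\mathcal X(W_{\mathcal O}(R)[1/p])$-point that extends/restricts appropriately" cuts out a locally closed subscheme — this is the perfection of the analogous classical statement, and one can either cite \cite{Zh2} directly or reprove it by spreading out; (4) in the affine case, note a section of an affine bundle over $D_{F,R}$ is determined by and extends from the punctured disk under the relevant normality, making the subfunctor closed. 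The main obstacle I anticipate is step (3): making precise the descent from $LGL_n$ to the quotient stack $Gr_{GL_n}$ and controlling how the quasi-affine (vs.\ affine) geometry of $GL_n/G$ interacts with the passage from $W_{\mathcal O}(R)[1/p]$ back to $W_{\mathcal O}(R)$ — i.e.\ the mixed-characteristic analogue of "a map from the punctured disk to a quasi-affine scheme extends over the puncture iff it is bounded." Since this is quoted verbatim from \cite{Zh2}, I would ultimately lean on that reference for the heart of the argument and only spell out the (minor) compatibilities needed in our setting.
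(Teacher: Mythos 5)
The paper gives no proof of this proposition: it is stated as a recollection from \cite{Zh2} and cited without argument, so there is no internal proof to compare your proposal against. Your high-level plan --- describe the image of $Gr_G$ in $Gr_{GL_n}$ as the locus where the canonical generic $G$-reduction of the tautological $GL_n$-torsor (determined by the trivialization over $D^\times_{F,R}$ and the closed immersion $G\hookrightarrow GL_n$) extends over $D_{F,R}$, rephrased as extendability of a section of the associated $(GL_n/G)$-bundle --- is indeed the standard route, and your decision to lean on \cite{Zh2} for the technical core is consistent with what the paper itself does.

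There is, however, a genuine error in your step (4) and in the earlier sentence anticipating it. You assert that affineness makes the extension of the section ``automatic,'' and that a section of an affine bundle ``extends from the punctured disk under the relevant normality.'' Both claims are false; were they true, $Gr_G\to Gr_{GL_n}$ would be surjective. The puncture $D^\times_{F,R}\subset D_{F,R}$ sits along the codimension-one divisor $\varpi=0$, so no Hartogs-type extension is available. What affineness actually buys is that the extension \emph{condition} is closed. Writing $GL_n/G=\textnormal{Spec}\,A$ with $A$ of finite presentation over $\mathcal O$, the generic section is an $\mathcal O$-algebra map $A\to W_{\mathcal O}(R)[1/p]$, and requiring that it factor through $W_{\mathcal O}(R)\subset W_{\mathcal O}(R)[1/p]$ amounts to the vanishing of finitely many Witt components of $R$ after clearing denominators on a finite generating set of $A$ --- a closed condition on $\textnormal{Spec}\,R$, not an automatic one. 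In the quasi-affine case, embed $GL_n/G$ as a $GL_n$-stable open subscheme of an affine $X$; the closed condition is that the section of the associated $X$-bundle extends, and the additional \emph{open} condition is that the extended section lands in the open sub-bundle associated to $GL_n/G\subset X$. That open condition, not a separation of ``the integral locus from the rest,'' is where ``locally'' in ``locally closed'' comes from, and its absence in the affine case is why one gets a closed embedding. You should replace your step (4) with this.
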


Explicitly, the affine Grassmannian $Gr_G$ can be described as assigning a perfect $k$-algebra $R$ the set of
pairs $(P, \phi)$, where $P$ is an $L^+G$-torsor over $\textnormal{Spec} R$ and $\phi: P \rightarrow LG$ is an $L^+G$-equivariant morphism. It is clear from the definition that $LG\rightarrow Gr_G$ is an $L^{+}G$-torsor and $L^{+}G$ naturally acts on $Gr_G$, then we can form the twisted product which we also call the convolution product in the current setting 
$$
Gr_G\tilde{\times}Gr_{G}:=LG\times^{L^{+}G}Gr_G:=[LG\times Gr_G/L^{+}G],
$$
where $L^{+}G$ acts on $LG\times Gr_G$ diagonally as $g^{+}\cdot (g_1,g_2):=(g_1(g^{+})^{-1},g^{+}g_2)$. 
\\

As in the equal characteristic case, the affine Grassmannians can be interpreted as the moduli stack of $G$-torsors on the formal unit disk with trivialization away from the origin. More precisely, for each perfect $k$-algebra $R$, 

\begin{equation*}
Gr_G(R)=\left\{(\mathcal{E},\phi)\bigg|
\begin{aligned}
&	\mathcal{E}\rightarrow D_{F,R} \text{ is a } G\text{-torsor, and } \\
& \phi:\mathcal{E}\vert_{D_{F,R}^{*}}\simeq \mathcal{E}_{0}\vert_{D_{F,R}^{*}} \\
\end{aligned}
\right\}.
\end{equation*}
\\

Let $\mathcal{E}_1$ and $\mathcal{E}_2$ be two $G$-torsors over $D_{F,R}$, and let  $\beta:\mathcal{E}_1\vert_{D_{F,R}^{*}}\simeq \mathcal{E}_2\vert_{D_{F,R}^{*}}$
be an isomorphism. One can define the relative position $\textnormal{Inv}(\beta)$ of $\beta$ as an element in $\mathbb{X}^{+}_{\bullet}$ as in \cite{Zh2}.
\begin{definition}
For each $\mu\in \mathbb{X}_{\bullet}^{+}$, we define 
\begin{itemize}
    \item [(1)] the (spherical) Schubert variety
    $$
Gr_{\leq\mu}:=\{(\mathcal{E},\beta)\in Gr_G\vert \textnormal{Inv}(\beta)\leq \mu\},
$$ 
    \item[(2)] the Schubert cell
    $$
Gr_{\mu}:=\{(\mathcal{E},\beta)\in Gr_G\vert \textnormal{Inv}(\beta)= \mu\}.
$$ 
\end{itemize} to

\end{definition}

\begin{proposition}
\begin{itemize}
    \item [(1)]Let $\mu\in \mathbb{X}^{+}_{\bullet}$, and $\varpi^{\mu}\in Gr_G$  be the corresponding point in the affine Grassmannian. Then the map
$$ i_{\mu}:L^{+}G/(L^{+}G\cap \varpi^{\mu}L^{+}G\varpi^{-\mu})\longrightarrow LG/L^{+}G,\text{ such that } g\longmapsto g\varpi^{\mu}$$
induces an isomorphism 
$$
L^{+}G/(L^{+}G\cap \varpi^{\mu}L^{+}G\varpi^{-\mu})\simeq Gr_{\mu}.
$$

     \item[(2)]$Gr_{\mu}$ is the perfection of a quasi-projective smooth variety of dimension $(2\rho, \mu)$.
     \item[(3)] $Gr_{\leq\mu}$ is the Zariski closure of $Gr_{\mu}$ in $Gr_G$ and therefore is perfectly proper of dimension
$(2\rho,\mu)$.
\end{itemize}
\end{proposition}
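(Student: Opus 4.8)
The plan is to deduce all three parts from the description of the $L^{+}G$-orbits on $Gr_G$, after passing to a finite truncation level so as to work only with perfections of finite-type $k$-schemes. For (1), note first that $g\mapsto g\varpi^{\mu}$ realizes $Gr_{\mu}$ as exactly the $L^{+}G$-orbit of the point $\varpi^{\mu}\in Gr_G$: one has $\textnormal{Inv}(g\varpi^{\mu})=\mu$ for all $g\in L^{+}G$, and, conversely, the Cartan decomposition of $G$ over the valuation ring $W_{\mathcal{O}}(R)$ (valid for $R$ a field, and in general after a faithfully flat cover) shows that every point of relative position $\mu$ lies in this orbit. A direct computation with the moduli interpretation of $Gr_G$ identifies the scheme-theoretic stabilizer of $\varpi^{\mu}$ in $L^{+}G$ with $L^{+}G\cap\varpi^{\mu}L^{+}G\varpi^{-\mu}$. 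To upgrade the resulting bijection $i_{\mu}$ to an isomorphism of perfect spaces, I would use that $Gr_{\leq\mu}$ is of finite type — it is one of the perfections of projective varieties exhausting $Gr_G$ in the Bhatt--Scholze presentation — so that the $L^{+}G$-action on $Gr_{\leq\mu}$ factors through $L^{h}G$ for $h\gg 0$ (cf. \cite{Zh2}); then $i_{\mu}$ becomes the orbit map of an action of the perfection of a finite-type group scheme, hence a locally closed immersion with image $Gr_{\mu}$, and a fortiori an isomorphism onto $Gr_{\mu}$.

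For (2), I would compute $\dim Gr_{\mu}=\dim\bigl(L^{+}G/(L^{+}G\cap\varpi^{\mu}L^{+}G\varpi^{-\mu})\bigr)$ by the standard argument with root subgroups (cf. \cite{MV2}). Working at a fixed finite level and using the big open cell of $G$ — the product of $T$ with the root subgroups $U_{\alpha}$ — the quotient is covered, up to perfection, by $\prod_{\alpha}L^{+}U_{\alpha}$, and conjugation by $\varpi^{\mu}$ scales the additive coordinate of $U_{\alpha}$ by $\varpi^{\gen{\alpha,\mu}}$, so it carries $L^{+}U_{\alpha}$ to a subgroup of $L^{+}U_{\alpha}$ of codimension $\gen{\alpha,\mu}$ when $\gen{\alpha,\mu}\geq 0$. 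Hence the root $\alpha$ contributes a factor of dimension $\max(\gen{\alpha,\mu},0)$, which equals $\gen{\alpha,\mu}$ for $\alpha>0$ and $0$ for $\alpha<0$ since $\mu$ is dominant; summing gives $\sum_{\alpha>0}\gen{\alpha,\mu}=\gen{2\rho,\mu}$. Smoothness is then automatic: $Gr_{\mu}$ is homogeneous under the perfection of a smooth finite-type $k$-group scheme, so it is the perfection of a smooth quasi-projective variety of dimension $\gen{2\rho,\mu}$.

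For (3), I would first check that the condition $\textnormal{Inv}(\beta)\leq\mu$ is closed: via the moduli description and the embedding $Gr_G\hookrightarrow Gr_{GL_n}$ recalled above, it amounts to a uniform bound on the poles of the trivialization, which cuts out a closed subspace of $Gr_G$. Thus $Gr_{\leq\mu}$ is a closed subspace of $Gr_G$ containing $Gr_{\mu}$. The usual closure relations — transported from $Gr_{GL_n}$ along the locally closed embedding of the proposition recalled above, or checked directly at finite level — give $Gr_{\leq\mu}\setminus Gr_{\mu}=\bigsqcup_{\nu<\mu}Gr_{\nu}$ with $Gr_{\mu}$ open and dense in $Gr_{\leq\mu}$; hence $Gr_{\leq\mu}=\overline{Gr_{\mu}}$ and it has the same dimension $\gen{2\rho,\mu}$ as its dense open subspace. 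Finally, for perfect properness I would realize $Gr_{\leq\mu}$ as a closed subspace of a bounded piece of $Gr_{GL_n}$ via the proposition recalled above; by the Bhatt--Scholze result such a bounded piece is the perfection of a projective $k$-variety, so the closed subspace $Gr_{\leq\mu}$ is one too, and in particular perfectly proper.

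I expect the main obstacle to be part (1): to justify that $i_{\mu}$ is an isomorphism of perfect spaces — not merely a bijection on $R$-points — and that the stabilizer has the correct scheme structure. This is exactly where the Witt-vector setting departs from the equal-characteristic picture, and it relies essentially on the finiteness and representability statements recalled from \cite{Zh2} and \cite{BS} in order to reduce the analysis of the orbit map to an action of a finite-dimensional (perfection of a) group scheme on a finite-type perfect scheme.
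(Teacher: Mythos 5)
The paper does not prove this proposition; it is simply recalled from Zhu's \emph{Affine Grassmannians and the geometric Satake in mixed characteristic} (\cite{Zh2}), as stated at the start of \S 2. Your sketch is a sound reconstruction of the argument found there: identifying $Gr_{\mu}$ as the $L^{+}G$-orbit of $\varpi^{\mu}$ via Cartan decomposition, reducing to a finite-level quotient $L^{h}G$ to upgrade the orbit map to an isomorphism of perfect schemes, computing the dimension via root subgroups and the action of conjugation by $\varpi^{\mu}$ on each $L^{+}U_{\alpha}$, and deducing perfect properness of $Gr_{\leq\mu}$ from its realization inside a bounded piece of $Gr_{GL_n}$ together with Bhatt--Scholze representability. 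One place where your exposition is a bit compressed is part (3): the closedness of the condition $\textnormal{Inv}(\beta)\leq\mu$ and the assertion that all $Gr_{\nu}$ with $\nu<\mu$ lie in the closure of $Gr_{\mu}$ are logically intertwined, and the closure relations for general $G$ do not follow merely by restriction from $Gr_{GL_n}$ — Zhu proves them by a degeneration argument at finite level (analogous to \cite[Prop.~3.1]{MV2}) together with the Cartan-type analysis you allude to. So while your high-level plan is correct and close to the original, the phrase ``transported from $Gr_{GL_n}$'' would need to be replaced by the explicit degeneration argument in order for the closure relations to be actually established rather than assumed.
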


The convolution Grassmannian $Gr_G\tilde{\times}Gr_{G}$ admits a moduli interpretation as follows
\begin{equation*}
Gr_G\tilde{\times}Gr_{G}(R)=\left\{(\mathcal{E}_1,\mathcal{E}_2,\beta_1,\beta_2)\bigg|
\begin{aligned}
&	\mathcal{E}_1,\mathcal{E}_2 \text{ are } G-\text{torsors on }D_{F,R}, \text{and} \\
& \beta_1:\mathcal{E}_1\vert_{D_{F,R}^*}\simeq \mathcal{E}_0\vert_{D_{F,R}^*}, \beta_2:\mathcal{E}_2\vert_{D_{F,R}^*}\simeq \mathcal{E}_1\vert_{D_{F,R}^*} \\
\end{aligned}
\right\}.
\end{equation*}
Via this interpretation, we define the convolution product as in the equal characteristic case 
$$
m:Gr_G\tilde{\times}Gr_{G}\longrightarrow Gr_G, 
$$
such that 
$$
(\mathcal{E}_1,\mathcal{E}_2,\beta_1,\beta_2)\longmapsto (\mathcal{E}_2,\beta_1\beta_2).
$$
Note there is also the natural projection morphism 
$$
pr_1:Gr_G\tilde{\times}Gr_{G}\longrightarrow Gr_G,
$$
such that 
$$
(\mathcal{E}_1,\mathcal{E}_2,\beta_1,\beta_2)\longmapsto (\mathcal{E}_1,\beta_1).
$$
It is clear to see $(pr_1,m):Gr_G\tilde{\times}Gr_{G}\simeq Gr_G\times Gr_G$ is an isomorphism.
\\

One can define the $n$-fold convolution Grassmannian $Gr_G\tilde{\times}\cdots\tilde{\times}Gr_G$ in a similar manner as follows
\begin{equation*}
Gr_G\tilde{\times}\cdots\tilde{\times}Gr_G:=\left\{(\mathcal{E}_i,\beta_i)\bigg|
\begin{aligned}
&	\mathcal{E}_i\text{ is a }G\text{-torsor over }D_{F,R}, \text{and} \\
& \beta_i:\mathcal{E}_i\vert_{D_{F,R}^*}\simeq \mathcal{E}_{i-1}\vert_{D_{F,R}^*} \\
\end{aligned}
\right\}.
\end{equation*}
We define the morphism 
$$
m_i:Gr_G\tilde{\times}\cdots\tilde{\times}Gr_G\longrightarrow Gr_G
$$
such that 
$$
(\mathcal{E}_i,\beta_i)\longmapsto (\mathcal{E}_i,\beta_1\beta_2\cdots \beta_i:\mathcal{E}_i\vert_{D_{F,R}^*}\simeq \mathcal{E}_0\vert_{D_{F,R}^*}).
$$
As for the $2$-fold convolution Grassmannian, we have an isomorphism
$$
(m_1,m_2,\cdots,m_n):Gr_G\tilde{\times}\cdots\tilde{\times}Gr_G\simeq Gr_G\times\cdots\times Gr_G.
$$
We also call the map $m_n$ the convolution map.
\\

Given a sequence of dominant coweights $\mu_{\bullet}=(\mu_1,\cdots,\mu_n)$ of $G$, we define the following closed subspace of $Gr_G\tilde{\times}\cdots\tilde{\times}Gr_G $,
$$
Gr_{\leq\mu_{\bullet}}:=Gr_{\leq\mu_1}\tilde{\times}\cdots\tilde{\times} Gr_{\leq\mu_n}:=\{(\mathcal{E}_i,\beta_i)\in Gr_G\tilde{\times}\cdots\tilde{\times}Gr_G\vert \textnormal{Inv}(\beta_i)\leq\mu_i\}.
$$

As in \cite{Zh2}, we let $|\mu_{\bullet}|:=\sum\mu_i$. Then the convolution map induces the following morphism
$$
m:Gr_{\leq\mu_{\bullet}}\longrightarrow Gr_{\leq |\mu_{\bullet}|},
$$
such that
$$
(\mathcal{E}_i,\beta_i)\longmapsto (\mathcal{E}_n,\beta_1\cdots\beta_n).
$$
Replace $Gr_{\leq\mu_i}$ by $Gr_{\mu_i}$, we can similarly define $Gr_{\mu_{\bullet}}:=Gr_{\mu_1}\tilde{\times}\cdots\tilde{\times} Gr_{\mu_n}$. By Proposition $2.3$, we have
\begin{equation}
Gr_{\leq\mu_{\bullet}}=\cup_{\mu_{\bullet}'\leq \mu_{\bullet}}Gr_{\mu_{\bullet}'},
\end{equation}
where $\mu_{\bullet}'\leq \mu_{\bullet}$ means $\mu_i'\leq\mu_i$ for each $i$. This gives a stratification of $Gr_{\mu_{\bullet}}$
\\

\section{The Satake Category}

In this section, we first define the Satake category $\textnormal{Sat}_{G,\Lambda}$ as the category of $L^{+}G$-equivariant $\Lambda$-coefficient perverse sheaves on $Gr_G$. Then we define the convolution map which enables us to equip the Satake category with a monoidal structure. 
\\

Recall that $Gr_G$ can be written as a limit of $L^+G$-invariant closed subsets 
$$
Gr_G=\lim_{\rightarrow\mu}Gr_{\leq\mu}
$$
For each $\mu\in \mathbb{X}_{\bullet}^{+}$, the Schubert variety is perfectly proper, and the action of $L^+G$ on $Gr_{\leq\mu}$ factors through some $L^hG$ which is perfectly of finite type. Therefore, it makes sense to define the category of $L^{+}G$-equivariant perverse sheaves on $Gr_{\leq\mu}$ as in \cite[\S{2.1.1}]{Zh2}\footnote{It can be defined by a different but equivalent approach by considering the equivariant derived category. For details, see \cite{BL1}.}, which we denote by $P_{L^+G}(Gr_{\leq\mu},\Lambda)$. Then we define the Satake category as 
$$
P_{L^+G}(Gr_{G},\Lambda):=\lim_{\rightarrow\mu}P_{L^+G}(Gr_{\leq\mu},\Lambda).
$$
We denote by $\textnormal{IC}_{\mu}$ for each $\mu\in \mathbb{X}_{\bullet}^{+}$ the intersection cohomology sheaf on $Gr_{\leq\mu}$. Its restriction to each open strata $Gr_{\nu}$ is constant and in particular, $\textnormal{IC}_{\mu}\vert_{Gr_{\mu}}\simeq \Lambda[(2\rho,\mu)]$.
\\

With the above preparation, we can define the monoidal structure in $\textnormal{Sat}_{G,\Lambda}$ by Lusztig's convolution of sheaves as in the equal characteristic counterpart.
Consider the following diagram 
$$
Gr_G\times Gr_G\stackrel{p}{\longleftarrow}LG\times Gr_G\stackrel{q}{\longrightarrow} Gr_G\tilde{\times} Gr_G\stackrel{m}{\longrightarrow} Gr_G,
$$
where $p$ and $q$ are projection maps. We define for any $\mathcal{A}_1,\mathcal{A}_2\in P_{L^+G}(Gr_G,\Lambda)$, 
$$\mathcal{A}\star \mathcal{A}_2:=Rm_!(\mathcal{A}_1\tilde{\boxtimes}\mathcal{A}_2),$$
where $\mathcal{A}_1\tilde{\boxtimes}\mathcal{A}_2\in P_{L^+G}(Gr_G\tilde{\times} Gr_G,\Lambda)$ is the unique sheaf such that 
$$
q^*(\mathcal{A}_1\tilde{\boxtimes}\mathcal{A}_2)\simeq p^*({^{p}\textnormal{H}^{0}}(\mathcal{A}_1\boxtimes\mathcal{A}_2)).
$$ 
Unlike the construction in $P_{L^+G}(Gr_G,\bar{\mathbb{Q}}_{\ell})$, we emphasize that taking the $0$-th perverse cohomology in the above definition is necessary. This is because when we work with $\mathbb{Z}_{\ell}$-sheaves, the sheaf $\mathcal{A}_1\boxtimes\mathcal{A}_2$ may not be perverse. In fact, $\mathcal{A}_1\boxtimes\mathcal{A}_2$ is perverse if one of $\textnormal{H}^{*}(\mathcal{A}_i)$ is a flat $\mathbb{Z}_{\ell}$-module. For more details, please refer to \cite[Lemma 4.1]{MV2}.  
\begin{proposition}
For any $\mathcal{A}_1,\mathcal{A}_2\in P_{L^+G}(Gr_G,\Lambda)$, the convolution product $\mathcal{A}_1\star\mathcal{A}_2$ is perverse.
\end{proposition}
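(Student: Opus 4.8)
The plan is to reduce the perversity of $\mathcal{A}_1 \star \mathcal{A}_2 = Rm_!(\mathcal{A}_1 \tilde{\boxtimes} \mathcal{A}_2)$ to the semismallness of the convolution map $m$, exactly as in the equal characteristic case, but taking care that the coefficient ring $\Lambda$ need not be a field of characteristic zero. First I would reduce to the case where $\mathcal{A}_1 = \textnormal{IC}_{\mu_1}$ and $\mathcal{A}_2 = \textnormal{IC}_{\mu_2}$ for dominant coweights $\mu_1, \mu_2$: since every object of $P_{L^+G}(Gr_G,\Lambda)$ is supported on some $Gr_{\leq\mu}$ and the convolution is exact in each variable (or at least compatible with the perverse filtration), and since $\mathcal{A}_i$ admits a finite filtration whose graded pieces are subquotients of the $\textnormal{IC}_\mu$'s, it suffices to treat $\tilde{\boxtimes}$ of two $\textnormal{IC}$-sheaves. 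By the support condition, $m$ restricts to a map $m \colon Gr_{\leq\mu_1} \tilde{\times} Gr_{\leq\mu_2} \to Gr_{\leq|\mu_\bullet|}$, and $\textnormal{IC}_{\mu_1} \tilde{\boxtimes} \textnormal{IC}_{\mu_2}$ is (up to shift) the $\textnormal{IC}$-sheaf of $Gr_{\leq\mu_1} \tilde{\times} Gr_{\leq\mu_2}$, using that $q$ is a smooth fibration and $p^*({}^p\textnormal{H}^0(\textnormal{IC}_{\mu_1} \boxtimes \textnormal{IC}_{\mu_2}))$ is, after the relevant shift, the constant perverse sheaf there — this is where one needs to note that the external product of the two IC-sheaves, after applying ${}^p\textnormal{H}^0$, is still the intersection cohomology complex of $Gr_{\leq\mu_1} \times Gr_{\leq\mu_2}$, since IC-sheaves with $\mathbb{Z}_\ell$-coefficients satisfy a Künneth-type statement for the top perverse cohomology.

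Next I would establish that $m$ is semismall. The stratification of the target is by Schubert cells $Gr_\nu$ with $\nu \leq |\mu_\bullet|$, and the stratification of the source is by the $Gr_{\mu_\bullet'}$ with $\mu_\bullet' \leq \mu_\bullet$ from equation $(2.1)$. Over $Gr_\nu$, the fibre of $m$ is, by the moduli interpretation, identified with a convolution Mirkovic--Vilonen–type variety whose dimension is bounded by $\frac{1}{2}((2\rho,|\mu_\bullet|) - (2\rho,\nu)) = \frac{1}{2}(\dim Gr_{\leq|\mu_\bullet|} - \dim Gr_\nu)$; this is precisely the dimension estimate proved in \cite{Zh2} for the mixed-characteristic affine Grassmannian (the same estimate underlying the Mirkovic--Vilonen theory cited in the introduction), and it is the semismallness inequality for $m$. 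Granting semismallness, a semismall proper map applied via $Rm_! = Rm_*$ to an IC-sheaf (more precisely to the constant sheaf shifted by the dimension on the source, which is what $\textnormal{IC}_{\mu_1}\tilde{\boxtimes}\textnormal{IC}_{\mu_2}$ restricts to on the smooth open stratum) produces a complex that is perverse: the standard argument bounding $\mathcal{H}^i(Rm_*(-))$ by counting dimensions of fibres works verbatim over $\mathbb{Z}_\ell$, since it only uses proper base change, the dimension estimates on fibres and strata, and the cohomological amplitude of the constant sheaf — none of which require $\Lambda$ to be a field. One should note that for a general (not necessarily flat) $\Lambda$-coefficient sheaf the input $\mathcal{A}_1 \tilde{\boxtimes} \mathcal{A}_2$ is defined via ${}^p\textnormal{H}^0$ precisely so that it is perverse to begin with, and the semismallness argument then only needs it to be a perverse sheaf constructible with respect to the $Gr_{\mu_\bullet'}$-stratification, which it is.

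I expect the main obstacle to be the bookkeeping around $\mathbb{Z}_\ell$-coefficients in two places: first, justifying that $\textnormal{IC}_{\mu_1} \tilde{\boxtimes} \textnormal{IC}_{\mu_2}$ really is (a shift of) the intersection complex of the stratified space $Gr_{\leq\mu_1} \tilde{\times} Gr_{\leq\mu_2}$ — over a field this is automatic from the characterization of IC by support and cosupport conditions, but with $\mathbb{Z}_\ell$-coefficients the external tensor product of perverse sheaves can fail to be perverse (as the excerpt itself flags, citing \cite[Lemma 4.1]{MV2}), so one must argue that after ${}^p\textnormal{H}^0$ and the smooth pullback $q^*/p^*$ one still lands on the genuine IC-complex, or else work directly with the shifted constant sheaf on the smooth locus and verify the support conditions by hand using the dimension estimates; and second, confirming that the semismallness dimension count, which in the classical proof yields perversity of the pushforward, does not secretly use semisimplicity or the decomposition theorem (it does not — only the inequalities $\dim m^{-1}(\text{stratum}) \leq \frac{1}{2}\,\textnormal{codim}$ are needed for the perversity statement, the decomposition into summands being a separate matter that indeed fails over $\mathbb{Z}_\ell$). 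A secondary point to handle carefully is the reduction step: one must check that convolution on the left preserves (right-)exactness or at least that $\star$ commutes with the perverse truncations enough to reduce a general pair $(\mathcal{A}_1,\mathcal{A}_2)$ to the IC-case, which follows since $m$ is a stratified map and $Rm_!$ has the expected amplitude, but deserves an explicit sentence.
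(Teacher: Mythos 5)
Your core idea — that perversity of $\mathcal{A}_1 \star \mathcal{A}_2$ follows from stratified semismallness of the convolution map $m$ with respect to the stratification by the $Gr_{\mu_\bullet'}$ from $(2.1)$, and that the dimension-counting argument that extracts perversity from semismallness needs only proper base change and the cohomological amplitude of constructible sheaves (so it is coefficient-agnostic) — is exactly what the paper uses. The paper's proof is one line: cite \cite[Proposition 2.3]{Zh2} for the semismallness of $m$, then invoke \cite[Lemma 4.3]{MV2}, which is the general statement that $Rm_!$ of a perverse sheaf constructible with respect to the $Gr_{\mu_\bullet'}$-stratification is perverse, over any noetherian coefficient ring. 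You correctly identified both ingredients.

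However, the reduction to the case $\mathcal{A}_i = \textnormal{IC}_{\mu_i}$ that occupies most of your first paragraph is both unnecessary and, as written, a genuine gap. It is unnecessary because \cite[Lemma 4.3]{MV2} applies directly to \emph{any} perverse sheaf on the source constructible with respect to the stratification by the $Gr_{\mu_\bullet'}$; there is no need to first decompose $\mathcal{A}_1 \tilde{\boxtimes} \mathcal{A}_2$ into IC-pieces, and doing so only introduces the $\mathbb{Z}_\ell$-coefficient subtleties (external product of IC-sheaves failing to be IC, K\"unneth for the top perverse cohomology, etc.) that you then have to sweat over. It is a gap because the justification for the reduction — ``the convolution is exact in each variable, or at least compatible with the perverse filtration'' — is not available at this point in the paper: the (partial) exactness of $\star$ is Proposition $6.5$, proved much later and \emph{using} the monoidal structure of $\mathrm{H}^*$, which in turn relies on the present proposition. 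Using it here would be circular. Your own closing sentence flags this (``one must check that convolution on the left preserves (right-)exactness \dots which \dots deserves an explicit sentence''), but the explicit sentence is precisely what cannot be supplied without circularity. The fix is simply to drop the reduction: take $\mathcal{A}_1 \tilde{\boxtimes} \mathcal{A}_2$ as given (it is perverse by the definition via ${}^p\mathrm{H}^0$ and descent along the smooth map $q$), observe it is constructible for the stratification $(2.1)$ because each $\mathcal{A}_i$ is constructible for the Schubert stratification, and feed it directly into the semismallness argument.
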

\begin{proof}
Note by \cite[Proposiion 2.3]{Zh2}, the convolution morphism $m$ is a stratified semi-small morphism with respect to the stratification $(2.1)$. Then the proposition follows from \cite[Lemma 4.3]{MV2}
\end{proof}

We can also define the $n$-fold convolution production in $\textnormal{Sat}_{G,\Lambda}$ 
$$
\mathcal{A}_1\star \cdots\star\mathcal{A}_n:=Rm_!(\mathcal{A}_1\tilde{\boxtimes}\cdots\tilde{\boxtimes}\mathcal{A}_n).
$$
where $\mathcal{A}_1\tilde{\boxtimes}\cdots\tilde{\boxtimes}\mathcal{A}_n$ is defined in a similar way as $\mathcal{A}_1\tilde{\boxtimes}\mathcal{A}_2$.
By considering the following isomorphism
$$
{^{p}\textnormal{H}^{0}}(\mathcal{A}_1\boxtimes   ({^{p}\textnormal{H}^{0}}(\mathcal{A}_2 \boxtimes \mathcal{A}_3))\simeq {^{p}\textnormal{H}^{0}}(\mathcal{A}_1\boxtimes\mathcal{A}_2\boxtimes\mathcal{A}_3)\simeq {^{p}\textnormal{H}^{0}}({^{p}\textnormal{H}^{0}}(\mathcal{A}_1\boxtimes\mathcal{A}_2)\boxtimes\mathcal{A}_3),
$$
we conclude that the convolution product is associative
$$
(\mathcal{A}_1\star\mathcal{A}_2)\star\mathcal{A}_3\simeq Rm_!(\mathcal{A}_1\tilde{\boxtimes}\mathcal{A}_2\tilde{\boxtimes}\mathcal{A}_3)\simeq \mathcal{A}_1\star(\mathcal{A}_2\star\mathcal{A}_3).
$$
Thus the category $(\textnormal{Sat}_{G,\Lambda},\star)$ is a monoidal category.

\section{Semi-Infinite Orbits and Weight Functors}

In this section, we review the construction and geometry of semi-infinite orbits of $Gr_G$. By studying the $\mathbb{G}_m$-action on the affine Grassmannian $Gr_G$, we realize the semi-infinite orbits as the attracting loci of the $\mathbb{G}_m$-action in the sense of \cite{DG}. We also define the weight functors and relate them to the hyperbolic localization functors and study their properties. 
\\

Follow notions introduced in \S 1.3 and let $U$ be the unipotent radical of $G$. Since $U\backslash G$ is quasi-affine, recall Proposition $2.1$ we know that that $Gr_U\hookrightarrow Gr_G$
is a locally closed embedding. For any $\lambda\in \mathbb{X}_{\bullet}$ define 
$$
S_{\lambda}:=LU\varpi^{\lambda}
$$
to be the orbit of $\varpi^{\lambda}$ under the $LU$-action. Then $S_{\lambda}=\varpi^{\lambda}Gr_U$ and therefore is locally closed in $Gr_G$ via the emebdding $Gr_U\hookrightarrow Gr_G$. By the Iwasawa decomposition for $p$-adic groups, we know that 
$$
Gr_G=\cup_{\lambda\in\mathbb{X}_{\bullet}}S_{\lambda}.
$$
Similarly, consider the opposite Borel $B^{-}$ and let $U^{-}$ be its unipotent radical. Then we define the opposite semi-infinite orbits 
$$
S_{\lambda}^{-}:=LU^{-}\varpi^{\lambda}.
$$
for $\lambda\in\mathbb{X}_{\bullet}$.
\\

We also recall the following closure relations as in \cite[Proposition 2.5]{Zh2}  (the equal characteristic analogue of this statement is proved in \cite[Proposition 3.1]{MV2} )
\begin{proposition}
Let $\lambda\in\mathbb{X}_{\bullet}$, then $S_{\leq\lambda}:=\bar{S_{\lambda}}=\cup_{\lambda'\leq\lambda}S_{\lambda'}$ and $S_{\leq\lambda}^{-}:=\bar{S_{\lambda}^{-}}=\cup_{\lambda'\leq\lambda}S_{\lambda'}^{-}$.
\end{proposition}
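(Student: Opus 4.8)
The plan is to reduce the statement to the case of the torus via the retraction of the affine Grassmannian onto fixed points of a $\mathbb{G}_m$-action, and then to a direct computation. First I would fix the cocharacter $2\rho^\vee : \mathbb{G}_m \to T$ (or more precisely a sufficiently generic dominant regular cocharacter) and let $\mathbb{G}_m$ act on $Gr_G$ through it by left multiplication. The fixed points are exactly the points $\varpi^\lambda$ for $\lambda \in \mathbb{X}_\bullet$, and the attracting locus of $\varpi^\lambda$ (points whose $\mathbb{G}_m$-orbit has limit $\varpi^\lambda$ as $t \to 0$) is precisely $S_\lambda = LU\varpi^\lambda$, while the repelling locus is $S^-_\lambda = LU^-\varpi^\lambda$; this identification is part of the Mirković–Vilonen picture in mixed characteristic recalled from \cite{Zh2} and I would cite it directly. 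It suffices to treat $S_\lambda$, since the statement for $S^-_\lambda$ follows by replacing $B$ with $B^-$.

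Next I would establish the inclusion $\overline{S_\lambda} \subseteq \bigcup_{\lambda' \le \lambda} S_{\lambda'}$. Any point $x$ of $Gr_G$ lies in a unique $S_{\lambda'}$ by the Iwasawa decomposition, so I must show that if $x \in \overline{S_\lambda}$ then $\lambda' \le \lambda$. The key input is that the function $x \mapsto \langle x \rangle$ recording the semi-infinite stratum (equivalently, $\lambda' = $ the $\mathbb{G}_m$-attractor class of $x$) is semicontinuous in the appropriate direction: along the $\mathbb{G}_m$-orbit closure, the attractor can only move down in the partial order. Concretely, $\overline{S_\lambda}$ is $LU$-stable (since $LU$ is connected and $S_\lambda$ is an $LU$-orbit), it is $\mathbb{G}_m$-stable, and it is closed; combining these with the attracting-locus description forces the strata appearing in $\overline{S_\lambda}$ to have $\varpi^{\lambda'} \in \overline{S_\lambda}$, whence $\varpi^{\lambda'}$ is an accumulation point that can be reached from $\varpi^\lambda$. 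One then reads off $\lambda' \le \lambda$ from the structure of $LU\varpi^\lambda$: a point of $LU\varpi^\lambda$ near $\varpi^{\lambda'}$ has, after projecting along root subgroups, the effect of lowering the coweight by a sum of positive coroots. Equivalently, one can intersect with Schubert varieties and use that $S_\lambda \cap Gr_{\le\mu} \ne \emptyset$ iff $\lambda \le \mu$, together with $\overline{Gr_\mu} = Gr_{\le\mu}$ from Proposition 2.3.

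For the reverse inclusion $S_{\lambda'} \subseteq \overline{S_\lambda}$ whenever $\lambda' \le \lambda$, by transitivity of the partial order and induction it is enough to treat the case $\lambda = \lambda' + \alpha^\vee$ for a simple coroot $\alpha^\vee$. Here I would work inside the $SL_2$ (or $PGL_2$) associated to $\alpha$: embed the corresponding $Gr_{SL_2}$-type picture into $Gr_G$ via the cocharacter, where one checks by an explicit $1$-dimensional computation in $LU_\alpha \varpi^\lambda$ that $\varpi^{\lambda'}$ lies in the closure of $LU_\alpha\varpi^\lambda \subseteq S_\lambda$ (the standard $\begin{pmatrix} 1 & 0 \\ t^{-1} & 1\end{pmatrix}$-type degeneration). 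Then applying $LU$ and taking closures propagates this to all of $S_{\lambda'}$. The main obstacle, I expect, is the first inclusion: controlling $\overline{S_\lambda}$ requires knowing that the semi-infinite orbits are reasonably behaved (locally closed, of the expected "dimension", with closure a union of strata) in the perfect/mixed-characteristic setting, and the cleanest route is to lean on the hyperbolic-localization and attractor/repeller formalism of \cite{DG} together with the geometric facts about $Gr_G$ already imported from \cite{Zh2}, rather than to attempt a bare-hands argument with $LU$-orbits.
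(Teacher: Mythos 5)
The paper does not give its own proof of this proposition: it is quoted verbatim from \cite[Proposition 2.5]{Zh2}, with the equal-characteristic analogue attributed to \cite[Proposition 3.1]{MV2}. Your overall strategy --- realize $S_\lambda$ and $S_\lambda^-$ as attracting and repelling loci of the $2\rho^\vee$-action, reduce the containment $S_{\lambda'}\subseteq \overline{S_\lambda}$ to the single-simple-coroot case $\lambda-\lambda'=\alpha^\vee$ via an explicit $SL_2$-type degeneration, and derive the reverse inclusion from $\mathbb{G}_m$-equivariance of $\overline{S_\lambda}$ --- is exactly the classical route taken in those references, so in spirit you are reproducing the cited proof.

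There is, however, a concrete error in the one place you offer an explicit justification for the ``hard'' inclusion $\overline{S_\lambda}\subseteq\bigcup_{\lambda'\leq\lambda}S_{\lambda'}$. You assert that $S_\lambda\cap Gr_{\leq\mu}\neq\emptyset$ if and only if $\lambda\leq\mu$; the correct criterion is $\lambda^{+}\leq\mu$, where $\lambda^{+}$ is the dominant Weyl-conjugate of $\lambda$ (equivalently, $\lambda$ is a weight of the highest-weight module $V_\mu$). Already for $SL_2$, taking $\mu=\alpha^\vee$ and $\lambda=-3\alpha^\vee$, one has $\lambda\leq\mu$ while $\varpi^\lambda\notin Gr_{\leq\mu}$, hence $S_\lambda\cap Gr_{\leq\mu}=\emptyset$. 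Beyond that slip, note that no single $Gr_{\leq\mu}$ contains all of $S_\lambda$, so intersecting with Schubert varieties cannot by itself decide which $\varpi^{\lambda'}$ lie in $\overline{S_\lambda}$. The step ``one reads off $\lambda'\leq\lambda$ from the structure of $LU\varpi^\lambda$'' is where the genuine content of the hard direction lives; it needs a real argument (in \cite{MV2} a vanishing section of a dominant line bundle, in other treatments an embedding into $Gr_{GL_n}$ and an explicit lattice computation). Your instinct to defer that step to the attractor/repeller formalism of \cite{DG} together with the geometric input already imported from \cite{Zh2} is the right one; just be aware that the Schubert-intersection shortcut as you stated it does not close the gap.
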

Similar to the equal characteristic situation (cf. \cite{MV2} $(3.16)$, $(3.17)$), the semi-infinite orbits may be interpreted as the attracting loci of certain torus action which we describe here. 
\\

Let $2\rho^{\vee}$ be the sum of all positive coroots of $G$ with respect to $B$ and regard it as a cocharacter of $G$. The projection map $L^{+}_p\mathbb{G}_m\rightarrow \mathbb{G}_m$ admits a unique section $\mathbb{G}_m\rightarrow L^{+}_p\mathbb{G}_m$ which identifies $\mathbb{G}_m$ as the maximal torus of $L^{+}_p\mathbb{G}_m$. This section allows us to define a cocharacter 

$$
\mathbb{G}_m\longrightarrow L^{+}\mathbb{G}_m\stackrel{L^{+}(2\rho^{\vee})}{\longrightarrow} L^{+}T\subset L^{+}G.
$$
Then the $\mathbb{G}_m$-action on $Gr_G$ is induced by the action of $L^+G$ on $Gr_G$. Under this action by $\mathbb{G}_m$, the set of fixed points are precisely $R:=\{\varpi^{\lambda}\vert \lambda\in \mathbb{X}_{\bullet}\}$. The attracting loci of this action are semi-infinite orbits i.e.
$$
S_{\lambda}=\{g\in Gr_G\vert \lim_{t\rightarrow 0} L^+(2\rho^{\vee}(t))\cdot(g)=\varpi^{\lambda}\text{ for }t\in \mathbb{G}_m\}.
$$
The repelling loci are the opposite semi-infinite orbits
$$
S_{\lambda}^{-}=\{g\in Gr_G\vert \lim_{t\rightarrow \infty} L^+(2\rho^{\vee}(t))\cdot(g)=\varpi^{\lambda}\text{ for }t\in \mathbb{G}_m\}.
$$
Applying the reduction of structure group to the $L^{+}G$-torsor $LG\rightarrow Gr_G$ to $S_{\mu}$, we obtain an $L^{+}U$-torsor $LU\rightarrow S_{\mu}$. Then we can form the twisted product of semi-infinite orbits $S_{\mu_1}\tilde{\times} S_{\mu_2}\tilde{\times}\cdots \tilde{\times}S_{\mu_n}$. Let $\mu_{\bullet}=(\mu_1,\cdots,\mu_n)$ be a sequence of (not necessarily dominant) coweights of $G$. We define
$$
S_{\mu_{\bullet}}:=S_{\mu_1}\tilde{\times} S_{\mu_2}\tilde{\times}\cdots \tilde{\times}S_{\mu_n}\subset Gr_G\tilde{\times}Gr_G\tilde{\times}\cdots\tilde{\times}Gr_G.
$$
The morphism 
$$
m:S_{\mu_{\bullet}}\longrightarrow S_{\mu_1}\times S_{\mu_1+\mu_2}\times\cdots\times S_{|\mu_{\bullet}|}
$$
given by 
$$
(\varpi^{\mu_1}x_1,\varpi^{\mu_2}x_2,\cdots \varpi^{\mu_1n}x_n) \longmapsto (\varpi^{\mu_1}x_1, \varpi^{\mu_1+\mu_2}(\varpi^{-\mu_2}x_1\varpi^{\mu_2}x_2),\cdots, \varpi^{|\mu_{\bullet}|}(\varpi^{-|\mu_{\bullet}|+\mu_1} x_1\cdots \varpi^{\mu_n}x_n))
$$
is an isomorphism.
The morphism $m$ fits into the following commutative diagram 
$$
\begin{tikzcd}[row sep=huge]
	S_{\leq\mu_0} \arrow[rr,"m"] \arrow[d,hook] & & S_{\mu_1}\times S_{\mu_2}\times \cdots \times S_{\mu_n} \arrow[d,hook]\\
	Gr_G\tilde{\times}Gr_G\tilde{\times}\cdots\tilde{\times}Gr_G \arrow[rr,"(m_1{,}...{,}m_n)"] & & Gr_G\times Gr_G\times\cdots\times Gr_G 	
\end{tikzcd}
$$
We also note there is a canonical isomorphism 
\begin{equation}
(S_{\nu_1}\cap Gr_{\leq\mu_1})\tilde{\times} (S_{\nu_2}\cap Gr_{\leq\mu_2})\tilde{\times}\cdots\tilde{\times}(S_{\nu_n}\cap Gr_{\leq\mu_n})\simeq S_{\nu_{\bullet}}\cap Gr_{\leq\mu_{\bullet}}.
\end{equation}

Recall that if $X$ is a scheme and $i:Y\hookrightarrow X$ is an inclusion of a locally closed subscheme, then for any $\mathcal{F}\in D_c^b(X,\Lambda)$, the local cohomology group is defined as $\textnormal{H}_Y^k(X,\mathcal{F}):=\textnormal{H}^k(Y,i^!\mathcal{F})$.
\begin{proposition}
For any $\mathcal{F}\in P_{L^+G}(Gr_G,\Lambda)$, there is an isomorphism 
$$
\textnormal{H}_c^{k}(S_{\mu},\mathcal{F})\simeq \textnormal{H}_{S_{\mu}^{-}}^{k}(\mathcal{F}),
$$
and both sides vanish if $k\neq (2\rho,\mu)$.
\end{proposition}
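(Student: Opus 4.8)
The plan is to recognize the asserted isomorphism as an instance of Braden's hyperbolic localization and to obtain the concentration in degree $(2\rho,\mu)$ from the Mirković--Vilonen dimension estimates, which hold in mixed characteristic by \cite{Zh2}.

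\emph{Reduction to finite type and Braden.} The sheaf $\mathcal{F}$ is supported on some Schubert variety $Gr_{\leq\lambda}$, which is closed in $Gr_G$ and stable under the $\mathbb{G}_m$-action described above. Hence $\textnormal{H}^k_c(S_\mu,\mathcal{F})=\textnormal{H}^k_c(S_\mu\cap Gr_{\leq\lambda},\mathcal{F})$ and, since $i^!\mathcal{F}$ for $i\colon S_\mu^-\hookrightarrow Gr_G$ is supported on $S_\mu^-\cap Gr_{\leq\lambda}$, also $\textnormal{H}^k_{S_\mu^-}(\mathcal{F})=\textnormal{H}^k(S_\mu^-\cap Gr_{\leq\lambda},i^!\mathcal{F})$; both are finite-dimensional. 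If $\varpi^\mu\notin Gr_{\leq\lambda}$ these intersections are empty and there is nothing to prove, so assume $\varpi^\mu\in Gr_{\leq\lambda}$. On $Gr_{\leq\lambda}$ the $\mathbb{G}_m$-fixed locus is the finite set $\{\varpi^\nu\colon \varpi^\nu\in Gr_{\leq\lambda}\}$, and by the description of the semi-infinite orbits recalled above the attracting, resp. repelling, locus of $\varpi^\nu$ is $S_\nu\cap Gr_{\leq\lambda}$, resp. $S_\nu^-\cap Gr_{\leq\lambda}$. Braden's hyperbolic localization theorem (see \cite{DG}) then supplies a canonical isomorphism $(p^+)_!(q^+)^*\simeq(p^-)_*(q^-)^!$ of functors on the $\mathbb{G}_m$-monodromic derived category of $Gr_{\leq\lambda}$, where $q^\pm$ is the inclusion of the attracting, resp. repelling, locus and $p^\pm$ its retraction to the fixed locus. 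Any $L^+G$-equivariant complex is $\mathbb{G}_m$-monodromic, so evaluating this isomorphism on $\mathcal{F}$ and taking the stalk at $\varpi^\mu$ yields exactly $\textnormal{H}^k_c(S_\mu,\mathcal{F})\simeq\textnormal{H}^k_{S_\mu^-}(\mathcal{F})$.

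\emph{Concentration in degree $(2\rho,\mu)$.} The key estimate is that for any perverse $\mathcal{G}$ on $Gr_{\leq\lambda}$ and any coweight $\mu$ one has $\textnormal{H}^k_c(S_\mu,\mathcal{G})=0$ for $k>(2\rho,\mu)$, and likewise $\textnormal{H}^k_c(S_\mu^-,\mathcal{G})=0$ for $k>-(2\rho,\mu)$. For the first, stratify $S_\mu\cap Gr_{\leq\lambda}$ by the locally closed pieces $S_\mu\cap Gr_\nu$: by $L^+G$-equivariance and perversity, the $*$-restriction of $\mathcal{G}$ to $Gr_\nu$ has lisse cohomology sheaves living in degrees $\leq-(2\rho,\nu)$, while $\dim(S_\mu\cap Gr_\nu)\leq(\rho,\mu+\nu)$ by \cite{Zh2}, so $\textnormal{H}^k_c$ of each stratum vanishes for $k>2(\rho,\mu+\nu)-(2\rho,\nu)=(2\rho,\mu)$; the long exact sequences of the stratification then give the claim. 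The second is proved identically using $\dim(S_\mu^-\cap Gr_\nu)\leq(\rho,\nu-\mu)$. Now take $\mathcal{G}=\mathcal{F}$: this kills $\textnormal{H}^k_c(S_\mu,\mathcal{F})$, hence by the isomorphism above also $\textnormal{H}^k_{S_\mu^-}(\mathcal{F})$, for $k>(2\rho,\mu)$. When $\Lambda$ is a field, Verdier duality gives $\textnormal{H}^k_c(S_\mu^-,\mathbb{D}\mathcal{F})\simeq \textnormal{H}^{-k}_{S_\mu^-}(\mathcal{F})^{\vee}$ with $\mathbb{D}\mathcal{F}$ perverse, so the second estimate applied to $\mathcal{G}=\mathbb{D}\mathcal{F}$ forces $\textnormal{H}^k_{S_\mu^-}(\mathcal{F})=0$ for $k<(2\rho,\mu)$, and transporting back through Braden completes the case of a field. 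For $\Lambda=\mathbb{Z}_\ell$, where $\mathbb{D}\mathcal{F}$ need not be perverse, I would deduce the result from the cases $\Lambda=\mathbb{F}_\ell$ (just settled) and $\Lambda=\bar{\mathbb{Q}}_\ell$ (which is \cite{Zh2}): $\mathcal{F}\otimes^{L}\mathbb{F}_\ell$ has perverse amplitude $[-1,0]$ and $\mathcal{F}\otimes\bar{\mathbb{Q}}_\ell$ is perverse, so both cohomologies in question, after $\otimes^{L}\mathbb{F}_\ell$ and after $\otimes\bar{\mathbb{Q}}_\ell$, are concentrated in degrees near $(2\rho,\mu)$; the universal coefficient sequences together with finite generation over $\mathbb{Z}_\ell$ then pin them down to degree $(2\rho,\mu)$.

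\emph{Main obstacle.} The delicate point is the first step: Braden's theorem must be applied to the $\mathbb{G}_m$-action on $Gr_{\leq\lambda}$, which is the perfection of a projective variety, not a variety. This should cause no real trouble, since a perfection is a universal homeomorphism and therefore leaves the étale topos, the six operations and the perverse $t$-structure unchanged, and the geometric hypotheses of Braden's theorem --- a $\mathbb{G}_m$-action with isolated fixed points and the expected attracting and repelling loci --- are precisely what was recorded in the discussion preceding the statement; but it should be set up with care, as should the descent of the lower-bound vanishing from $\mathbb{F}_\ell$- and $\bar{\mathbb{Q}}_\ell$-coefficients to $\mathbb{Z}_\ell$.
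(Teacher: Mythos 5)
Your structure agrees with the paper on the two substantive steps that do not depend on the coefficient ring: the upper-bound estimate $\textnormal{H}_c^k(S_\mu,\mathcal{F})=0$ for $k>(2\rho,\mu)$ via the dimension estimates of \cite[Cor.\ 2.8]{Zh2} together with the support condition for a perverse sheaf, and the identification $\textnormal{H}_c^k(S_\mu,\mathcal{F})\simeq\textnormal{H}^k_{S_\mu^-}(\mathcal{F})$ via Braden's hyperbolic localization for the $\mathbb{G}_m$-action. Your remark that the perfection issue is harmless is correct, and is indeed how the paper treats it.

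The genuine divergence is in the lower-bound vanishing $\textnormal{H}^k_{S_\mu^-}(\mathcal{F})=0$ for $k<(2\rho,\mu)$. The paper does not invoke Verdier duality here at all: the ``analogous argument'' is a direct d\'evissage over the Schubert stratification, using the \emph{cosupport} condition $i_\nu^!\mathcal{F}\in D^{\geq -(2\rho,\nu)}$ for a perverse $\mathcal{F}$ together with the same dimension estimate for $S_\mu^-\cap Gr_{\leq\nu}$. This is a statement about perverse sheaves which holds verbatim for $\Lambda=\mathbb{Z}_\ell$, $\mathbb{F}_\ell$ or $\bar{\mathbb{Q}}_\ell$, so the paper's proof is uniform in the coefficient ring. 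Your route, by contrast, proves the upper bound for $\textnormal{H}_c(S_\mu^-,\mathbb{D}\mathcal{F})$ and transports it by duality, which requires $\mathbb{D}\mathcal{F}$ to be perverse --- true over a field, false over $\mathbb{Z}_\ell$.

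Your proposed repair for $\Lambda=\mathbb{Z}_\ell$ has a gap. You correctly note that $\mathcal{F}\otimes^L\mathbb{F}_\ell$ has perverse amplitude $[-1,0]$; applying the $\mathbb{F}_\ell$-case to the two perverse cohomologies therefore shows $\textnormal{H}_c^k(S_\mu,\mathcal{F}\otimes^L\mathbb{F}_\ell)=0$ only for $k\notin\{(2\rho,\mu)-1,(2\rho,\mu)\}$, not outside the single degree $(2\rho,\mu)$. The universal coefficient sequence at $k=(2\rho,\mu)-1$ then reads
$$
0\to \textnormal{H}_c^{(2\rho,\mu)-1}(S_\mu,\mathcal{F})\otimes\mathbb{F}_\ell\to \textnormal{H}_c^{(2\rho,\mu)-1}(S_\mu,\mathcal{F}\otimes^L\mathbb{F}_\ell)\to \textnormal{Tor}_1\bigl(\textnormal{H}_c^{(2\rho,\mu)}(S_\mu,\mathcal{F}),\mathbb{F}_\ell\bigr)\to 0,
$$
and the middle and right terms are in general nonzero (the middle term identifies with $\textnormal{H}_c^{(2\rho,\mu)}$ of the $\ell$-torsion subobject $\mathcal{F}[\ell]$, which need not vanish). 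Nothing in this sequence forces the left term to vanish, and the $\bar{\mathbb{Q}}_\ell$-case only shows that $\textnormal{H}_c^{(2\rho,\mu)-1}(S_\mu,\mathcal{F})$ is torsion, not that it is zero. Closing this gap essentially amounts to proving the exactness of $\textnormal{CT}_\mu$, which is downstream of the very proposition you are proving. The way out is the one the paper takes: prove the lower bound directly for $\textnormal{H}^k_{S_\mu^-}$ using the cosupport condition and d\'evissage, with no use of Verdier duality and no base-change to field coefficients.
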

\begin{proof}
The proof is similar to the equal characteristic case (cf \cite[Theorem 3.5]{MV2})as the dimension estimation of the intersections of the semi-infinite orbits and Schubert varieties are established in \cite[Corollary 2.8]{Zh2}. Since $\mathcal{F}$ is perverse, then for any $\nu\in \mathbb{X}_{\bullet}^{+}$, we know that $\mathcal{F}\vert_{Gr_{\nu}} \in D^{\leq-\dim(Gr_{\nu})}= D^{\leq-(2\rho,\nu)}$.
By \cite[Corollary 2.8]{Zh2}, we know that $\textnormal{H}^{k}_c(S_{\mu}\cap Gr_{\leq\nu},\mathcal{F})=0$ if $k>2\dim (S_{\mu}\cap Gr_{\leq\nu})=(2\rho,\mu+\nu)$. Filtering $Gr_G$ by $Gr_{\leq\mu}$, we apply a d\'evissage argument and conclude that
$$
\textnormal{H}_c^{k}(S_{\mu},\mathcal{F})=0 \text{ if } k>(2\rho,\mu)
.$$
An analogous argument proves that 
$$
\textnormal{H}_{S_{\mu}^{-}}^{k}(\mathcal{F})=0
$$
if $k<(2\rho,\mu)$.
\\

Now by regarding $S_{\mu}$ and $S_{\mu}^{-}$ as the attracting and repelling loci of the $\mathbb{G}_m$-action, we apply the hyperbolic localization as in \cite{DG} and obtain  
$$
\textnormal{H}_c^{k}(S_{\mu},\mathcal{F})\simeq \textnormal{H}_{S_{\mu}^{-}}^{k}(\mathcal{F}).
$$
The proposition is thus proved.
\end{proof}
Let $\textnormal{Mod}_{\Lambda}$ denote the category of finitely generated $\Lambda$-modules and $\textnormal{Mod}(\mathbb{X}_{\bullet})$ denote the category of $\mathbb{X}_{\bullet}$-graded finitely generated $\Lambda$-modules.
\begin{definition}
\begin{itemize}For any $\mu\in \mathbb{X}_{\bullet}$, we define
    \item [(1)]the weight functor
$$
\textnormal{CT}_{\mu}:P_{L^+G}(Gr_G,\Lambda)\longrightarrow \textnormal{Mod}(\mathbb{X}_{\bullet}),
$$
by
$$
\textnormal{CT}_{\mu}(\mathcal{F}):=\textnormal{H}_c^{(2\rho,\mu)}(S_{\mu},\mathcal{F}).
$$
\item[(2)]the total weight functor 
$$
\textnormal{CT}:=\bigoplus_{\mu} \textnormal{CT}_{\mu}:P_{L^+G}(Gr_G,\Lambda)\longrightarrow \textnormal{Mod}(\mathbb{X}_{\bullet}),
$$
by
$$
\textnormal{CT}(\mathcal{F}):=\bigoplus_{\mu}\textnormal{CT}_{\mu}(\mathcal{F}):=\bigoplus_{\mu}\textnormal{H}_c^{(2\rho,\mu)}(S_{\mu},\mathcal{F}).
$$
\end{itemize}
\end{definition}
We denote by $F$ the forgetful functor from $\textnormal{Mod}(\mathbb{X}_{\bullet})$ to  $\textnormal{Mod}_{\Lambda}$. 
\begin{proposition}
There is a canonical isomorphism of functors 
$$
\textnormal{H}^{*}(Gr_G,\bullet)\cong F \circ\textnormal{CT}:P_{L^+G}(Gr_G,\Lambda)\longrightarrow \textnormal{Mod}_{\Lambda}.
$$
In addition, both functors are exact and faithful.
\end{proposition}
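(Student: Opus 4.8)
The plan is to obtain the isomorphism $\textnormal{H}^*(Gr_G,\bullet)\cong F\circ\textnormal{CT}$ by summing the decompositions furnished by Proposition 4.3 over all $\mu\in\mathbb{X}_\bullet$, and then to deduce exactness and faithfulness from the cohomological concentration already established there. First I would fix $\mathcal{F}\in P_{L^+G}(Gr_G,\Lambda)$ supported on some $Gr_{\leq\nu}$. Since $Gr_{\leq\nu}$ is covered by the finitely many locally closed pieces $S_\mu\cap Gr_{\leq\nu}$ (by the Iwasawa decomposition, only finitely many $\mu$ contribute), I would choose an enumeration $\mu^{(1)},\mu^{(2)},\ldots$ of these coweights refining the partial order so that $S_{\leq\mu^{(j)}}\cap Gr_{\leq\nu}$ is closed in the open set $\bigcup_{i\leq j}S_{\mu^{(i)}}\cap Gr_{\leq\nu}$. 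This yields a filtration of $Gr_{\leq\nu}$ with successive open-closed decompositions, hence long exact sequences in cohomology relating $\textnormal{H}^*(Gr_{\leq\nu},\mathcal{F})$, the cohomology of the open part, and $\textnormal{H}^*_c(S_{\mu^{(j)}},\mathcal{F})$ of the locally closed stratum (using that $S_\mu$ is locally closed and that cohomology with supports along a closed subset of an open subset computes compactly supported cohomology along the stratum, up to the usual $i_!i^!\to\textnormal{id}\to j_*j^*$ triangle).

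The crucial input is Proposition 4.3: each $\textnormal{H}^k_c(S_\mu,\mathcal{F})$ vanishes unless $k=(2\rho,\mu)$, and $(2\rho,\mu)$ has a fixed parity on each connected component of $Gr_G$ (it is $\equiv(2\rho,\nu)\pmod 2$ for all $\mu$ with $S_\mu\cap Gr_{\leq\nu}\neq\emptyset$, since $\mu$ and $\nu$ differ by a sum of coroots and $(2\rho,\alpha^\vee)$ is the height, but more to the point all relevant $\mu$ lie in a single coset of the coroot lattice). Therefore in each long exact sequence the connecting maps between the stratum cohomology and the rest either vanish for degree reasons or, where degrees coincide, one argues inductively that the sequence splits. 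Concretely I would prove by induction on the number of strata that $\textnormal{H}^*(\bigcup_{i\leq j}S_{\mu^{(i)}}\cap Gr_{\leq\nu},\mathcal{F})\cong\bigoplus_{i\leq j}\textnormal{H}^{(2\rho,\mu^{(i)})}_c(S_{\mu^{(i)}},\mathcal{F})$; the inductive step is the triangle $i_!i^!\mathcal{F}\to\mathcal{F}'\to Rj_*j^*\mathcal{F}'$ on the $j$-th open piece, whose long exact sequence degenerates because the two outer terms are concentrated in complementary degrees by Proposition 4.3 (the open piece term by induction, the closed stratum term by 4.3 directly), so there is no room for a nonzero connecting map. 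Passing to the colimit over $\nu$ gives the canonical isomorphism $\textnormal{H}^*(Gr_G,\mathcal{F})\cong\bigoplus_\mu\textnormal{H}^{(2\rho,\mu)}_c(S_\mu,\mathcal{F})=F\circ\textnormal{CT}(\mathcal{F})$, and naturality in $\mathcal{F}$ is clear since every map in sight is functorial.

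For exactness: the functor $\textnormal{CT}_\mu$ is a single cohomology group $\textnormal{H}^{(2\rho,\mu)}_c(S_\mu,i_\mu^*\mathcal{F})$ sitting at the extreme end of its range of possible degrees (Proposition 4.3 says no cohomology above $(2\rho,\mu)$), so for a short exact sequence $0\to\mathcal{F}_1\to\mathcal{F}_2\to\mathcal{F}_3\to 0$ of perverse sheaves the long exact sequence of compactly supported cohomology of $S_\mu$ forces $0\to\textnormal{CT}_\mu(\mathcal{F}_1)\to\textnormal{CT}_\mu(\mathcal{F}_2)\to\textnormal{CT}_\mu(\mathcal{F}_3)\to 0$ to be exact, because the neighboring terms $\textnormal{H}^{(2\rho,\mu)+1}_c(S_\mu,\mathcal{F}_1)$ and $\textnormal{H}^{(2\rho,\mu)-1}_c(S_\mu,\mathcal{F}_3)$—wait, the latter need not vanish, so instead I use that all three sheaves have compactly supported cohomology of $S_\mu$ concentrated in degree $(2\rho,\mu)$, whence the long exact sequence breaks into the desired short exact sequences. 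Summing over $\mu$ gives exactness of $\textnormal{CT}$, hence of $\textnormal{H}^*(Gr_G,\bullet)$. Faithfulness: if $f\colon\mathcal{F}_1\to\mathcal{F}_2$ has $\textnormal{CT}(f)=0$, consider $\mathcal{F}_0=\textnormal{im}(f)$ (a perverse subquotient); exactness gives $\textnormal{CT}(\mathcal{F}_0)\hookrightarrow\textnormal{CT}(\mathcal{F}_2)$ with image $\textnormal{im}(\textnormal{CT}(f))=0$, so $\textnormal{CT}(\mathcal{F}_0)=0$; but $\mathcal{F}_0$ is supported on some $Gr_{\leq\nu}$ and $S_\nu\cap Gr_{\leq\nu}=Gr_\nu$ is open dense in that component, so if $\mathcal{F}_0\neq0$ then restricting to the largest $\nu$ in its support gives $\textnormal{CT}_\nu(\mathcal{F}_0)=\textnormal{H}^{(2\rho,\nu)}_c(Gr_\nu,\mathcal{F}_0|_{Gr_\nu})\neq 0$, a contradiction; hence $f=0$.

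The main obstacle is the degeneration argument for the stratification spectral sequence: I must be careful that although Proposition 4.3 gives a clean single-degree concentration for each $\textnormal{H}^*_c(S_\mu,\mathcal{F})$, the degrees $(2\rho,\mu)$ for different $\mu$ in the same component are generally all distinct but all of the same parity, so a priori a connecting homomorphism $\textnormal{H}^{(2\rho,\mu)}_c(S_\mu,\cdot)\to\textnormal{H}^{(2\rho,\mu')+1}_c(\cdot)$ could be nonzero only when $(2\rho,\mu')=(2\rho,\mu)-1$, which the parity constraint forbids. Making this parity bookkeeping precise—in particular verifying that all $\mu$ with $S_\mu\cap Gr_{\leq\nu}\neq\emptyset$ satisfy $(2\rho,\mu)\equiv(2\rho,\nu)\pmod 2$—is the one genuinely load-bearing computation, and it follows from $\mu\leq\nu$ together with $\mu\in\nu+\mathbb{Z}\Delta^\vee$ and $(2\rho,\alpha^\vee)\in 2\mathbb{Z}+(2\rho,\text{simple coroot})$, i.e. from the standard fact that $\dim Gr_\mu=(2\rho,\mu)$ has constant parity on connected components of $Gr_G$; this is exactly the parity phenomenon underlying the semisimplicity of $\textnormal{Sat}_{G,\bar{\mathbb{Q}}_\ell}$ and is recorded in \cite{MV2} and \cite{Zh2}.
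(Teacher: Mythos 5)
Your proof reaches the right conclusion by a genuinely different route from the paper's. The paper splits the filtration coming from the $\{S_\mu\}$-stratification by pairing it with the complementary filtration coming from the opposite stratification $\{S_\mu^-\}$; the complementarity rests precisely on the hyperbolic-localization isomorphism $\textnormal{H}^k_c(S_\mu,\mathcal{F})\cong\textnormal{H}^k_{S_\mu^-}(\mathcal{F})$ established in Proposition~4.2. You instead use only the $\{S_\mu\}$-stratification and observe that the degrees $(2\rho,\mu)$ of the graded pieces share a common parity on each connected component of $Gr_G$ (because $\nu-\mu\in\mathbb{Z}\Delta^\vee$ forces $(2\rho,\nu)-(2\rho,\mu)\in 2\mathbb{Z}$), so each long exact sequence degenerates for degree reasons and each short exact sequence has its two outer terms concentrated in disjoint sets of degrees, hence splits canonically degree-by-degree. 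This parity argument does work and yields a canonical direct-sum decomposition; it is the same phenomenon that underlies parity vanishing and the semisimplicity of $\textnormal{Sat}_{G,\bar{\mathbb Q}_\ell}$. What the paper's two-filtration argument buys in exchange is that the opposite filtration $\textnormal{Fil}'_{<\mu}$ is constructed and recorded explicitly, which is reused in the proofs of Propositions~6.2, 6.3 and~6.6 later; your argument would not by itself furnish that second filtration.

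There is one genuine imprecision you should fix. You invoke the triangle \(i_!i^!\to\mathrm{id}\to Rj_*j^*\) and assert that ``cohomology with supports along a closed subset of an open subset computes compactly supported cohomology along the stratum''. That is false in general: for $Z$ closed in an open $U$, $\textnormal{H}^*_Z(U,\mathcal{F})=\textnormal{H}^*(Z,i^!\mathcal{F})$ is \emph{not} $\textnormal{H}^*_c(Z,\mathcal{F}|_Z)$ (for the stratified setting here, equating the analogous two sides is exactly what Braden's theorem does, and only for the attracting/repelling pair). To get the stratum term you want, use the other open-closed triangle \(j_!j^*\to\mathrm{id}\to i_*i^*\) applied to compactly supported cohomology: peel off the maximal stratum $S_{\mu^{(j)}}\cap Gr_{\leq\nu}$, which is open in the remaining closed piece, giving
\[
\textnormal{H}^k_c\bigl(S_{\mu^{(j)}}\cap Gr_{\leq\nu},\mathcal{F}\bigr)\longrightarrow \textnormal{H}^k_c(X_{j-1},\mathcal{F})\longrightarrow \textnormal{H}^k_c(X_j,\mathcal{F})\longrightarrow \textnormal{H}^{k+1}_c\bigl(S_{\mu^{(j)}}\cap Gr_{\leq\nu},\mathcal{F}\bigr),
\]
with $X_N=Gr_{\leq\nu}$ perfectly proper so $\textnormal{H}^*_c=\textnormal{H}^*$ at the last step. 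A clean way to organize the induction is to group strata by the value of $(2\rho,\mu)$ (all such $\mu$ are pairwise incomparable, and the corresponding union is locally closed), which makes the degree separation, and hence the canonical splitting, immediate. Your faithfulness argument is essentially the paper's, though the assertion $S_\nu\cap Gr_{\leq\nu}=Gr_\nu$ overstates the geometry: one only needs that $S_\nu\cap Gr_\nu$ is a nonempty open of the maximal stratum on which $\mathcal{F}$ restricts to a nonzero shifted local system.
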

\begin{proof}
By the definition of the semi-infinite orbits and the Iwasawa decomposition, we obtain two stratifications of $Gr_G$ by $\{S_{\mu}\vert \mu\in \mathbb{X}_{\bullet}\}$ and $\{S_{\mu}^{-}\vert \mu\in \mathbb{X}_{\bullet}\}$, respectively. The first stratification induces a spectral sequence with $E_1$-terms $\textnormal{H}_c^{k}(S_{\mu},\mathcal{F})$ and abutment $\textnormal{H}^{*}(Gr_G,\mathcal{F})$. This spectral sequence degenerates at the $E_1$-page by Proposition $4.2$. Thus, there is a filtration of $ \textnormal{H}^{*}(Gr_G,\mathcal{F})$ indexed by $(\mathbb{X}_{\bullet}, \leq)$ defined as
$$
\textnormal{Fil}_{\geq\mu}\textnormal{H}^{*}(Gr_G,\mathcal{F}):=\textnormal{ker}(\textnormal{H}^{*}(Gr_G,\mathcal{F})\longrightarrow \textnormal{H}^{*}(S_{<\mu},\mathcal{F})),
$$
where $S_{<\mu}:=\cup_{\mu'<\mu}S_{\mu'}$. Direct computation yields that the associated graded of the above filtration is $\bigoplus_{\mu}\textnormal{H}_c^{(2\rho,\mu)}(S_{\mu},\mathcal{F})$.
\\

Consider the second stratification of $Gr_G$. It also induces a filtration of $\textnormal{H}^{*}(Gr_G,\mathcal{F})$ as
$$
\textnormal{Fil}'_{<\mu}\textnormal{H}^{k}(Gr_G,\mathcal{F}):=\textnormal{Im}(\textnormal{H}^{*}_{S_{\leq\mu}^{-}}(\mathcal{F})\longrightarrow \textnormal{H}^{*}(Gr_G,\mathcal{F})),
$$
where $S_{<\mu}^{-}:=\cup_{\mu'<\mu}S_{\mu'}^{-}$. 
\\

Now, by the previous proposition, the two filtrations are complementary to each other and together define the decomposition $\textnormal{H}^{*}(Gr_G,\bullet)\simeq \bigoplus_{\mu}\textnormal{H}_c^{(2\rho,\mu)}(S_{\mu},\bullet)$.
\\

Next, we prove that the total weight functor $\textnormal{CT}$ is exact. To do so, it suffices to show the weight functor $\textnormal{CT}_{\mu}$ is exact for each $\mu\in\mathbb{X}_{\bullet}$. Let 
$$
0\longrightarrow \mathcal{F}_1\longrightarrow \mathcal{F}_2 \longrightarrow \mathcal{F}_3\longrightarrow 0
$$ 
be an exact sequence in $P_{L^+G}(Gr_G,\Lambda)$. It is given by a distinguished triangle 
$$
\mathcal{F}_1\longrightarrow \mathcal{F}_2\longrightarrow\mathcal{F}_3\stackrel{+1}{\longrightarrow}
$$
in $D_{c}^b(Gr_G,\Lambda)$.
We thus have a long exact sequence of cohomology 
$$
\cdots \longrightarrow \textnormal{H}_c^k(S_{\mu},\mathcal{F}_1)\longrightarrow \textnormal{H}_c^k(S_{\mu},\mathcal{F}_2)\longrightarrow \textnormal{H}^k_c(S_{\mu},\mathcal{F}_3) \longrightarrow \textnormal{H}^k_c(S_{\mu},\mathcal{F}_3)\longrightarrow \textnormal{H}^{k+1}_c(S_{\mu},\mathcal{F}_1)\longrightarrow\cdots .
$$
Then Proposition $4.2$ gives the desired exact sequence 
$$
0\longrightarrow \textnormal{CT}_{\mu}(\mathcal{F}_1)\longrightarrow \textnormal{CT}_{\mu}(\mathcal{F}_2) \longrightarrow \textnormal{CT}_{\mu}(\mathcal{F}_3)\longrightarrow 0.
$$ 

We conclude the proof by showing that $\textnormal{CT}$ is faithful. Since $\textnormal{CT}$ is exact, it suffices to prove that $\textnormal{CT}$ maps non-zero objects to non-zero objects. Let $\mathcal{F}\in \textnormal{Sat}_{G,\Lambda}$ be a nonzero object. Then $\textnormal{supp} (\mathcal{F})$ is a finite union of Schubert cells $Gr_{\nu}$. Choose $\nu$ to be maximal for this property. Then $\mathcal{F}\vert_{Gr_{\nu}}\simeq \underline{\Lambda}^{\oplus n}[(2\rho,\nu)]$ for some natural number $n$ and it follows that $\textnormal{CT}_{\nu}(\mathcal{F})\neq 0$. Thus the functor $\textnormal{H}^{*}$ is faithful.
\end{proof}
\begin{remark}
The weight functor is in fact independent of the choice of the maximal torus $T$. The proof for this is analogous to the equal characteristic case (cf.\cite[Theorem 3.6]{MV2})
\end{remark}
We note the analogue of \cite[Corollary 2.9]{Zh2} also holds in our setting. In particular, $\textnormal{H}^{*}(\textnormal{IC}_{\mu})$ is a free $\Lambda$-module for any $\mu\in\mathbb{X}_{\bullet}^{+}$.
\\

We end this section by proving a weaker statement of \cite[Proposition 2.1]{MV2} which will be used in the process of identification of group schemes in \S 8. 
\begin{lemma}
There is a natural equivalence of tensor categories 
$$
\alpha: P_{L^+G}(Gr_G,\Lambda)\cong P_{L^+(G/Z)} (Gr_G,\Lambda),
$$
where $Z$ is the center of $G$.
\end{lemma}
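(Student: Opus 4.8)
The plan is to exhibit the equivalence $\alpha$ by comparing the affine Grassmannians of $G$ and $G/Z$ directly and then showing that the $L^+G$-equivariant and $L^+(G/Z)$-equivariant perverse sheaf categories on (the relevant connected components of) $Gr_G$ coincide. First I would recall that the isogeny $G \to G/Z$ induces a morphism $Gr_G \to Gr_{G/Z}$ which identifies $Gr_G$ with the union of those connected components of $Gr_{G/Z}$ lying in the image of $\mathbb{X}_\bullet(T) \to \mathbb{X}_\bullet(T/Z)$; more precisely, the components of $Gr_{G/Z}$ are indexed by $\pi_1(G/Z) = \mathbb{X}_\bullet(T/Z)/\mathbb{Z}\Delta^\vee$, and $Gr_G$ maps isomorphically onto the components indexed by the image of $\pi_1(G)$. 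Since both $G$ and $G/Z$ have the same adjoint group, the Schubert stratifications, the semi-infinite orbits, and the dimension formulas from Proposition 2.3 match up under this identification — the weights $\mu$ and the pairing $(2\rho,\mu)$ only depend on the adjoint data. Thus on the level of underlying spaces with stratifications there is nothing to prove beyond bookkeeping of components.

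Next I would address the equivariance. The key point is that $L^+G$ and $L^+(G/Z)$ act on $Gr_G$ (viewed inside $Gr_{G/Z}$) through the same quotient. Concretely, the action of $L^+G$ on any fixed $Gr_{\leq\mu}$ factors through some $L^hG$, which is a perfectly finite-type group scheme; I would check that the induced map $L^h G \to L^h(G/Z)$ has image acting on $Gr_{\leq\mu}$ with the same orbits, and that the difference between the two groups — coming from $L^h Z$ and the failure of $L^h$ to be exact on the isogeny — is a perfectly connected (pro-)unipotent-by-torus thing that acts trivially on cohomology and hence does not change the equivariant category. The standard fact (cf.\ \cite{BL1}, \cite{MV2}) that $P_H(X,\Lambda)$ depends only on the image of $H$ in $\aut(X)$ up to a group with connected fibers then gives the equivalence of abelian categories $\alpha$.

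Finally I would upgrade $\alpha$ to an equivalence of tensor categories. For this I would observe that the convolution diagram $Gr_G \times Gr_G \leftarrow LG \times Gr_G \to Gr_G\tilde\times Gr_G \to Gr_G$ is compatible, under the closed immersion into the corresponding diagram for $G/Z$, with the one defining $\star$; since $\tilde\boxtimes$ and $Rm_!$ are characterized by the same universal properties on both sides and the relevant maps restrict to isomorphisms on the components coming from $Gr_G$, the functor $\alpha$ intertwines the two convolution products and the associativity/unit constraints. I expect the main obstacle to be the second paragraph: making precise, in the perfected mixed-characteristic setting, that passing from $L^+G$ to $L^+(G/Z)$ changes the equivariant derived category only by a group with connected fibers — one must be slightly careful because $L^+$ does not preserve surjectivity of the isogeny and because $Z$ may be disconnected, so the honest statement is about $L^+G^{(h)}$-equivariance for $h\gg 0$ together with a component-counting argument, exactly as in the weaker form of \cite[Proposition 2.1]{MV2} that we only need here.
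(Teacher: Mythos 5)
Your proposal attacks the lemma by a route that is genuinely different from the paper's, and the central appeal it rests on does not hold as stated. The paper never passes to $Gr_{G/Z}$; both categories in the lemma live on the \emph{same} space $Gr_G$, and the whole content is that the $L^+G$- and $L^+(G/Z)$-equivariance conditions cut out the same full subcategory. Your first paragraph, identifying $Gr_G$ with components of $Gr_{G/Z}$, is therefore a detour: even once one accepts that topological identification, one still has to compare the two equivariance structures on a fixed space, which is where the real work is.

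The gap is in your second paragraph. You invoke ``$P_H(X,\Lambda)$ depends only on the image of $H$ in $\aut(X)$ up to a group with connected fibers,'' and describe the discrepancy between $L^hG$ and $L^h(G/Z)$ as a ``perfectly connected (pro-)unipotent-by-torus thing.'' That characterisation is wrong precisely in the case the lemma has to handle: the kernel of $L^+G\to L^+(G/Z)$ is $L^+Z$, and for $Z$ a finite diagonalizable centre with $p\nmid |Z|$ (e.g.\ $\mu_n\subset SL_n$), $L^+Z\cong Z$ is a nontrivial \emph{finite \'etale} group scheme over $k$ --- disconnected, not unipotent, not a torus. Likewise the cokernel of $L^+G/L^+Z\hookrightarrow L^+(G/Z)$ is governed by $H^1$-type terms attached to $Z$ and carries \'etale contributions for disconnected $Z$. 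So the general ``connected kernel/cokernel does not change the equivariant category'' principle simply does not apply here, and you cannot conclude. You yourself flag this at the end, but the proposed remedy --- ``exactly as in the weaker form of \cite[Proposition 2.1]{MV2}'' --- is circular, since that cited proposition \emph{is} the statement being proved; its proof is the nontrivial content, not a black box you can quote.

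What actually carries the argument (both in \cite[Proposition~A.1]{MV2} and in the paper's Lemma~4.6) is a different connectedness statement: not that the kernel $L^+Z$ is connected, but that the \emph{stabilizer} of a point of a single orbit $Gr_{\mu}$ inside $L^+G$ is connected. Concretely, the paper uses the fibration $\pi_{\mu}\colon Gr_{\mu}\to(\bar G/\bar P_{\mu})^{p^{-\infty}}$ with perfected-affine-space fibres; the $L^+G$-stabilizer of a point of the base is an extension of a parabolic by a congruence subgroup, hence connected, so $P_{L^+G}(Gr_{\mu},\Lambda)\simeq \textnormal{Mod}_{\Lambda}(\mathbb{B}L^nG)$, and the identical computation on the $G/Z$ side lands in the same category because $\bar G/\bar P_{\mu}=\overline{(G/Z)}/\overline{P_{\mu}/Z}$. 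The general closed union of orbits is then handled by the closed--open induction with the glued category $\mathcal{C}(\widetilde F_1,\widetilde F_2,\widetilde T)$ of \cite[Appendix~A]{MV2}, and the easy direction (that an $L^+(G/Z)$-equivariant sheaf is automatically $L^+G$-equivariant, because the action factors) is proved separately first to exhibit $P_{L^+(G/Z)}$ as a full subcategory. If you want to salvage your outline, replace the appeal to ``connected fibers of the homomorphism'' with the connectedness of stabilizers orbit-by-orbit, and supply the inductive gluing step; that is in fact the paper's proof.
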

\begin{proof}
We first note that the category $P_{L^+(G/Z)} (Gr_G,\Lambda)$ can be identified as a full subcategory of $ P_{L^+G}(Gr_G,\Lambda)$. Let $X\subset Gr_G$ be a finite union of $L^+G$-orbits. Since $L^+Z$ acts on $Gr_G$ trivially, the action of $L^+G$ on $Gr_G$ factors through the quotient $L^+(G/Z)$. In other words, the following diagram commutes 
$$
\begin{tikzcd}[row sep=huge]
L^+G\times X \arrow[r,"a_1"] \arrow[d,"q"] & X \\
L^+(G/Z)\times X \arrow[ur,"a_2"]
\end{tikzcd}
$$
where $a_1$ and $a_2$ are the action maps and $q$ is the natural projection map. In addition, the following diagram is clearly commutative
$$
\begin{tikzcd}[row sep=huge]
L^+G\times X \arrow[r,"p_1"] \arrow[d,"q"] & X \\
L^+(G/Z)\times X \arrow[ur,"p_2"]
\end{tikzcd}
$$
It follows that any $\F\in P_{L^+(G/Z)}(Gr_G,\Lambda)$, $\F$ is $L^+G$-equivariant by checking the definition directly.
\\

Thus it suffices to prove reverse direction. We prove by induction on the number of $L^+G$-orbits in $X$ as in the proof of \cite[Proposition A.1]{MV2}. First, we assume $X$ contains exactly one $L^+G$-orbit. Write $X=Gr_{\mu}$ for some $\mu\in\mathbb{X}_{\bullet}^+$. Recall Proposition $2.3.(1)$ and \cite[1.4.4]{Zh2}, there is a natural projection with fibres isomorphic to the perfection of affine spaces
$$
\pi_{\mu}:Gr_{\mu}\simeq L^+G/(L^+G\cap \varpi^{\mu}L^+G\varpi^{-\mu})\longrightarrow (\bar{G}/\bar{P}_{\mu})^{p^{-\infty}}
$$
$$
 (gt^{\mu} \textnormal{ mod} L^+G) \longmapsto (\bar{g}\textnormal{ mod} \bar{P}_{\mu}^{p^{-\infty}})
$$
where $P_{\mu}$ denotes the parabolic subgroup of $G$ generated by
 the root subgroups $U_{\alpha}$ of $G$ corresponding to those roots $\alpha$ satisfying $\big<\alpha,\mu\big>\leq 0$ and $\bar{P}_{\mu}$ denotes the fibre of $P_{\mu}$ at $\mathcal{O}/\varpi$. Assume that $L^+G$ acts on $(\bar{G}/\bar{P}_{\mu})^{p^{-\infty}}$ by a finite type quotient $L^nG$. Since the stabilizer of this action of $L^+G$ is connected, we have a canonical equivalence of categories (cf.\cite[A.3.4]{Zh2}) $P_{L^+G}((\bar{G}/\bar{P}_{\mu})^{p^{-\infty}},\Lambda)\simeq P_{L^nG}((\bar{G}/\bar{P}_{\mu})^{p^{-\infty}},\Lambda)$. Finally, we note $P_{L^nG}((\bar{G}/\bar{P}_{\mu})^{p^{-\infty}},\Lambda)$ is equivalent to $\textnormal{Mod}_{\Lambda}(\mathbb{B}L^nG)$, we conclude that $P_{L^+G}(Gr_{\mu},\Lambda)\simeq \textnormal{Mod}_{\Lambda}(\mathbb{B}L^nG)$. A completely similar argument implies that $P_{L^+(G/Z)}(Gr_{\mu},\Lambda)\simeq \textnormal{Mod}_{\Lambda}(\mathbb{B}L^nG)$ which concludes the proof in the case $X=Gr_{\mu}$.
\\

Now we treat the general $X$. Choose $\mu\in\mathbb{X}_{\bullet}^+$ such that $Gr_{\mu}\subset X$ is a closed subspace, and let $U:=X\backslash Gr_{\mu}$. By induction hypothesis, we know that $P_{L^+G}(U,\Lambda)$ is equivalent to $P_{L^+(G/Z)}(U,\Lambda)$. Denote by $i:Gr_{\mu}\hookrightarrow X$ and $j:U\hookrightarrow X$ the closed and open embeddings, respectively. Let $\widetilde{Gr}_{\mu}:=L^+(G/Z)\times Gr_{\mu}$, $\widetilde{X}:=L^+(G/Z)\times X$, and $\widetilde{U}:=L^+(G/Z)\times U$. Denote by $\tilde{j}:\widetilde{U}\hookrightarrow \widetilde{X}$ the open embedding. The stratification on $X$ induces a stratification on $\widetilde{X}$ which has strata equal to products of $L^+(G/Z)$ with strata in $X$. Restricting to $\widetilde{U}$, we get a stratification of $\widetilde{U}$. Considering the action of $L^+G$ on $\widetilde{X}$ and $\widetilde{U}$ by left multiplication on the second factor, we can define categories $P_{L^+G}(\widetilde{X},\Lambda)$ and $P_{L^+G}(\widetilde{U},\Lambda)$. Define the functor
$$
\widetilde{\textnormal{CT}}_{\mu}:P_{L^+G}(\widetilde{X},\Lambda)\longrightarrow \textnormal{Loc}_{\Lambda}(L^+(G/Z)) 
$$
$$
\widetilde{\textnormal{CT}}_{\mu}(\F):=\mathcal{H}^{(2\rho,\mu)+\dim L^+(G/Z)}(\pi_!\tilde{i}^*(\F))
$$
for any $\F\in P_{L^+G}(\widetilde{X},\Lambda)$, where $\widetilde{i}:L^+(G/Z)\times (S_{\mu}\cap X)\hookrightarrow \widetilde{X}$ is the locally closed embedding, $\pi:L^+(G/Z)\times (S_{\mu}\cap X)\rightarrow L^+(G/Z)$ is the natural projection, and $\textnormal{Loc}_{\Lambda}(L^+(G/Z))$ denotes the category of $\Lambda$-local systems on $L^+(G/Z)$. A completely similar argument as in Proposition $4.2$ shows that $\widetilde{\textnormal{CT}}_{\mu}$ is an exact functor. Let $\widetilde{F}_1:=\widetilde{\textnormal{CT}}_{\mu}\circ {^{p}\tilde{j}_!}, \widetilde{F}_2:\widetilde{\textnormal{CT}}_{\mu}\circ {^{p}j_*}:P_{L^+G}(\widetilde{U},\Lambda)\rightarrow \textnormal{Loc}_{\Lambda}(L^+(G/Z))$. Finally let $\widetilde{T}:=\widetilde{\textnormal{CT}}_{\mu}({^{p}\tilde{j}_!}\rightarrow {^{p}\tilde{j}_*})$. Then as in \cite[Appendix A]{MV2}, we get an equivalence of abelian categories
$$
\widetilde{E}:P_{L^+G}(\widetilde{X},\Lambda)\simeq \mathcal{C}(\widetilde{F}_1,\widetilde{F}_2,\widetilde{T})
$$
where the second category in the above is defined in \textit{loc.cit}. Note any $\F\in P_{L^+(G/Z)}(\widetilde{X},\Lambda)$ is $\mathbb{G}_m$-equivariant, the same argument in \cite[Proposition A.1]{MV2} applies here and gives 
$$
\widetilde{E}(a_2^*\F)\simeq \widetilde{E}(p_2^*\F).
$$
Then we deduce an isomorphism $a_2^*\F\simeq p_2^*\F$ and the lemma is thus proved.

\end{proof}

\section{Representability of Weight Functors and the Structure of Representing Objects}
In section \S 4, we constructed the weight functors and the total weight functor 
$$
\textnormal{CT}_{\mu},\textnormal{CT} :P_{L^+G}(Gr_G,\Lambda)\rightarrow \textnormal{Mod}_{\Lambda}.
$$
We will prove in this section that both functors are (pro)representable, so that we can apply the (generalized) Deligne and Milne's Tannakian formalism as in \cite[\S 11]{MV2}. In the following, we will recall the induction functor (cf .\cite{MV1}) to explicitly construct the representing object of each weight functor and use the representability of the total weight functor to prove that the Satake category has enough projective objects. At the end of this section, we give a few propositions of the representing objects which will be used later when we apply the (generalized) Tannakian formalism.
\\

Let $Z\subset Gr_G$ be a closed subspace which is a union of finitely many $L^+G$-orbits. Choose $n\in\mathbb{Z}$ large enough so that $L^+G$ acts on $Z$ via the quotient $L^nG$. Let $\nu\in \mathbb{X}_{\bullet}$. As in \cite[\S 9]{MV2}, we consider the following commutative diagram 
$$
\begin{tikzcd}[row sep=huge]
S_{\nu}^{-}\cap Z \arrow[d,hook,"i"] & L^nG\times (S_{\nu}^{-}\cap Z) \arrow[l] \arrow[r,"\Tilde{a}"] \arrow[d,hook] & Z \arrow[d,equal]\\
Z & L^nG\times Z \arrow[l,"p"]\arrow[r,"a"] & Z
\end{tikzcd}
$$
where $i$ is the locally closed embedding, $a$ and $\Tilde{a}$ are the action maps, and $p$ is the projection map. Then we define 
$$
P_Z(\nu,\Lambda) :={^p\textnormal{H}}^{0}(a_!p^!i_!\underline{\Lambda}_{S_{\nu}^{-}\cap Z}[-(2\rho,\nu)])
$$
The following two results are analogues of the equal characteristic counterparts and can be proved exactly in the same manner. We omit the proofs and refer readers to \cite[Proposition 9.1, Corollary 9.2]{MV2} for details.                     
\begin{proposition}
The restriction of the weight functor $\textnormal{CT}_{\nu}$ to $P_{L^+G}(Z,\Lambda)$ is represented by the projective object $P_Z(\nu,\Lambda)$ in $P_{L^+G}(Z,\Lambda)$. 
\end{proposition}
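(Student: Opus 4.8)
The plan is to follow the strategy of Mirković–Vilonen in \cite[\S 9]{MV2} essentially verbatim, using the geometric inputs already assembled in \S 2 and \S 4. First I would reduce the representability statement to a concrete adjunction computation. For $\F \in P_{L^+G}(Z,\Lambda)$, applying the hyperbolic-localization picture of Proposition $4.2$ (in the form valid on the finite union $Z$ of orbits) identifies $\textnormal{CT}_\nu(\F) = \textnormal{H}_c^{(2\rho,\nu)}(S_\nu \cap Z, \F)$ with $\textnormal{H}^{(2\rho,\nu)}_{S_\nu^- \cap Z}(Z,\F) = \textnormal{H}^0\big(Z, i_*i^!\F[-(2\rho,\nu)]\big)$ where $i : S_\nu^-\cap Z \hookrightarrow Z$ is the locally closed embedding. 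By adjunction this is $\textnormal{Hom}_{D^b_c(Z,\Lambda)}\big(i_!\underline{\Lambda}_{S_\nu^-\cap Z}[-(2\rho,\nu)],\F\big)$. The point of introducing the action diagram is to promote this to a Hom in the \emph{equivariant} category: since $\F$ is $L^+G$-equivariant, $a^!\F \simeq p^!\F$ canonically on $L^nG \times Z$, and descent along the smooth surjection $a$ (resp. $p$) converts the above into $\textnormal{Hom}_{P_{L^+G}(Z,\Lambda)}\big({^p\textnormal{H}}^0(a_!p^!i_!\underline{\Lambda}_{S_\nu^-\cap Z}[-(2\rho,\nu)]), \F\big)$. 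Here one uses that ${^p\textnormal{H}}^0$ is the left adjoint to the inclusion ${^pD}^{\le 0} \hookrightarrow D^b_c$ composed with the truncation, together with the fact that $\F$, being perverse, sees only the ${^p\textnormal{H}}^0$ of its argument under $\textnormal{Hom}(-,\F)$ — precisely as in \cite[Proof of Proposition 9.1]{MV2}. This chain of identifications exhibits $P_Z(\nu,\Lambda)$ as the object representing $\textnormal{CT}_\nu|_{P_{L^+G}(Z,\Lambda)}$.

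Next I would verify projectivity. By the representability just established, $\textnormal{Hom}_{P_{L^+G}(Z,\Lambda)}(P_Z(\nu,\Lambda), -) \cong \textnormal{CT}_\nu(-)$ as functors on $P_{L^+G}(Z,\Lambda)$, and $\textnormal{CT}_\nu$ is exact by Proposition $4.5$ (its proof applies unchanged when $Gr_G$ is replaced by the closed subspace $Z$, the $d\acute{e}vissage$ there being a finite one). An exact corepresentable functor has projective representing object, so $P_Z(\nu,\Lambda)$ is projective in $P_{L^+G}(Z,\Lambda)$. Finally one should check $P_Z(\nu,\Lambda)$ actually lies in $P_{L^+G}(Z,\Lambda)$: it is perverse by construction (the ${^p\textnormal{H}}^0$), it is supported on $Z$ since $a$ has image $Z$, and it is $L^+G$-equivariant because the whole construction $a_!p^!i_!(\cdots)$ is carried out $L^+G$-equivariantly for the left-translation action on $L^nG$, with the $L^+G$-action factoring through $L^nG$ by the choice of $n$; the descent/equivariance bookkeeping is identical to \cite[\S 9]{MV2} and to the argument already used for $\widetilde{\textnormal{CT}}_\mu$ in the proof of Lemma $4.7$.

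The main obstacle — and the reason this is not quite a formal transcription — is ensuring that the hyperbolic-localization identification $\textnormal{H}_c^{(2\rho,\nu)}(S_\nu\cap Z, \F) \simeq \textnormal{H}^{(2\rho,\nu)}_{S_\nu^-\cap Z}(Z,\F)$ and the attendant vanishing hold with $\Lambda = \mathbb{Z}_\ell$ or $\mathbb{F}_\ell$ coefficients on the \emph{perfection} of a (possibly singular) finite union of Schubert cells, rather than on $Gr_G$ itself. This is exactly what Proposition $4.2$ provides once one knows the $\mathbb{G}_m$-action restricts to $Z$ with $S_\nu\cap Z$, $S_\nu^-\cap Z$ as its attracting and repelling loci — which it does, since $Z$ is $L^+G$-stable hence $\mathbb{G}_m$-stable — and once one has the dimension estimates of \cite[Corollary 2.8]{Zh2} for $S_\nu\cap Gr_{\le\mu}$, which are available in mixed characteristic. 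With those in hand, Braden's theorem in the form of \cite{DG} applies verbatim, and the torsion in the cohomology groups, while present, does not interfere: the adjunction and descent arguments are coefficient-insensitive, and one never needs $\textnormal{CT}_\nu(\F)$ to be free. I therefore expect the proof to go through exactly as in \cite[Proposition 9.1, Corollary 9.2]{MV2}, which is why the statement is presented without a separate proof in the text.
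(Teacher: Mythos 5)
Your proposal is correct and follows essentially the same route the paper takes, which is to invoke the argument of \cite[Proposition 9.1, Corollary 9.2]{MV2} verbatim, with the mixed-characteristic inputs (hyperbolic localization via \cite{DG}, the dimension estimates of \cite[Corollary 2.8]{Zh2}, and the $\mathbb{G}_m$-stability of $Z$) substituted for their equal-characteristic counterparts — which is precisely what the paper itself does by omitting the proof and referring to \emph{loc.\ cit.} One small slip: in the intermediate step the shift should read $\textnormal{H}^0(Z,i_*i^!\F[(2\rho,\nu)])$ rather than $[-(2\rho,\nu)]$, although your final adjunction formula $\textnormal{Hom}_{D^b_c(Z,\Lambda)}(i_!\underline{\Lambda}_{S_\nu^-\cap Z}[-(2\rho,\nu)],\F)$ is correct.
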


\begin{corollary}
The category $P_{L^+G}(Z,\Lambda)$ has enough projectives.
\end{corollary}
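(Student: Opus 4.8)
The plan is to deduce the corollary from the representability statement of Proposition 5.1 together with the exactness and faithfulness of the total weight functor $\textnormal{CT}$ recorded in Proposition 4.3, mirroring \cite[Corollary 9.2]{MV2}. Fix a closed subspace $Z\subset Gr_G$ which is a finite union of $L^+G$-orbits, and choose $n$ as in Proposition 5.1. The first observation is a finiteness input: $Z$ is contained in a finite union of Schubert cells, each $Gr_{\leq\mu}$ is perfectly proper hence quasi-compact, and $\{S_\nu\}_{\nu\in\mathbb{X}_\bullet}$ stratifies $Gr_G$ by the Iwasawa decomposition, so only finitely many $S_\nu$ meet $Z$; consequently $\textnormal{CT}_\nu$ vanishes on $P_{L^+G}(Z,\Lambda)$ for all but finitely many $\nu$. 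Moreover, since $Z$ is proper, for any $\mathcal{F}\in P_{L^+G}(Z,\Lambda)$ the module $\textnormal{CT}(\mathcal{F})\cong \textnormal{H}^*(Gr_G,\mathcal{F})$ (Proposition 4.3) is finitely generated over $\Lambda$; combining the two facts, $\textnormal{CT}(\mathcal{F})=\bigoplus_\nu \textnormal{CT}_\nu(\mathcal{F})$ is a finitely generated $\mathbb{X}_\bullet$-graded $\Lambda$-module concentrated in finitely many degrees.

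Now let $\mathcal{F}\in P_{L^+G}(Z,\Lambda)$ and pick a finite set $S$ of homogeneous generators of $\textnormal{CT}(\mathcal{F})$, writing $\nu(x)$ for the degree of $x\in S$. By Proposition 5.1 there is a natural isomorphism $\textnormal{Hom}(P_Z(\nu,\Lambda),-)\cong \textnormal{CT}_\nu(-)$ on $P_{L^+G}(Z,\Lambda)$; let $\phi_x\colon P_Z(\nu(x),\Lambda)\to\mathcal{F}$ be the morphism corresponding to $x$, so that $x$ lies in the image of $\textnormal{CT}_{\nu(x)}(\phi_x)$ (namely, $x$ is $\textnormal{CT}_{\nu(x)}(\phi_x)$ evaluated on the universal element coming from Yoneda). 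Put $P:=\bigoplus_{x\in S}P_Z(\nu(x),\Lambda)$ and $\phi:=(\phi_x)_{x\in S}\colon P\to\mathcal{F}$. Then $P$ is projective, being a finite direct sum of the projective objects produced by Proposition 5.1, and by construction the image of $\textnormal{CT}(\phi)\colon\textnormal{CT}(P)\to\textnormal{CT}(\mathcal{F})$ contains every generator in $S$, hence equals $\textnormal{CT}(\mathcal{F})$.

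It remains to promote surjectivity of $\textnormal{CT}(\phi)$ to surjectivity of $\phi$. Let $\mathcal{Q}$ be the cokernel of $\phi$ in the abelian category $P_{L^+G}(Z,\Lambda)$. Exactness of $\textnormal{CT}$ gives $\textnormal{CT}(\mathcal{Q})=\coker(\textnormal{CT}(\phi))=0$, and faithfulness of $\textnormal{CT}$ --- equivalently, that it sends every nonzero object to a nonzero object --- then forces $\mathcal{Q}=0$. Thus $\phi\colon P\twoheadrightarrow\mathcal{F}$ is a surjection from a projective object, which proves the corollary. I expect no serious obstacle here: the entire content is imported through Proposition 5.1, whose proof follows \cite[Proposition 9.1]{MV2}. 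The only point worth flagging is that, in contrast with the $\bar{\mathbb{Q}}_\ell$-coefficient situation, this surjection need not split, so the last step genuinely uses exactness and faithfulness of the total weight functor rather than any semisimplicity of the Satake category.
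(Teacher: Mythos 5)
Your proof is correct and is exactly the Gabriel-type argument that the paper outsources to \cite[Corollary 9.2]{MV2}: produce a finite direct sum of the representing objects $P_Z(\nu,\Lambda)$ hitting the finitely many nonzero graded pieces of $\textnormal{CT}(\mathcal{F})$, then use exactness and faithfulness of $\textnormal{CT}$ to upgrade surjectivity after applying $\textnormal{CT}$ to surjectivity in the category itself. One small slip: the exactness and faithfulness statement you invoke is Proposition~4.4, not~4.3 (the latter is merely the definition of the weight functors).
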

Let $P_Z(\Lambda):=\oplus_{\nu}P_Z(\nu,\Lambda)$. We note the following mixed characteristic analogues of results of the projective objects in the equal characteristic (cf \cite[Proposition 10.1]{MV2}) hold in our setting.
\begin{proposition}
\begin{itemize}
    \item [(1)]Let $Y \subset Z$ be a closed subset which is a union of $L^+G$-orbits. Then
    $$
    P_Y(\Lambda)={^p\textnormal{H}}^0(P_Z(\Lambda)\vert_Y),
    $$
    and there is a canonical surjective morphism
    $$
    p^Z_Y:P_Z(\Lambda)\longrightarrow P_Y(\Lambda).
    $$
    \item[(2)]For each $L^+G$-orbit $Gr_{\lambda}$, denote by $j_{\lambda}:Gr_{\lambda}\hookrightarrow Gr_G$  the inclusion map. The projective object $P_Z(\Lambda)$ has a filtration with the the associated graded
    $$
    gr(P_Z(\Lambda))\simeq \bigoplus_{Gr_{\lambda}\subset Z} \textnormal{CT}({^p j_{\lambda,*}}\underline{\Lambda}_{Gr_{\lambda}}[(2\rho,\lambda)])^*\otimes {^p j_{\lambda,!}}\underline{\Lambda}_{Gr_{\lambda}}[(2\rho,\lambda)].
    $$ In particular, $\textnormal{H}^{*}(P_Z(\Lambda))$ is a finitely generated free $\Lambda$-module.
    \item[(3)] For $\Lambda=\bar{\mathbb{Q}}_{\ell}$ and $\mathbb{F}_{\ell}$, there is a canonical isomorphism 
    $$
    P_Z(\Lambda)\simeq P_Z(\mathbb{Z}_{\ell})\otimes^L_{\mathbb{Z}_{\ell}} \Lambda.
    $$
\end{itemize}

\end{proposition}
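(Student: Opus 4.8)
The plan is to follow the proof of \cite[Proposition 10.1]{MV2} step by step, feeding in the geometric facts already available in mixed characteristic: the stratified semismallness of the convolution map (\cite[Proposition 2.3]{Zh2}), the dimension bounds on the intersections $S_{\nu}\cap Gr_{\leq\mu}$ (\cite[Corollary 2.8]{Zh2}), the cohomological vanishing of Proposition $4.2$, and the freeness of the intersection cohomology (the analogue of \cite[Corollary 2.9]{Zh2} recalled above). Of the three statements, (1) is essentially formal, (2) is the crux, and (3) is deduced from the freeness furnished by (2) together with a base-change computation. Throughout I abbreviate $\Delta_{\lambda}:={}^{p}\textnormal{H}^{0}(j_{\lambda,!}\underline{\Lambda}_{Gr_{\lambda}}[(2\rho,\lambda)])$ and $\nabla_{\lambda}:={}^{p}\textnormal{H}^{0}(j_{\lambda,*}\underline{\Lambda}_{Gr_{\lambda}}[(2\rho,\lambda)])$.

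For (1): since $Y\subset Z$ is a closed union of $L^{+}G$-orbits, $i_{Y*}=i_{Y!}$ is perverse $t$-exact and embeds $P_{L^{+}G}(Y,\Lambda)$ as a Serre subcategory of $P_{L^{+}G}(Z,\Lambda)$, and the weight functor on $Y$ is the restriction of the one on $Z$ because $S_{\nu}\cap Y\hookrightarrow S_{\nu}\cap Z$ is closed. Using the adjunction $i_{Y}^{*}\dashv i_{Y*}$ and the right $t$-exactness of $i_{Y}^{*}$, for every $\mathcal{G}\in P_{L^{+}G}(Y,\Lambda)$ one gets
\[
\textnormal{Hom}\bigl({}^{p}\textnormal{H}^{0}(i_{Y}^{*}P_{Z}(\nu,\Lambda)),\mathcal{G}\bigr)\simeq\textnormal{Hom}\bigl(P_{Z}(\nu,\Lambda),i_{Y*}\mathcal{G}\bigr)\simeq\textnormal{CT}_{\nu}(\mathcal{G}),
\]
so that ${}^{p}\textnormal{H}^{0}(i_{Y}^{*}P_{Z}(\nu,\Lambda))\simeq P_{Y}(\nu,\Lambda)$ by Yoneda; summing over $\nu$ gives $P_{Y}(\Lambda)={}^{p}\textnormal{H}^{0}(P_{Z}(\Lambda)\vert_{Y})$. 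The morphism $p^{Z}_{Y}$ is the adjunction unit $P_{Z}(\Lambda)\to i_{Y*}i_{Y}^{*}P_{Z}(\Lambda)$ followed by the perverse truncation onto $i_{Y*}{}^{p}\textnormal{H}^{0}(i_{Y}^{*}P_{Z}(\Lambda))=i_{Y*}P_{Y}(\Lambda)$; its cokernel is supported on $Y$ and, by the triangle identities, annihilated by ${}^{p}\textnormal{H}^{0}\circ i_{Y}^{*}$, hence vanishes, which proves surjectivity.

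For (2): induct on the number of $L^{+}G$-orbits in $Z$. In the base case $Z=Gr_{\lambda}$ one has $P_{L^{+}G}(Gr_{\lambda},\Lambda)\simeq\textnormal{Mod}_{\Lambda}$, $\Delta_{\lambda}=\nabla_{\lambda}=\underline{\Lambda}_{Gr_{\lambda}}[(2\rho,\lambda)]$, and $P_{Gr_{\lambda}}(\nu,\Lambda)=\textnormal{CT}_{\nu}(\nabla_{\lambda})^{*}\otimes\Delta_{\lambda}$ by inspection, using that $\textnormal{CT}_{\nu}(\nabla_{\lambda})$ is finitely generated free. For the inductive step let $Gr_{\lambda}\subset Z$ be the open orbit, $j:=j_{\lambda}$ its inclusion, and $i\colon Y:=Z\setminus Gr_{\lambda}\hookrightarrow Z$ the closed complement. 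First I would compute $j^{*}P_{Z}(\Lambda)$: as $j^{*}$ is exact it commutes with ${}^{p}\textnormal{H}^{0}$, and applying it to the formula $P_{Z}(\nu,\Lambda)={}^{p}\textnormal{H}^{0}(a_{!}p^{!}i_{!}\underline{\Lambda}_{S_{\nu}^{-}\cap Z}[-(2\rho,\nu)])$ together with base change identifies $j^{*}P_{Z}(\nu,\Lambda)$ with $\textnormal{CT}_{\nu}(\nabla_{\lambda})^{*}\otimes\underline{\Lambda}_{Gr_{\lambda}}[(2\rho,\lambda)]$. The recollement triangle $j_{!}j^{*}\to\textnormal{id}\to i_{*}i^{*}$ applied to $P_{Z}(\Lambda)$, combined with its projectivity (which, exactly as in \textit{loc. cit.}, prevents the negative perverse cohomology of $i_{*}i^{*}P_{Z}(\Lambda)$ from contributing) and with part (1) identifying ${}^{p}\textnormal{H}^{0}(i_{*}i^{*}P_{Z}(\Lambda))=i_{*}P_{Y}(\Lambda)$, then yields a short exact sequence
\[
0\longrightarrow\textnormal{CT}(\nabla_{\lambda})^{*}\otimes\Delta_{\lambda}\longrightarrow P_{Z}(\Lambda)\longrightarrow i_{*}P_{Y}(\Lambda)\longrightarrow 0,
\]
and splicing it with the filtration of $P_{Y}(\Lambda)$ from the induction hypothesis gives the filtration with the claimed associated graded. (Equivalently one may run the Bernstein--Gelfand--Gelfand-reciprocity argument directly, computing the multiplicity of $\Delta_{\lambda}$ as $\textnormal{Hom}(P_{Z}(\Lambda),\nabla_{\lambda})=\textnormal{CT}(\nabla_{\lambda})$ via $\textnormal{Ext}^{1}(\Delta,\nabla)=0$ and $\textnormal{Hom}(\Delta_{\mu},\nabla_{\lambda})=\delta_{\mu\lambda}\Lambda$.) For the final clause, apply the exact functor $\textnormal{H}^{*}=F\circ\textnormal{CT}$ of Proposition $4.4$ to this filtration: each graded piece contributes $\textnormal{CT}(\nabla_{\lambda})^{*}\otimes\textnormal{H}^{*}(\Delta_{\lambda})$, and $\textnormal{H}^{*}(\Delta_{\lambda})=\textnormal{CT}(\Delta_{\lambda})$ is finitely generated free since each $\textnormal{CT}_{\mu}(\Delta_{\lambda})=\textnormal{H}_{c}^{(2\rho,\mu)}(S_{\mu}\cap Gr_{\lambda},\underline{\Lambda}[(2\rho,\lambda)])$ is free by \cite[Corollary 2.8]{Zh2} and the affine pavings of these intersections; hence $\textnormal{H}^{*}(P_{Z}(\Lambda))$ is finitely generated and free.

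For (3): in $P_{Z}(\nu,\Lambda)={}^{p}\textnormal{H}^{0}(a_{!}p^{!}i_{!}\underline{\Lambda}_{S_{\nu}^{-}\cap Z}[-(2\rho,\nu)])$ the operations $a_{!}$, $p^{!}$ and $i_{!}$ all commute with $-\otimes^{L}_{\mathbb{Z}_{\ell}}\Lambda$ (being push-forwards and shifted pull-backs along morphisms of schemes, by base change and the projection formula), and the constant sheaf $\underline{\Lambda}$ on $S_{\nu}^{-}\cap Z$ is $\underline{\mathbb{Z}_{\ell}}\otimes^{L}_{\mathbb{Z}_{\ell}}\Lambda$, so $a_{!}p^{!}i_{!}\underline{\Lambda}_{S_{\nu}^{-}\cap Z}[-(2\rho,\nu)]\simeq(a_{!}p^{!}i_{!}\underline{\mathbb{Z}_{\ell}}_{\,S_{\nu}^{-}\cap Z}[-(2\rho,\nu)])\otimes^{L}_{\mathbb{Z}_{\ell}}\Lambda$. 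It remains to see that ${}^{p}\textnormal{H}^{0}$ commutes with $-\otimes^{L}_{\mathbb{Z}_{\ell}}\Lambda$ here: this is where (2) is used, since the freeness of the stalks and of $\textnormal{H}^{*}$ of $P_{Z}(\mathbb{Z}_{\ell})$ (hence, via its $\Delta$-filtration, of the relevant perverse cohomology sheaves of $a_{!}p^{!}i_{!}\underline{\mathbb{Z}_{\ell}}_{\,S_{\nu}^{-}\cap Z}[-(2\rho,\nu)]$) kills all $\textnormal{Tor}$, so that $P_{Z}(\mathbb{Z}_{\ell})\otimes^{L}_{\mathbb{Z}_{\ell}}\Lambda$ is again perverse; by base change for $\textnormal{H}_{c}$ it represents $\textnormal{CT}$ on $P_{L^{+}G}(Z,\Lambda)$, and Yoneda gives the canonical isomorphism. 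The hardest step is (2): identifying the multiplicity of $\Delta_{\lambda}$ in $P_{Z}(\Lambda)$ with $\textnormal{CT}(\nabla_{\lambda})^{*}$. This reciprocity statement must combine the recollement with the projectivity of $P_{Z}(\Lambda)$ and, over $\mathbb{Z}_{\ell}$, the integral freeness of $\textnormal{CT}_{\nu}(\nabla_{\lambda})$ and of the stalks of the $\Delta_{\lambda}$; it is also the step on which the surjectivity in (1) and the perversity claim in (3) ultimately rest.
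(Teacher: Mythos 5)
Your proof reconstructs in detail the argument of \cite[Proposition 10.1]{MV2}, which is exactly what the paper does — it states that the MV2 proof ``extends verbatim'' and refers the reader to \emph{loc. cit.} for details. The ingredients you invoke (adjunction and right $t$-exactness of $i_Y^{*}$ for (1), recollement plus projectivity of $P_{Z}(\Lambda)$ and freeness of $\textnormal{CT}_{\nu}(\nabla_{\lambda})$ for the $\Delta$-filtration in (2), and Tor-vanishing from that freeness for the base-change in (3)) are precisely what makes the MV2 argument carry over, so this is the same route, made explicit.
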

Again, as the proof in \cite[Prp.10.1]{MV2} extends verbatim in our setting, we refer to \textit{loc.cit} for details. 
\\

For the rest of this section, we set $\Lambda=\mathbb{Z}_{\ell}$.

\begin{proposition}
Let $\mathcal{F}\in P_{L^+G}(Z,\Lambda)$ be a projective object. Then  $\textnormal{H}^{*}(\mathcal{F})$ is a projective $\Lambda$-module. In particular, $\textnormal{H}^{*}(\mathcal{F})$ is torsion-free.
\end{proposition}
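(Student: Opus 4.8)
The plan is to realize an arbitrary projective object $\mathcal{F}$ of $P_{L^+G}(Z,\mathbb{Z}_{\ell})$ as a direct summand of a finite power of the ``big'' projective $P_Z(\mathbb{Z}_{\ell})=\bigoplus_{\nu}P_Z(\nu,\mathbb{Z}_{\ell})$, and then to transport the freeness of $\textnormal{H}^{*}(P_Z(\mathbb{Z}_{\ell}))$ recorded in Proposition $5.4$(2) along this summand relation; since $\mathbb{Z}_{\ell}$ is a discrete valuation ring, a finitely generated projective $\mathbb{Z}_{\ell}$-module is automatically free, so such a description will suffice. To produce it I would first check that $P_Z(\mathbb{Z}_{\ell})$ is a projective generator of $P_{L^+G}(Z,\mathbb{Z}_{\ell})$: by Proposition $5.1$ each summand $P_Z(\nu,\mathbb{Z}_{\ell})$ represents $\textnormal{CT}_{\nu}$, and since $S_{\nu}\cap Z$ is nonempty for only finitely many $\nu$ the sum $P_Z(\mathbb{Z}_{\ell})$ is an honest object with $\hm(P_Z(\mathbb{Z}_{\ell}),\mathcal{F})\cong\textnormal{CT}(\mathcal{F})$, whose underlying $\mathbb{Z}_{\ell}$-module is the finitely generated module $\textnormal{H}^{*}(\mathcal{F})$ by Proposition $4.4$. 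Choosing generators $f_1,\dots,f_n$ of $\hm(P_Z(\mathbb{Z}_{\ell}),\mathcal{F})$ gives a morphism $\phi\colon P_Z(\mathbb{Z}_{\ell})^{\oplus n}\to\mathcal{F}$ for which $\textnormal{H}^{*}(\phi)$ is surjective; since $\textnormal{H}^{*}$ is exact (Proposition $4.4$) one has $\textnormal{H}^{*}(\coker\phi)=\coker\textnormal{H}^{*}(\phi)=0$, and faithfulness of $\textnormal{H}^{*}$ forces $\coker\phi=0$, so $\phi$ is an epimorphism.

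Because $\mathcal{F}$ is projective, this epimorphism splits, so $\mathcal{F}$ is a direct summand of $P_Z(\mathbb{Z}_{\ell})^{\oplus n}$. Applying the additive functor $\textnormal{H}^{*}$, which commutes with finite direct sums, then exhibits $\textnormal{H}^{*}(\mathcal{F})$ as a direct summand of $\textnormal{H}^{*}(P_Z(\mathbb{Z}_{\ell}))^{\oplus n}$, and the latter is a finitely generated free $\mathbb{Z}_{\ell}$-module by Proposition $5.4$(2). Hence $\textnormal{H}^{*}(\mathcal{F})$ is a finitely generated projective $\mathbb{Z}_{\ell}$-module, therefore free, and in particular torsion-free, which is what is asserted.

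I expect the only step needing care is the claim that every object of $P_{L^+G}(Z,\mathbb{Z}_{\ell})$ --- a fortiori every projective one --- receives an epimorphism from a finite power of $P_Z(\mathbb{Z}_{\ell})$; this rests on the finite generation of $\textnormal{H}^{*}(\mathcal{F})$ over $\mathbb{Z}_{\ell}$ together with the exactness and faithfulness of $\textnormal{H}^{*}$ from Proposition $4.4$ and the representability statement Proposition $5.1$, and is the exact analogue of the corresponding step in \cite[\S 11]{MV2}; the remaining steps are formal. An alternative route would be to combine the base-change isomorphism $P_Z(\mathbb{F}_{\ell})\simeq P_Z(\mathbb{Z}_{\ell})\otimes^{L}_{\mathbb{Z}_{\ell}}\mathbb{F}_{\ell}$ of Proposition $5.4$(3) with a universal-coefficients argument, but the direct-summand argument above is shorter and avoids spectral-sequence bookkeeping.
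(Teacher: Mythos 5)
Your proposal follows essentially the same route as the paper's proof: realize $\mathcal{F}$ as a direct summand of a finite power of the projective generator $P_Z(\mathbb{Z}_{\ell})$ using its projectivity to split the epimorphism, then transport freeness of $\textnormal{H}^{*}(P_Z(\mathbb{Z}_{\ell}))$ from Proposition 5.3(2) to conclude that $\textnormal{H}^{*}(\mathcal{F})$ is a projective (hence free) $\mathbb{Z}_{\ell}$-module. Your writeup is somewhat more explicit than the paper's about why $P_Z(\mathbb{Z}_{\ell})$ is a generator and how the surjection $P_Z(\mathbb{Z}_{\ell})^{\oplus n}\to\mathcal{F}$ is produced, but the argument is the same.
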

\begin{proof}

Since $\textnormal{Hom}(P_Z(\Lambda),\bullet)$ is exact and faithful, the object $P_Z(\Lambda)$ is a projective generator of $P_{L^+G}(Z,\Lambda)$. Then each object in the Satake category admits a resolution by direct sums of $P_Z(\Lambda)$. Choose such a resolution for $\mathcal{F}$ 
\begin{equation}
P_Z(\Lambda)^{\oplus m}\longrightarrow \F\longrightarrow 0.
\end{equation}
In this way, $\F$ can be realized as a direct summand of $P_Z(\Lambda)^{\oplus m}$. By Proposition $5.3$ $(2)$, we notice that $\textnormal{H}^{*}(P_Z(\Lambda)^{\oplus m})$ is a finitely generated free $\Lambda$-module. Finally, by the exactness of the global cohomology functor $\textnormal{H}^{*}(\bullet)$, we conclude that $\textnormal{H}^{*}(\F)$ is a direct summand of $\textnormal{H}^{*}(P_Z(\Lambda)^{\oplus m})$ and is thus a projective $\Lambda$-module.
\end{proof}

\begin{remark}
The results established in Proposition $5.4$ become immediate once the geometric Satake equivalence is established.
\end{remark}
\section{The Monoidal Structure of $\textnormal{H}^*$}
In this section we study the $\mathbb{G}_m$-action on $\GG$ and apply the hyperbolic localization theorem to prove the hypercohomology functor $\textnormal{H}^*:P_{L^+G}(Gr_G,\Lambda)\rightarrow \textnormal{Mod}(\Lambda)$  is a monoidal functor. Then we study the relation between the global weight functor $\textnormal{CT}$ and the global cohomology functor $\textnormal{H}^*$. At the end of this section, we prove that the monoidal structure on $\textnormal{H}^{*}$ we constructed is compatible with the one constructed in \cite{Zh2}.
\\

Recall the action of $\mathbb{G}_m$ on $Gr_G$ defined in \S 4, and let $\mathbb{G}_m$ act on $Gr_G\times  Gr_G$ diagonally. Then, 
$$
    R\times R:=\{(g_1,g_2)\in Gr_G\times  Gr_G\vert L^+(2\rho^{\vee}(t))\cdot (g_1,g_2)=(g_1,g_2)\},
$$

$$
    S_{\mu_1}\times S_{\mu_2}=\{(g_1,g_2)\in Gr_G\times  Gr_G\vert \lim_{t\rightarrow 0} L^+(2\rho^{\vee}(t))\cdot(g_1,g_2)=(\varpi^{\mu_1},\varpi^{\mu_2})\},
$$
and
$$
    S_{\mu_1}^{-}\times S_{\mu_2}^{-}=\{(g_1,g_2)\in Gr_G\times  Gr_G\vert \lim_{t\rightarrow \infty} L^+(2\rho^{\vee}(t))\cdot(g_1,g_2)=(\varpi^{\mu_1},\varpi^{\mu_2})\}
$$
are the stable, attracting and repelling loci of the $\mathbb{G}_m$-action, respectively.
We write $(g_1\TT g_2)\in\GG$ for $(pr_1,m)^{-1}(g_1L^+G,g_1g_2L^+G)$ for any $(g_1 L^+G,g_1,g_2 L^+G)\in Gr_G\times Gr_G$. Define the action of $\mathbb{G}_m$ on $Gr_G\tilde{\times}Gr_G$ by $t (g_1\TT g_2):=(tg_1\TT g_1^{-1}g_2)$ for any $t\in \mathbb{G}_m$. Then the isomorphism  $(pr_1,m):Gr_G\tilde{\times}Gr_G\simeq Gr_G\times Gr_G$ is automatically $\mathbb{G}_m$-equivariant. The stable loci, attracting, and repelling loci of the $\mathbb{G}_m$-action on $\GG$ are 
$$
     R\TT R= \{(\varpi^{\mu_1}\TT\varpi^{\mu_2-\mu_1})\vert \mu_1,\mu_2\in \mathbb{X}_{\bullet}^+\},
$$

$$
    S_{\mu_1}\tilde{\times} S_{\mu_2-\mu_1}=\{(g_1\TT g_2)\in \GG \vert \lim_{t\rightarrow 0} L^+(2\rho^{\vee}(t))\cdot(g_1\TT g_2)=(\varpi^{\mu_1}\TT\varpi^{\mu_2-\mu_1})\},
$$
and
$$
    S_{\mu_1}^{-}\tilde{\times} S_{\mu_2-\mu_1}^{-}=\{((g_1\TT g_2)\in \GG \vert \lim_{t\rightarrow \infty} L^+(2\rho^{\vee}(t))\cdot(g_1\TT g_2)=(\varpi^{\mu_1}\TT\varpi^{\mu_2-\mu_1})\}
$$
respectively. 
\begin{lemma}
For any $\mathcal{F},\mathcal{G}\in \textnormal{Sat}_{G,\Lambda}$, we have the following isomorphisms
\begin{equation}
 \textnormal{H}^{*}_{S_{\mu_1}^{-}\tilde{\times} S_{\mu_2-\mu_1}^{-}}(\GG,\mathcal{F}\tilde{\boxtimes}\G)\simeq\textnormal{H}_c^{*}(\UU,\F\tilde{\boxtimes}\G)\simeq \textnormal{H}_c^{*}(S_{\mu_1},\F)\otimes \textnormal{H}_c^{*}(S_{\mu_2-\mu_1},\G).
\end{equation}
In addition, the above cohomology groups vanish outside degree $(2\rho,\mu_2)$
\end{lemma}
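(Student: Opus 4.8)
The strategy is to run, in the convolution-Grassmannian setting, the same three ingredients that proved Proposition 4.2: a dimension/support estimate forcing the relevant compactly supported cohomology to be concentrated in a single degree, Braden's hyperbolic localization theorem to pass between attracting and repelling loci, and the Künneth formula adapted to the twisted product. First I would record the geometric input: by the isomorphism $(pr_1,m)\colon Gr_G\tilde\times Gr_G\simeq Gr_G\times Gr_G$, which we have arranged to be $\mathbb{G}_m$-equivariant, the attracting locus $S_{\mu_1}\tilde\times S_{\mu_2-\mu_1}$ corresponds to $S_{\mu_1}\times S_{\mu_2}$ and the repelling locus $S_{\mu_1}^-\tilde\times S_{\mu_2-\mu_1}^-$ to $S_{\mu_1}^-\times S_{\mu_2}^-$. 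Intersecting with $Gr_{\leq\mu_\bullet}$ and invoking the canonical isomorphism $(4.2)$ together with the dimension estimates of \cite[Corollary 2.8]{Zh2}, one sees that $S_{\mu_1}\tilde\times S_{\mu_2-\mu_1}$ meets each convolution Schubert stratum $Gr_{\nu_\bullet}$ in something of dimension at most $\tfrac12(2\rho,\mu_\bullet+\nu_\bullet)$; since $\F\tilde\boxtimes\G$ is perverse on $Gr_G\tilde\times Gr_G$ (by construction, as a twisted external product of perverse sheaves) and restricts to each stratum in the appropriate perverse amplitude, a dévissage along the filtration by $Gr_{\leq\mu_\bullet}$ gives $\textnormal{H}^k_c(S_{\mu_1}\tilde\times S_{\mu_2-\mu_1},\F\tilde\boxtimes\G)=0$ for $k>(2\rho,\mu_2)$, and dually $\textnormal{H}^k_{S_{\mu_1}^-\tilde\times S_{\mu_2-\mu_1}^-}(\F\tilde\boxtimes\G)=0$ for $k<(2\rho,\mu_2)$, where I have used $(2\rho,\mu_1)+(2\rho,\mu_2-\mu_1)=(2\rho,\mu_2)$.

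Next I would apply Braden's theorem (in the form used in \cite{DG}, as already invoked for Proposition 4.2) to the $\mathbb{G}_m$-action on $Gr_G\tilde\times Gr_G$: the hyperbolic localization identifies the compactly supported cohomology of the attracting locus with the local cohomology along the repelling locus, giving the first isomorphism
$$
\textnormal{H}^{*}_{S_{\mu_1}^{-}\tilde{\times} S_{\mu_2-\mu_1}^{-}}(\GG,\mathcal{F}\tilde{\boxtimes}\G)\simeq\textnormal{H}_c^{*}(\UU,\F\tilde{\boxtimes}\G),
$$
and combined with the vanishing just proved, both sides are concentrated in degree $(2\rho,\mu_2)$. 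One subtlety to check is that the $\mathbb{G}_m$-fixed locus $R\tilde\times R$ is discrete and the attracting/repelling decomposition is the expected one; this follows formally by transporting the corresponding facts on $Gr_G\times Gr_G$ (Proposition 4.2 and the surrounding discussion) through the equivariant isomorphism $(pr_1,m)$.

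For the second isomorphism I would use the defining property of the twisted product. Restricting $q^*(\F\tilde\boxtimes\G)\simeq p^*({}^p\textnormal{H}^0(\F\boxtimes\G))$ to the preimage of $S_{\mu_1}\tilde\times S_{\mu_2-\mu_1}$ and using that the $L^+U$-torsor trivializes the twisted product over semi-infinite orbits — concretely, $S_{\mu_1}\tilde\times S_{\mu_2-\mu_1}\simeq LU\times^{L^+U}(S_{\mu_2-\mu_1})$ with the base $S_{\mu_1}$ being (a perfection of) an affine space bundle — one gets that $(\F\tilde\boxtimes\G)|_{S_{\mu_1}\tilde\times S_{\mu_2-\mu_1}}$ is, up to the perverse truncation, the external product of $\F|_{S_{\mu_1}}$ and $\G|_{S_{\mu_2-\mu_1}}$. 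Since the $E_1$-degeneration of Proposition 4.2 already shows $\textnormal{H}^*_c(S_{\mu_i},-)$ of a perverse sheaf is the cohomology of a complex concentrated in one degree, the perverse truncation does not affect the outcome after taking $\textnormal{H}^*_c$, and the Künneth formula for compactly supported cohomology yields
$$
\textnormal{H}_c^{*}(\UU,\F\tilde{\boxtimes}\G)\simeq \textnormal{H}_c^{*}(S_{\mu_1},\F)\otimes \textnormal{H}_c^{*}(S_{\mu_2-\mu_1},\G),
$$
with no $\textnormal{Tor}$ correction since by Proposition 4.2 each factor is concentrated in a single degree (and one may further note, via the representability results of \S 5, that these groups are torsion-free when $\Lambda=\mathbb{Z}_\ell$). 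The main obstacle I anticipate is the second step: carefully justifying that the twisted-product structure genuinely decouples over the product of semi-infinite orbits, i.e. that the $LU$-torsor picture makes $\F\tilde\boxtimes\G$ restrict to an honest external product rather than merely a twisted one, and that commuting the $0$-th perverse truncation past the (co)homology along $S_{\mu_1}\tilde\times S_{\mu_2-\mu_1}$ is harmless — both points rely on the degeneration in Proposition 4.2 and on the affine-space-bundle structure of $S_{\mu_1}$, so the argument is essentially a bookkeeping of these two facts rather than anything genuinely new.
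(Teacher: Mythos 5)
Your first isomorphism (via Braden's hyperbolic localization) and the vanishing argument (via the dimension estimate of \cite[Corollary 2.8]{Zh2} and a d\'evissage along Schubert strata) are correct and match the paper's approach. The gap is in the second isomorphism, where your treatment of the K\"unneth formula and the perverse truncation is not adequate.

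You assert that there is ``no $\textnormal{Tor}$ correction since by Proposition 4.2 each factor is concentrated in a single degree,'' but concentration in a single cohomological degree has nothing to do with the absence of $\textnormal{Tor}$ terms; those arise from non-flatness of the $\Lambda$-modules $\textnormal{H}_c^{*}(S_{\mu_1},\F)$ and $\textnormal{H}_c^{*}(S_{\mu_2-\mu_1},\G)$, not from their being spread across degrees. When $\Lambda=\mathbb{Z}_{\ell}$ these modules can indeed have torsion for general objects of the Satake category --- the paper highlights this in its introduction as precisely the difficulty that distinguishes the integral case from the $\bar{\mathbb{Q}}_{\ell}$ case. Your parenthetical appeal to the representability results of \S 5 to claim torsion-freeness only applies to \emph{projective} objects (Proposition 5.4), not to arbitrary $\F,\G$. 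The related assertion that commuting $^{p}\textnormal{H}^{0}$ past the compactly-supported cohomology ``is harmless'' is exactly the nontrivial point and is not justified by anything you cite. The paper's actual argument addresses both issues at once: it first reduces $\textnormal{H}_c^{*}(\UU,\F\tilde{\boxtimes}\G)$ to $\textnormal{H}_{c}^{*}(S_{\mu_1}\times S_{\mu_2-\mu_1}, {^{p}}\textnormal{H}^{0}(\F\boxtimes\G))$ using the $L^nU$-torsor structure (as you sketch), then verifies (6.2) when $\G$ is projective (so that $\F\boxtimes\G$ is already perverse and $\textnormal{H}^{*}(\G)$ is flat), and finally deduces the general case by choosing a projective resolution of $\F$, using right-exactness of $^{p}\textnormal{H}^{0}(\bullet\boxtimes\G)$ and exactness of $\textnormal{H}_c^{*}(S_{\mu_1}\times S_{\mu_2-\mu_1},\bullet)$ to compare the two resulting right-exact sequences. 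That reduction step is the genuinely new content of the lemma and is missing from your proposal.
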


\begin{proof}
By our discussion on the $\mathbb{G}_m$-action on $Gr_G\tilde{\times}Gr_G$ above, the first isomorphism can be obtained by applying Braden's hyperbolic localization theorem as in \cite{DM}. Therefore we are left to prove the second isomorphism and the vanishing property of the cohomology. We first establish a canonical isomorphism 
$$
\textnormal{H}_c^{*}(\UU,\F\tilde{\boxtimes}\G)\cong \textnormal{H}_c^{*}(S_{\mu_1}\times S_{\mu_2-\mu_1},{^{p}}\textnormal{H}^{0}(\mathcal{F}\tilde{\boxtimes}\mathcal{G})).
$$
The idea of constructing this isomorphism is completely similar to the one that appears in \cite[Coro.2.17]{Zh2} and we sketch it here. 
\\

Assume $LU$ acts on $S_{\mu_1}$ via the quotient $L^nU$ for some positive integer $n$. Denote by $S_{\mu_1}^{(n)}$ the pushout of the $L^+U$-torsor $LU\rightarrow S_{\mu_1}$ along $L^+U\rightarrow L^nU$. Then $\pi:S^{(n)}_{\mu_1}\rightarrow S_{\mu_1}$ is an $L^nU$-torsor. Denote by $\pi^*\F$  the pullback of $\F$ along $\pi$. Then we have the following projection morphisms 
$$
S_{\mu_1}\times S_{\mu_2-\mu_1}\stackrel{\pi\times\textnormal{id}}{\longleftarrow} S_{\mu_1}^{(n)}\times S_{\mu_2-\mu_1}\stackrel{q}{\longrightarrow}S_{\mu_1}\tilde{\times}S_{\mu_2-\mu_1}.
$$
Since $L^nU$ is isomorphic to the perfection of an affine space of dimension $n\dim U$, we have the following canonical isomorprhisms

\begin{align*}
&  \textnormal{H}_c^{*}(\UU,\F\tilde{\boxtimes}\G)\\
 \cong & \textnormal{H}_{c}^{*}(S_{\mu_1}^{(n)}\times S_{\mu_2-\mu_1},q^*(\F\tilde{\boxtimes} \G)) \\
 \cong & \textnormal{H}_{c}^{*}(S_{\mu_1}^{(n)}\times S_{\mu_2-\mu_1},(\pi\times\textnormal{id})^*({^{p}}\textnormal{H}^{0}(\F\boxtimes \G))) \\
 \cong & \textnormal{H}_{c}^{*}(S_{\mu_1}\times S_{\mu_2-\mu_1}, {^{p}}\textnormal{H}^{0}(\F\boxtimes\G)). 
\end{align*}

Next, we prove there is a natural isomorphism 
\begin{equation}
\textnormal{H}_{c}^{*}(S_{\mu_1}\times S_{\mu_2-\mu_1}, {^{p}}\textnormal{H}^{0}(\F\boxtimes\G))\cong \textnormal{H}_c^{*}(S_{\mu_1},\F)\otimes\textnormal{H}_c^{*}(S_{\mu_2-\mu_1},\G).
\end{equation}
Assume $\G$ is a projective object in the Satake category. Then by Proposition $5.4$ and discussion in \S 2, we have ${^{p}}\textnormal{H}^{0}(\F\boxtimes\G)=\F\boxtimes\G$ and $(6.2)$ thus holds. Now we come back to the general situation. Note there is always a map from $\textnormal{H}_c^{*}(S_{\mu_1},\F)\otimes\textnormal{H}_c^{*}(S_{\mu_2-\mu_1},\G)$ to $\textnormal{H}_{c}^{*}(S_{\mu_1}\times S_{\mu_2-\mu_1}, {^{p}}\textnormal{H}^{0}(\F\boxtimes\G))$. In fact, let $a\in \textnormal{H}_c^{m}(S_{\mu_1},\F)$ and $b\in \textnormal{H}_c^{n}(S_{\mu_2-\mu_1},\G)$ be two arbitrary elements in the cohomology groups. Then $a$ and $b$ may be realized as 
$$
a:\underline{\Lambda}_{S_{\mu_1}}\rightarrow \F[m],\text{ and }b:\underline{\Lambda}_{S_{\mu_2-\mu_1}}\rightarrow \G[n].
$$
These two morphisms together induce a morphism
$$
a\boxtimes b:\underline{\Lambda}_{S_{\mu_1}\times S_{\mu_2-\mu_1}}\longrightarrow \F\boxtimes\G[m+n].
$$
Since $\F\boxtimes\G$ concentrates in non-positive perverse degrees, we can compose the above morphism with the natrual truncation morphism to get the following element
$$
a\boxtimes b:\underline{\Lambda}_{S_{\mu_1}\times S_{\mu_2-\mu_1}}\longrightarrow {^{p}}\textnormal{H}^{0}(\F\boxtimes G)[m+n]
$$
of $\textnormal{H}_{c}^{m+n}(S_{\mu_1}\times S_{\mu_2-\mu_1}, {^{p}}\textnormal{H}^{0}(\F\boxtimes\G))$. 
\\

By Corollary $5.2$, we can find a projective resolution $\cdots\rightarrow\mathcal{F}_2\rightarrow\mathcal{F}_1\rightarrow\mathcal{F}\rightarrow 0$ for $\mathcal{F}$. Since the functor $^p\textnormal{H}^0(\bullet\boxtimes\mathcal{G})$ is right exact, we get the following exact sequence
\begin{equation}
\cdots\longrightarrow {^{p}}\textnormal{H}^0(\mathcal{F}_2\boxtimes \mathcal{G})\longrightarrow {^{p}}\textnormal{H}^0(\mathcal{F}_1\boxtimes \mathcal{G})\longrightarrow {^{p}}\textnormal{H}^0(\mathcal{F}\boxtimes \mathcal{G})\longrightarrow 0.
\end{equation}
Recall the diagonal action of $\mathbb{G}_m$ on $Gr_G\times Gr_G$. We can apply the same argument as in Proposition $4.4$ to show that 
$$
\textnormal{H}^{*}(Gr_G\times Gr_G,\bullet)\simeq \oplus \textnormal{H}_c^{*}(S_{\mu_1}\times S_{\mu_2-\mu_1},\bullet)
$$
is an exact functor. As a result, the functor 
$$
\textnormal{H}_c^{*}(S_{\mu_1}\times S_{\mu_2-\mu_1},\bullet):P_{L^+G\times L^+G}(Gr_G\times Gr_G,\Lambda)\longrightarrow \textnormal{Mod}_{\Lambda}
$$
is also exact. 
Applying this functor to $(6.3)$ gives an exact sequence
$$
\cdots\rightarrow\textnormal{H}_c^*(S_{\mu_1},\F_2)\otimes \textnormal{H}_c^{*}( S_{\mu_2-\mu_1},\G)\rightarrow \textnormal{H}_c^*(S_{\mu_1},\F_1)\otimes \textnormal{H}_c^{*}(S_{\mu_2-\mu_1},\G)\rightarrow \textnormal{H}_c^*(S_{\mu_1}\times S_{\mu_2-\mu_1}, {^{p}}\textnormal{H}^{0}(\F\boxtimes\G))\rightarrow 0.
$$
Comparing the above exact sequence with the one obtained from tensoring the following exact sequence 
$$
\textnormal{H}_c^{*}(S_{\mu_1},\F_2)\longrightarrow\textnormal{H}_c^{*}(S_{\mu_1},\F_1)\longrightarrow\textnormal{H}_c^{*}(S_{\mu_1},\F)\longrightarrow 0
$$
with $\textnormal{H}_c^{*}(S_{\mu_2-\mu_1},\G)$, we complete the proof of $(6.2)$.
\\

Finally, consider Proposition $4.2$ together with $(6.2)$ and we conclude the proof of the lemma.
\end{proof}

The previous lemma motivates us to study the analogue of the total weight functor 
$$\textnormal{CT}':=\bigoplus_{\mu_1,\mu_2\in \mathbb{X}_{\bullet}}\textnormal{H}_c^*(\UU,\bullet):P_{L^+G}(Gr_G,\Lambda)\times P_{L^+G}(Gr_G,\Lambda)\longrightarrow \textnormal{Mod}(\mathbb{X}_{\bullet}).$$ 
Recall that we denote $F:\textnormal{Mod}(\mathbb{X}_{\bullet})\rightarrow \textnormal{Mod}_{\Lambda}$ to be the forgetful functor.

\begin{proposition}

There is a canonical isomorphism 
\begin{equation}
    \textnormal{H}^*(\GG,\F\tilde{\boxtimes}\G)\cong F\circ\textnormal{CT}'(\F\tilde{\boxtimes}\G):  P_{L^+G}(Gr_G,\Lambda)\times P_{L^+G}(Gr_G,\Lambda)\rightarrow \textnormal{Mod}_{\Lambda},
\end{equation}
for all $\F,G\in P_{L^{+}G}(Gr_G,\Lambda)$.
\end{proposition}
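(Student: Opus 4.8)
The plan is to mimic the proof of Proposition 4.4, using the $\mathbb{G}_m$-action on $\GG$ instead of the $\mathbb{G}_m$-action on $Gr_G$. First I would observe that the two decompositions of $\GG$ into the attracting loci $S_{\mu_1}\tilde{\times}S_{\mu_2-\mu_1}$ and the repelling loci $S_{\mu_1}^{-}\tilde{\times}S_{\mu_2-\mu_1}^{-}$ (indexed by pairs $(\mu_1,\mu_2)$, equivalently by the fixed points $\varpi^{\mu_1}\tilde{\times}\varpi^{\mu_2-\mu_1}$) are stratifications of $\GG$, compatible with the $L^+G$-action used to define perversity. Indeed, under the isomorphism $(pr_1,m):\GG\simeq Gr_G\times Gr_G$ these pull back from the two Iwasawa-type stratifications of $Gr_G\times Gr_G$ by the products $S_{\mu_1}\times S_{\mu_2}$ and $S^-_{\mu_1}\times S^-_{\mu_2}$, which are stratifications since each factor is. The key input that makes everything collapse in a single degree is Lemma 6.1: for $\F\tilde{\boxtimes}\G$ (which is perverse by Proposition 3.1, being the sheaf defining the convolution) the local cohomology $\textnormal{H}^*_{S_{\mu_1}^{-}\tilde{\times}S_{\mu_2-\mu_1}^{-}}(\GG,\F\tilde{\boxtimes}\G)$ and the compactly supported cohomology $\textnormal{H}_c^*(S_{\mu_1}\tilde{\times}S_{\mu_2-\mu_1},\F\tilde{\boxtimes}\G)$ agree and are concentrated in degree $(2\rho,\mu_2)$.

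Next I would run the spectral sequence argument verbatim. The stratification by the attracting loci gives a spectral sequence with $E_1$-terms $\textnormal{H}_c^k(S_{\mu_1}\tilde{\times}S_{\mu_2-\mu_1},\F\tilde{\boxtimes}\G)$ and abutment $\textnormal{H}^*(\GG,\F\tilde{\boxtimes}\G)$; by Lemma 6.1 the $E_1$-page is concentrated in a single cohomological degree for each stratum, so the spectral sequence degenerates at $E_1$. This produces a filtration $\textnormal{Fil}_{\geq(\mu_1,\mu_2)}$ on $\textnormal{H}^*(\GG,\F\tilde{\boxtimes}\G)$ indexed by $(\mathbb{X}_{\bullet}\times\mathbb{X}_{\bullet},\leq)$ (or really by the partial order pulled back to the index set of strata), defined as the kernel of restriction to the union of smaller strata, with associated graded $\bigoplus_{\mu_1,\mu_2}\textnormal{H}_c^{(2\rho,\mu_2)}(S_{\mu_1}\tilde{\times}S_{\mu_2-\mu_1},\F\tilde{\boxtimes}\G)$. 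Dually, the stratification by the repelling loci gives a second filtration $\textnormal{Fil}'_{<(\mu_1,\mu_2)}$ defined as the image of $\textnormal{H}^*_{S^-_{\leq\mu_1}\tilde{\times}S^-_{\leq\mu_2-\mu_1}}(\GG,\F\tilde{\boxtimes}\G)\to\textnormal{H}^*(\GG,\F\tilde{\boxtimes}\G)$. Exactly as in Proposition 4.4, the vanishing statements of Lemma 6.1 (the $\textnormal{H}_c$-side vanishes below $(2\rho,\mu_2)$ after d\'evissage, the local-cohomology side vanishes above it) show these two filtrations are complementary, and together they split $\textnormal{H}^*(\GG,\F\tilde{\boxtimes}\G)$ canonically as $\bigoplus_{\mu_1,\mu_2}\textnormal{H}_c^{(2\rho,\mu_2)}(S_{\mu_1}\tilde{\times}S_{\mu_2-\mu_1},\F\tilde{\boxtimes}\G)$, which is precisely $F\circ\textnormal{CT}'(\F\tilde{\boxtimes}\G)$ once one notes the summands with $k\neq(2\rho,\mu_2)$ vanish.

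I would be slightly careful about two technical points. First, the indexing poset: a priori the attracting loci are indexed by the $\mathbb{G}_m$-fixed points, which here are the $\varpi^{\mu_1}\tilde{\times}\varpi^{\mu_2-\mu_1}$, so the filtration is naturally indexed by pairs with a partial order induced from the closure relations; I would just transport Braden's formalism and the closure relations of Proposition 4.1 through the $\mathbb{G}_m$-equivariant isomorphism $(pr_1,m)$ rather than re-deriving closure relations on $\GG$ directly. Second, to even speak of the spectral sequence of the stratification converging one needs the strata to be reasonable and the number of relevant strata (those meeting the support of $\F\tilde{\boxtimes}\G$) to be finite, which holds because $\F\tilde{\boxtimes}\G$ is supported on some $Gr_{\leq\mu_{\bullet}}$ and the intersections with semi-infinite orbits are controlled by \cite[Corollary 2.8]{Zh2} together with the isomorphism $(4.2)$. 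The main obstacle, such as it is, is bookkeeping: making sure the hyperbolic-localization identification of Lemma 6.1 is compatible with the restriction maps defining the two filtrations, so that "complementary" really holds termwise; this is the same verification carried out in \cite[Theorem 3.5]{MV2} and \cite[\S 3]{Zh2} and requires no new idea, only that the $\mathbb{G}_m$-action on $\GG$ was set up compatibly, which was arranged above.
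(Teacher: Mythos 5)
Your proposal is correct and follows essentially the same route as the paper: stratify $\GG$ by the attracting loci $\UU$ and by the repelling loci $S^-_{\mu_1}\tilde{\times}S^-_{\mu_2-\mu_1}$, use Lemma 6.1 to collapse the spectral sequences at $E_1$, and observe that the resulting two filtrations are complementary, yielding the direct sum decomposition that is $F\circ\textnormal{CT}'$. The extra bookkeeping you flag (the indexing poset transported via $(pr_1,m)$ and finiteness of relevant strata) is sound but is left implicit in the paper's proof.
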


\begin{proof}
The convolution Grassmannian $\GG$ admits a stratification by the convolution of semi-infinite orbits 
$$
\{\UU \vert \mu_1,\mu_2\in \mathbb{X}_{\bullet}\}.
$$ 
For any $\F,\G\in P_{L^+G}(Gr_G,\Lambda)$,  there is a spectral sequence with $E_1$-terms $\textnormal{H}_c^{*}(S_{\mu_1}\TT S_{\mu_2-\mu_1},\F\tilde{\boxtimes}\G)$ and abutment $\textnormal{H}^{*}(\GG,\F\tilde{\boxtimes}\G)$. By the above lemma, it degenerates at the $E_1$ page. Hence, there exists a filtration

$$\textnormal{Fil}_{\geq \mu_1,\mu_2}\textnormal{H}^*(\mathcal{F}\tilde{\boxtimes}\G):=\textnormal{ker}(\textnormal{H}^*(\mathcal{F}\tilde{\boxtimes}\G)\rightarrow \textnormal{H}^*(S_{<\mu_1,<\mu_2},\mathcal{F}\tilde{\boxtimes}\G)),$$ where $S_{<\mu_1,<\mu_2}:=\displaystyle \mathop{\cup}_{\nu_1<\mu_1,\nu_1+\nu_2<\mu_2}S_{\nu_1}\tilde{\times} S_{\nu_2-\nu_1}$.
It is clear that the associated graded of this filtration is $\oplus_{\mu_1,\mu_2\in \mathbb{X}_{\bullet}}\textnormal{H}_c^*(\UU,\mathcal{F}\tilde{\boxtimes}\G)$. 
\\

Similarly, consider the stratification $\{S_{\mu_1}^{-}\tilde{\times} S_{\mu_2-\mu_1}^{-}\vert, \mu_1,\mu_2\in \mathbb{X}_{\bullet}\}$ of $Gr_G\tilde{\times}Gr_G$. It also induces a filtration

$$\textnormal{Fil}'_{<\mu_1,\mu_2} \textnormal{H}^*(\mathcal{F}\tilde{\boxtimes}\G):=\textnormal{Im}(\textnormal{H}^*_{T_{<\mu_1,<\mu_2}}(\mathcal{F}\tilde{\boxtimes}\G)\rightarrow \textnormal{H}^*(\mathcal{F}\tilde{\boxtimes}\G))$$
on $\textnormal{H}^*(Gr_G\tilde{\times}Gr_G,\mathcal{F}\tilde{\boxtimes}\mathcal{G})$ where $T_{<\mu_1,<\mu_2}:=\displaystyle \mathop{\cup}_{\nu_1<\mu_1,\nu_1+\nu_2<\mu_2}T_{\nu_1}\tilde{\times} T_{\nu_2-\nu_1}$
The two filtrations are complementary to each other by Lemma $6.1$ and the proposition is proved.
\end{proof}

\begin{proposition}
Under the canonical isomorphism 
$$
\textnormal{H}^{*}(Gr_G,\F\star\G)\cong \textnormal{H}^{*}(Gr_G\tilde{\times} Gr_G,\F\tilde{\boxtimes}\G),
$$
the weight functor decomposition of the hypercohomology functor obtained in Proposition $4.4$ and the analogous decomposition given by Proposition $6.2$ are compatible. More precisely, for any $\F,\G \in P_{L^+G}(Gr_G,\Lambda)$ any $\mu_2\in \mathbb{X}_{\bullet}$, we have the following isomorphism
\begin{equation}
   \textnormal{H}_c^*(S_{\mu_2},\F\star\G)\simeq \bigoplus_{\mu_1}\textnormal{H}_c^*(\UU, \F\tilde{\boxtimes}\G),
\end{equation}
  which identifies both sides as direct summands of the direct sum decomposition of $\textnormal{H}^{*}(Gr_G,\F\star\G)$ and $\textnormal{H}^{*}(Gr_G\tilde{\times} Gr_G,\F\tilde{\boxtimes}\G)$, respectively.
\end{proposition}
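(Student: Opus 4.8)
The plan is to prove the identity (6.6) by showing that both decompositions originate from a single geometric source: the $\mathbb{G}_m$-action on $Gr_G\tilde{\times}Gr_G$ and the induced action on $Gr_G$, which are compatible under the convolution map $m$. First I would observe that by the definition of the $\mathbb{G}_m$-action on $\GG$ via $t\cdot(g_1\TT g_2)=(tg_1\TT g_1^{-1}g_2)$, the convolution map $m:\GG\to Gr_G$ sending $(g_1\TT g_2)\mapsto g_1g_2L^+G$ is \emph{not} $\mathbb{G}_m$-equivariant for this action; rather it is equivariant for the action where $\mathbb{G}_m$ acts on the target $Gr_G$ via the cocharacter $L^+(2\rho^\vee)$. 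Wait---more carefully, under $(pr_1,m):\GG\simeq Gr_G\times Gr_G$ the action becomes the diagonal action, so $m$ corresponds to the second projection $Gr_G\times Gr_G\to Gr_G$, which \emph{is} $\mathbb{G}_m$-equivariant. Hence $m$ carries the attracting locus $S_{\mu_1}\TT S_{\mu_2-\mu_1}$ (sitting over $S_{\mu_1}\times S_{\mu_2}$ under $(pr_1,m)$) into the attracting locus $S_{\mu_2}$ of $Gr_G$, and similarly for repelling loci. This is the key geometric input.

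Next I would make the filtration comparison precise. On the $Gr_G$ side, Proposition 4.4 gives the filtration $\mathrm{Fil}_{\geq\mu}\textnormal{H}^*(Gr_G,\F\star\G)=\ker(\textnormal{H}^*(Gr_G,\F\star\G)\to\textnormal{H}^*(S_{<\mu},\F\star\G))$ with associated graded $\bigoplus_\mu\textnormal{H}_c^{(2\rho,\mu)}(S_\mu,\F\star\G)$; on the $\GG$ side, Proposition 6.2 gives $\mathrm{Fil}_{\geq\mu_1,\mu_2}\textnormal{H}^*(\GG,\F\TT\G)$ with associated graded $\bigoplus_{\mu_1,\mu_2}\textnormal{H}_c^*(S_{\mu_1}\TT S_{\mu_2-\mu_1},\F\TT\G)$. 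Under the canonical isomorphism $\textnormal{H}^*(Gr_G,\F\star\G)\cong\textnormal{H}^*(\GG,\F\TT\G)$ (which is just $Rm_!$ applied and the fact that $m$ is proper on supports, combined with the definition of $\star$), I would show that the pullback of $S_{<\mu_2}\subset Gr_G$ along $m$ is precisely $\bigcup_{\nu_1,\, \nu_1+\nu_2<\mu_2}S_{\nu_1}\TT S_{\nu_2-\nu_1}=S_{<\mu_2}^{\GG}$ (in the notation implicit in Proposition 6.2, where one only constrains the second index). Therefore $m^*$ identifies $\mathrm{Fil}_{\geq\mu_2}\textnormal{H}^*(Gr_G,\F\star\G)$ with the coarser filtration on $\textnormal{H}^*(\GG,\F\TT\G)$ obtained by grouping the $(\mu_1,\mu_2)$-filtration over all $\mu_1$ with fixed $\mu_2$. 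Passing to associated graded pieces yields (6.6), with the left side a single graded piece and the right side the direct sum of graded pieces over $\mu_1$.

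To make the degree-matching work I would invoke Lemma 6.1, which states $\textnormal{H}_c^*(S_{\mu_1}\TT S_{\mu_2-\mu_1},\F\TT\G)$ is concentrated in degree $(2\rho,\mu_2)$, exactly matching the degree $(2\rho,\mu_2)$ in which $\textnormal{H}_c^*(S_{\mu_2},\F\star\G)$ is concentrated by Proposition 4.2. This consistency is what allows the two spectral sequences (one indexed by $\mathbb{X}_\bullet$, one by $\mathbb{X}_\bullet\times\mathbb{X}_\bullet$) to be compared term by term: the $E_1$-page of the $Gr_G$ spectral sequence is obtained from that of the $\GG$ spectral sequence by summing over the fibre direction $\mu_1$, and both degenerate at $E_1$. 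I would also need to check that the ``repelling'' filtrations $\mathrm{Fil}'$ match up compatibly under $m$, using that $m$ sends $S_{\mu_1}^-\TT S_{\mu_2-\mu_1}^-$ into $S_{\mu_2}^-$; since by Lemma 6.1 and Proposition 4.2 the attracting and repelling filtrations are complementary on both sides, matching one pair of filtrations under $m^*$ automatically matches the complementary pair, and the decomposition (6.6) respects the ambient direct sum decompositions as claimed.

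The main obstacle I anticipate is verifying cleanly that $m$ intertwines the two $\mathbb{G}_m$-actions in the precise sense needed---i.e., that the preimage under $m$ of the semi-infinite orbit $S_{\mu_2}\subset Gr_G$ is exactly $\bigsqcup_{\mu_1}S_{\mu_1}\TT S_{\mu_2-\mu_1}$ as locally closed subspaces (not merely set-theoretically, but compatibly with the perverse/constructible structures so that the base-change and $Rm_!$ arguments go through). This requires unwinding the moduli description of $\GG$ and the twisted-product structure of $S_{\mu_1}\TT S_{\mu_2-\mu_1}$, together with the isomorphism $m:S_{\mu_\bullet}\xrightarrow{\sim}S_{\mu_1}\times S_{\mu_1+\mu_2}\times\cdots$ recorded in \S 4; once that identification is in hand, the rest is a formal manipulation of the two degenerate spectral sequences and proper base change.
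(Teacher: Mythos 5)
Your proposal captures the essential ideas and is broadly sound, but it follows a slightly different route than the paper, and the ``repelling side'' of your argument differs in a subtle geometric detail worth flagging. The paper does \emph{not} compare the two global filtrations on $\textnormal{H}^*(Gr_G,\F\star\G)$ and $\textnormal{H}^*(\GG,\F\tilde{\boxtimes}\G)$ term-by-term as you propose; instead it localizes the entire argument to the single subspace $m^{-1}(S_{\mu_2})\simeq Gr_G\times S_{\mu_2}$ and runs a \emph{fresh} hyperbolic localization argument there. On $m^{-1}(S_{\mu_2})$ the fixed points are $\{(\varpi^{\mu_1},\varpi^{\mu_2})\}$, the attracting loci are indeed $S_{\mu_1}\tilde{\times}S_{\mu_2-\mu_1}$ as you say, but the repelling loci are $T_{\mu_1,\mu_2}:=(pr_1,m)^{-1}(S^-_{\mu_1}\times\{\varpi^{\mu_2}\})$ --- the intersections of the repelling loci of $\GG$ with the subspace $m^{-1}(S_{\mu_2})$ --- and \emph{not} the full $S^-_{\mu_1}\tilde{\times}S^-_{\mu_2-\mu_1}$ that you invoke. (Note $S^-_{\mu_1}\tilde{\times}S^-_{\mu_2-\mu_1}$ is not even contained in $m^{-1}(S_{\mu_2})$: under $(pr_1,m)$ it maps to $S^-_{\mu_1}\times S^-_{\mu_2}$, and $S^-_{\mu_2}\cap S_{\mu_2}=\{\varpi^{\mu_2}\}$.) The two filtrations of $\textnormal{H}_c^*(m^{-1}(S_{\mu_2}),\F\tilde{\boxtimes}\G)$, by attracting strata and by the $T_{\mu_1,\mu_2}$, both degenerate at $E_1$ and are complementary by the hyperbolic-localization isomorphism, yielding the decomposition of $\textnormal{H}_c^*(S_{\mu_2},\F\star\G)$ directly after an implicit appeal to proper base change.

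Your variant is a ``globalized'' version: you use proper base change to identify $m^{-1}(S_{<\mu_2})$ (resp.\ $m^{-1}(S^-_{\leq\mu_2})=Gr_G\times S^-_{\leq\mu_2}$) with the appropriate closed union of attracting (resp.\ repelling) strata of $\GG$, and then argue that the attracting/repelling filtrations of Proposition 4.4 pull back under $m^*$ to the corresponding ``coarsened'' filtrations of Proposition 6.2 obtained by summing over the $\mu_1$-direction. This can be made to work, and the geometric identifications you list are correct; but it silently relies on the compatibility of the direct-sum decomposition of Proposition 6.2 with arbitrary closed unions of strata (so that kernels of coarse restrictions really equal sums of the relevant graded pieces), which needs to be spelled out. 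The paper's choice to work on $m^{-1}(S_{\mu_2})$ and re-run Braden's theorem there trades these bookkeeping verifications of coarsened poset-indexed filtrations for a one-paragraph self-contained argument, at the cost of introducing the auxiliary repelling loci $T_{\mu_1,\mu_2}$. Either route is legitimate; just make sure, if you pursue yours, that you explicitly justify why the coarse kernel $\ker(\textnormal{H}^*(\GG)\to\textnormal{H}^*(m^{-1}(S_{<\mu_2})))$ equals $\bigoplus_{\nu_2\geq\mu_2,\,\mu_1}V_{\mu_1,\nu_2}$, and state the precise closure relations being used when you pass between $\nu_1<\mu_1$, $\nu_2<\mu_2$ and the componentwise partial order on pairs.
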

\begin{proof}
Consider the following commutative diagram
$$
\begin{tikzcd}[row sep=huge]
m^{-1}(S_{\mu_2}) \arrow[d, "m_1"]  \arrow[r,hook, "\tilde{f}^+"] & Gr_{G}\tilde{\times}Gr_{G} \arrow[d, "m"]
  \\
S_{\mu_2}   \arrow[r,hook, "f^+"]
 & Gr_{G} .
 \end{tikzcd}
 $$
Here, $f$ and $\tilde{f}^+$ are the natural locally closed embeddings. The morphism $m_1$ is the convolution morphism $m$ restricted to $m^{-1}(S_{\mu_2})$.
\\

Consider the $\mathbb{G}_m$-equivariant isomorphism $(pr_1,m):Gr_G\tilde{\times} Gr_G\simeq Gr_G\times Gr_G$. The preimage of $S_{\mu_2}$ along $m$ can be described as  
$$
(pr_1,m):m^{-1}(S_{\mu_2})\simeq Gr_G\times S_{\mu_2}.
$$
\\
As before, the diagonal action of $\mathbb{G}_m$ on $Gr_G\times S_{\mu_2}$  induces a $\mathbb{G}_m$- action on $m^{-1}(S_{\mu_2})$ with invariant loci $\{(\varpi^{\mu_1},\varpi^{\mu_2})\vert \mu_1\in \mathbb{X}_{\bullet}\}$. Via the isomorphism $(pr_1,m)^{-1}$, the attracting and repelling loci for $(\varpi^{\mu_1}\TT\varpi^{\mu_2-\mu_1})$  in $m^{-1}(S_{\mu_2})$ are 
$$
S_{\mu_1}\tilde{\times} S_{\mu_2-\mu_1},
$$ 
and 
$$
T_{\mu_1,\mu_2}:=(pr_1,m)^{-1}(S_{\mu_1}^{-}\times \{\varpi^{\mu_2}\})
$$
respectively.
Applying the hyperbolic localization theorem to $m^{-1}(S_{\mu_2})$, we have the following isomorphism
\begin{equation}
\textnormal{H}_c^{*}(S_{\mu_1}\tilde{\times} S_{\mu_2-\mu_1},\mathcal{F}\tilde{\boxtimes}\mathcal{G})\simeq \textnormal{H}_{T_{\mu_1,\mu_2}}^{*}(\mathcal{F}\tilde{\boxtimes}\mathcal{G}).
\end{equation}
By Lemma $6.1$, the above cohomology groups concentrate in a single degree. 
\\

Filtering the space $m^{-1}(S_{\mu_2})$ by $\{S_{\mu_1}\tilde{\times}S_{\mu_2-\mu_1}\vert \mu_1\in\mathbb{X}_{\bullet}\}$, we get a spectral sequence with $E_1$-terms $\textnormal{H}_c^{*}(S_{\mu_1}\tilde{\times}S_{\mu_2-\mu_1},\F\tilde{\boxtimes}\G)$. As noticed in Lemma $6.1$, this spectral sequence degenerates at $E_1$-page. Then, there exists a filtration
$$
\textnormal{Fil}_{\mu_1,\mu_2'}:=\textnormal{Ker}(\textnormal{H}^{*}(m^{-1}(S_{\mu_2}),\F\tilde{\boxtimes}\G)\rightarrow \textnormal{H}^{*}(\cup_{\mu_1'<\mu_1} S_{\mu_1'}\tilde{\times}S_{\mu_2-\mu_1'},\F\tilde{\boxtimes}\G))
$$ 
with associated graded
$$
\bigoplus_{\mu_1}\textnormal{H}_c^{*}(\UU,\F\tilde{\boxtimes}\G).
$$
Similarly, filtering $m^{-1}(S_{\mu_2})$ by $\{T_{\mu_1,\mu_2}\vert \mu_1\in\mathbb{X}_{\bullet}\}$, we get an induced spectral sequence with $E_1$-terms $\textnormal{H}^{*}_{T_{\mu_1,\mu_2}}(\F\tilde{\boxtimes}\G)$. This spectral sequence also degenerates at the $E_1$-page and there is an induced filtration
$$
\textnormal{Fil}'_{\mu_1,\mu_2}:=\textnormal{Im}(\textnormal{H}_{T_{<\mu_1,\mu_2}}^{*}(\F\tilde{\boxtimes}\G)\rightarrow\textnormal{H}^{*}(\F\tilde{\boxtimes}\G)),
$$
where $T_{<\mu_1,\mu_2}:=\cup_{\mu_1'<\mu_1}T_{\mu_1',\mu_2}$. The two filtrations are complementary to each other by $(6.6)$ and together defines the decomposition $\textnormal{H}_c^*(S_{\mu_2},\F\star\G)\simeq \bigoplus_{\mu_1}\textnormal{H}_c^*(\UU, \F\tilde{\boxtimes}\G)$.

\end{proof}
By the above proposition, Proposition $6.2$ induces a monoidal structure of the functor $\textnormal{H}^{*}$.
\begin{proposition}
The hypercohomology functor $\textnormal{H}^*(Gr_G,\bullet):P_{L^+G}(Gr_G,\Lambda)\rightarrow \textnormal{Mod}_{\Lambda}$ is a monoidal functor. In addition, the obtained monoidal structure is compatible with the weight functor decomposition established in Proposition $4.2$ 
\end{proposition}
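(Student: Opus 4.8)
The plan is to build the tensor constraint on $\textnormal{H}^{*}$ out of the isomorphisms already proved in this section and then check the monoidal axioms; the compatibility with weights will be essentially a restatement of Lemma $6.1$ and Proposition $6.3$. For $\F,\G\in P_{L^+G}(Gr_G,\Lambda)$ I define the constraint $c_{\F,\G}$ as the composite
\begin{align*}
\textnormal{H}^{*}(Gr_G,\F)\otimes_{\Lambda}\textnormal{H}^{*}(Gr_G,\G)
&\cong\bigoplus_{\mu_1,\mu_2\in\mathbb{X}_{\bullet}}\textnormal{H}_c^{*}(S_{\mu_1},\F)\otimes_{\Lambda}\textnormal{H}_c^{*}(S_{\mu_2-\mu_1},\G)\\
&\cong\bigoplus_{\mu_1,\mu_2\in\mathbb{X}_{\bullet}}\textnormal{H}_c^{*}(\UU,\F\tilde{\boxtimes}\G)\\
&\cong\textnormal{H}^{*}(\GG,\F\tilde{\boxtimes}\G)\cong\textnormal{H}^{*}(Gr_G,\F\star\G),
\end{align*}
where the first isomorphism is Proposition $4.4$ applied to $\F$ and to $\G$ (re-indexing the grading of the second factor by $\mu_2:=\mu_1+\nu$), the second is Lemma $6.1$, the third is Proposition $6.2$, and the fourth is induced by $\F\star\G=Rm_!(\F\tilde{\boxtimes}\G)$ together with $R\Gamma(Gr_G,Rm_*(\bullet))=R\Gamma(\GG,\bullet)$ and $Rm_!=Rm_*$ by the perfect properness of the convolution morphism $m$ on Schubert subspaces. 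Each of the four maps is an isomorphism, and each is natural in the pair $(\F,\G)$: for the first, third, and fourth this is part of the cited statements, while for Lemma $6.1$ one observes that the map constructed there is the one induced on cohomology by the external pairings $a\boxtimes b$, which is visibly functorial before being shown to be an isomorphism. Hence $c_{\F,\G}$ is a natural isomorphism.

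\emph{Unit and associativity.} The unit object of $(\textnormal{Sat}_{G,\Lambda},\star)$ is $\ic_0=\underline{\Lambda}$ supported at the base point $Gr_0$, with $\textnormal{H}^{*}(Gr_G,\ic_0)\cong\Lambda$ in degree $0$; unwinding $c_{\ic_0,\F}$ and $c_{\F,\ic_0}$ (only the summand $\mu_1=0$, resp.\ $\mu_1=\mu_2$, contributes, since $\ic_0$ lives on $S_0$) identifies them with the canonical isomorphisms $\Lambda\otimes\textnormal{H}^{*}(\F)\cong\textnormal{H}^{*}(\F)$ and $\textnormal{H}^{*}(\F)\otimes\Lambda\cong\textnormal{H}^{*}(\F)$, so the triangle identities hold. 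For the pentagon I would repeat the hyperbolic-localization analysis of this section for the diagonal $\mathbb{G}_m$-action on the triple convolution Grassmannian $Gr_G\TT Gr_G\TT Gr_G$: the analogues of Lemma $6.1$ and Proposition $6.2$ produce a canonical identification of $\textnormal{H}^{*}(Gr_G\TT Gr_G\TT Gr_G,\F_1\tilde{\boxtimes}\F_2\tilde{\boxtimes}\F_3)$ with $\bigoplus\textnormal{H}_c^{*}(S_{\mu_1},\F_1)\otimes\textnormal{H}_c^{*}(S_{\mu_2-\mu_1},\F_2)\otimes\textnormal{H}_c^{*}(S_{\mu_3-\mu_2},\F_3)$, and the analogue of Proposition $6.3$ applied to the two convolution maps collapsing the first two, resp.\ the last two, factors shows that both composites $c_{\F_1\star\F_2,\F_3}\circ(c_{\F_1,\F_2}\otimes\textnormal{id})$ and $c_{\F_1,\F_2\star\F_3}\circ(\textnormal{id}\otimes c_{\F_2,\F_3})$ coincide with the map induced by this common decomposition and the strict associativity of $\otimes_{\Lambda}$, using the already-established associativity of $\star$. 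Thus $\textnormal{H}^{*}$ equipped with the constraint $c$ is a monoidal functor.

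\emph{Compatibility with weights.} This is immediate from the construction. By Proposition $6.3$ the summand $\textnormal{H}_c^{*}(S_{\mu_2},\F\star\G)=\textnormal{CT}_{\mu_2}(\F\star\G)$ of $\textnormal{H}^{*}(Gr_G,\F\star\G)$ corresponds under $c_{\F,\G}$ to $\bigoplus_{\mu_1}\textnormal{H}_c^{*}(\UU,\F\tilde{\boxtimes}\G)$, and by Lemma $6.1$ this is $\bigoplus_{\mu_1}\textnormal{CT}_{\mu_1}(\F)\otimes_{\Lambda}\textnormal{CT}_{\mu_2-\mu_1}(\G)$. Hence $c_{\F,\G}$ respects the $\mathbb{X}_{\bullet}$-gradings of Proposition $4.4$, and in fact upgrades $\textnormal{CT}$ to a monoidal functor to $\textnormal{Mod}(\mathbb{X}_{\bullet})$, with the convolution tensor product of graded modules, lifting $\textnormal{H}^{*}$.

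The delicate point is the associativity: one must extend the hyperbolic-localization bookkeeping of Lemma $6.1$ and Proposition $6.3$ to the triple convolution and verify that the resulting filtrations are functorial along the convolution maps $m_i$, which rests on the functoriality of Braden's hyperbolic localization theorem with respect to $\mathbb{G}_m$-equivariant morphisms. The remaining verifications are formal manipulations of the gradings.
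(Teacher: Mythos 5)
Your proposal is correct and takes essentially the same route as the paper: you build the constraint $c_{\F,\G}$ from the chain of isomorphisms in Proposition $4.4$, Lemma $6.1$, and Proposition $6.2$, use Proposition $6.3$ for compatibility with the weight decomposition, and check the pentagon via the analogous hyperbolic-localization analysis on the triple convolution Grassmannian $Gr_G\TT Gr_G\TT Gr_G$. The additional remarks on naturality, the unit triangle, and the identification $Rm_!\cong Rm_*$ by perfect properness are correct supporting details that the paper leaves implicit.
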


\begin{proof}
Recall for $\mathcal{F},\mathcal{G}\in P_{L^+G}(Gr_G,\Lambda)$, the convolution product $\F\star \G$ is defined as $\F\star \G=Rm_!(\F\tilde{\boxtimes}\G)$. Then by Lemma $6.1$ and Proposition $6.2$, there are canonical isomorphisms
\begin{align*}
   & \textnormal{H}^{*}(Gr_G,\F\star\G)\\
     \cong & \textnormal{H}^{*}(\GG,\F\tilde{\boxtimes}\G)\\
    \cong & \bigoplus_{\mu_1,\mu_2}\textnormal{H}_c^{*}(\UU,\F\tilde{\boxtimes}\G)\\
    \cong & \bigoplus_{\mu_1,\mu_2}\Big (\textnormal{H}_c^{*}(S_{\mu_1},\F)\otimes\textnormal{H}_c^{*}(S_{\mu_2-\mu_1},\G)\Big )\\
    \cong & \Big( \bigoplus_{\mu_1}\textnormal{H}_c^{*}(S_{\mu_1},\F)\Big )\otimes \Big (\bigoplus_{\mu_2}\textnormal{H}_c^{*}(S_{\mu_2},\G)\Big )\\
    \cong & \textnormal{H}^{*}(\F)\otimes \textnormal{H}^{*}(\G).
\end{align*}
Note by Proposition $4.4$, we have the decomposition of the total weight functor into direct sum of weight functors $\textnormal{H}^*(Gr_G,\F\star\G)\simeq \oplus_{\lambda}\textnormal{H}_c^*(S_{\lambda},\F\star\G)$. Proposition $6.3$ then shows the monoidal structure obtained above is compatible with the weight functor decomposition.
\\

Finally, we need to show that the monoidal structure of $\textnormal{H}^{*}$ is compatible with the associativity constraint. This can be proved by considering the $\mathbb{G}_m$- action on $Gr_G\tilde{\times}Gr_G\tilde{\times} Gr_G$ induced by the diagonal action of $\mathbb{G}_m$ on $Gr_G\times Gr_G\times Gr_G$ via the isomorphism 
$$
(m_1,m_2,m_3)^{-1}: Gr_G\times Gr_G\times Gr_G\simeq Gr_G\tilde{\times}Gr_G\tilde{\times} Gr_G.
$$ 
Note in this case we can still split the intersection $(S_{\nu_1}\tilde{\times}S_{\nu_2}\tilde{\times}S_{\nu_3})\cap (Gr_{\leq\mu_1}\tilde{\times}Gr_{\leq\mu_2}\tilde{\times}Gr_{\leq\mu_3})$ by $(4.1)$. This allows us to apply the hyperbolic localization theorem and a similar spectral sequence argument as before. We obtain the desired compatibility property and the proposition is thus proved.

\end{proof}

With the monoidal structure of $\textnormal{H}^{*}$ established above, we are now ready to prove the following results.
\begin{proposition}
For any $\mathcal{F}\in \textnormal{Sat}_{G,\Lambda}$, the functors $(\bullet)\star\mathcal{F}$ and $\mathcal{F}\star(\bullet)$ are both right exact. If in addition $\mathcal{F}$ is a projective object, the these functors are exact.
\end{proposition}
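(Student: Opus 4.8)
The plan is to decompose the functor $(\bullet)\star\mathcal F=Rm_!\bigl((\bullet)\tilde\boxtimes\mathcal F\bigr)$ into its three constituent operations — external product followed by ${}^p\mathrm H^0$, descent along the $L^+G$-torsor, and $Rm_!$ — and to read off the relevant (left/right) exactness one operation at a time. The key structural facts are that $Rm_!$ is \emph{exact} here, that external product with a fixed perverse sheaf is triangulated and right $t$-exact, and that ${}^p\mathrm H^0$ of a right $t$-exact functor is right exact on hearts; the extra input needed for the projective case is that $\mathcal A\boxtimes\mathcal F$ is genuinely perverse when $\mathcal F$ is projective.

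First I would record that $Rm_!$ is perverse $t$-exact on the relevant Schubert-supported subcategories. Indeed $m$ is perfectly proper and, by \cite[Proposition 2.3]{Zh2}, stratified semismall with respect to the stratification $(2.1)$, so $Rm_!=Rm_*$ is $t$-exact; this is exactly the input already used in the proof of Proposition 3.1 (cf.\ \cite[Lemma 4.3]{MV2}). A $t$-exact functor is in particular both left and right exact on hearts.

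Next, for fixed $\mathcal F\in\mathrm{Sat}_{G,\Lambda}$ the external product $(\bullet)\boxtimes\mathcal F\colon D^b_c(Gr_G,\Lambda)\to D^b_c(Gr_G\times Gr_G,\Lambda)$ is triangulated in its first argument, and since $\mathcal F\in{}^pD^{\le 0}$ and ${}^pD^{\le 0}\boxtimes{}^pD^{\le 0}\subseteq{}^pD^{\le 0}$ it is right $t$-exact. Composing with ${}^p\mathrm H^0$ therefore yields a right exact functor $P_{L^+G}(Gr_G,\Lambda)\to P_{L^+G}(Gr_G\times Gr_G,\Lambda)$: applying ${}^p\mathrm H^{\ast}$ to the distinguished triangle attached to a short exact sequence and using that ${}^p\mathrm H^1$ of anything in ${}^pD^{\le 0}$ vanishes produces precisely the cokernel sequence. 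Since $p$ and $q$ are $L^+G$-torsors, the (suitably normalized) pullbacks $p^{\ast},q^{\ast}$ are exact and $q^{\ast}$ is faithful, so the descent relation $q^{\ast}(\mathcal A\tilde\boxtimes\mathcal F)\simeq p^{\ast}({}^p\mathrm H^0(\mathcal A\boxtimes\mathcal F))$ forces $\mathcal A\mapsto\mathcal A\tilde\boxtimes\mathcal F$ to be right exact as well. Composing with the exact functor $Rm_!$ shows $(\bullet)\star\mathcal F$ is right exact; the functor $\mathcal F\star(\bullet)$ is handled identically, exchanging the two factors. (When $\Lambda=\mathbb F_\ell$ is a field the external product of perverse sheaves is automatically perverse, so there $(\bullet)\star\mathcal F$ is in fact exact for every $\mathcal F$; the content below is thus really about $\Lambda=\mathbb Z_\ell$.)

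Finally, assume $\mathcal F$ is projective. Here I would invoke Proposition 5.4, which gives that $\mathrm H^{\ast}(Gr_G,\mathcal F)$ is a free $\mathbb Z_\ell$-module, hence flat. By the flatness criterion recalled after the definition of $\star$ in \S3 — namely \cite[Lemma 4.1]{MV2} — this implies $\mathcal A\boxtimes\mathcal F$ is perverse for \emph{every} $\mathcal A\in P_{L^+G}(Gr_G,\Lambda)$. Thus $(\bullet)\boxtimes\mathcal F$ is triangulated, right $t$-exact, and carries the heart into the heart, hence automatically $t$-exact (a right $t$-exact triangulated functor preserving the heart preserves ${}^pD^{\ge 0}$, by the usual dévissage writing an object of ${}^pD^{\ge 0}$ as an iterated extension of nonnegative shifts of its perverse cohomologies). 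Therefore ${}^p\mathrm H^0(\mathcal A\boxtimes\mathcal F)=\mathcal A\boxtimes\mathcal F$ and $\mathcal A\mapsto\mathcal A\tilde\boxtimes\mathcal F$ is exact; composing with the exact $Rm_!$ we conclude that $(\bullet)\star\mathcal F$ — and, symmetrically, $\mathcal F\star(\bullet)$ — is exact. I expect the one genuinely delicate point to be the perversity of $\mathcal A\boxtimes\mathcal F$ over $\mathbb Z_\ell$, i.e.\ the vanishing of the $\mathrm{Tor}$-contributions that a priori create a nonzero ${}^p\mathrm H^{-1}(\mathcal A\boxtimes\mathcal F)$; this is exactly where the freeness from Proposition 5.4, propagated to the costalk/weight-functor data of $\mathcal F$, is used, and it is the substance of \cite[Lemma 4.1]{MV2}. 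Everything else is formal bookkeeping with $t$-exactness.
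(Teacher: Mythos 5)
Your argument is correct, but it takes a genuinely different route from the paper's own proof. The paper proves the proposition as a \emph{corollary} of the monoidal structure on $\textnormal{H}^*$ established in Proposition~6.4: it takes hypercohomology of a short exact sequence (exact by Proposition~4.4), tensors the resulting right-exact sequence with $\textnormal{H}^*(\mathcal F)$, identifies the result with $\textnormal{H}^*(\mathcal G'\star\mathcal F)\to\textnormal{H}^*(\mathcal G\star\mathcal F)\to\textnormal{H}^*(\mathcal G''\star\mathcal F)\to 0$ via the monoidal isomorphism, and then uses that an exact, faithful functor reflects exactness to pull the conclusion back to the Satake category. The projective case then follows because freeness of $\textnormal{H}^*(\mathcal F)$ (Proposition~5.4) makes $(\bullet)\otimes\textnormal{H}^*(\mathcal F)$ exact. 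You instead argue \emph{inside} the Satake category, decomposing $(\bullet)\star\mathcal F$ into the external product followed by ${}^p\textnormal{H}^0$, descent along the $L^+G$-torsor, and $Rm_!$, and reading off $t$-exactness one step at a time from properness plus stratified semismallness of $m$ and from the amplitude bound on $\boxtimes$ over $\mathbb Z_\ell$. Both proofs ultimately hinge on the same flatness input in the projective case (Proposition~5.4 together with the perversity criterion of \cite[Lemma~4.1]{MV2}), but yours is logically lighter: it does not require the monoidal structure of $\textnormal{H}^*$, only the semismallness already used in Proposition~3.1, and so it could in principle be placed immediately after \S5, before any of the hyperbolic-localization machinery of \S6 is available. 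The paper's version, by contrast, is shorter once \S6 has been done and makes the interaction between $\star$ and $\textnormal H^*$ explicit, which is the form in which the statement is actually used later (in \S7 for the Tannakian construction). One small point worth spelling out in your writeup: the $t$-exactness of $Rm_!=Rm_*$ is only asserted for complexes constructible with respect to the stratification $(2.1)$, which is where the objects ${}^p\textnormal H^0(\mathcal A\boxtimes\mathcal F)\tilde{\phantom{\boxtimes}}$ and their perverse subquotients actually live; your phrase ``on the relevant Schubert-supported subcategories'' gestures at this, but it deserves to be stated since semismallness is a stratified, not absolute, notion.
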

\begin{proof}
Let 
\begin{equation}
0\rightarrow\mathcal{G}'\rightarrow\mathcal{G}\rightarrow\mathcal{G}''\rightarrow 0
\end{equation}
be an exact sequence in $\textnormal{Sat}_{G,\Lambda}$. By Proposition $4.4$, taking global cohomology gives an exact sequence
\begin{equation}
    \textnormal{H}^{*}(\mathcal{G}')\longrightarrow\textnormal{H}^{*}(\mathcal{G})\longrightarrow\textnormal{H}^{*}(\mathcal{G}'')\longrightarrow 0.
\end{equation}
Tensoring $(6.8)$ with $\textnormal{H}^{*}(\F)$ gives exact sequence 
\begin{equation}
    \textnormal{H}^{*}(\mathcal{G}')\otimes\textnormal{H}^{*}(\mathcal{F})\longrightarrow\textnormal{H}^{*}(\mathcal{G})\otimes\textnormal{H}^{*}(\mathcal{F})\longrightarrow\textnormal{H}^{*}(\mathcal{G}'')\otimes\textnormal{H}^{*}(\mathcal{F})\longrightarrow 0.
\end{equation}
By proposition $6.3$, $(6.9)$ is canonically isomorphic to the following sequence
\begin{equation}
    \textnormal{H}^{*}(\G'\star\F)\longrightarrow\textnormal{H}^{*}(\G\star\F)\longrightarrow\textnormal{H}^{*}(\G''\star\F)\longrightarrow 0.
\end{equation}
Notice that by Proposition $4.4$, the global cohomology functor $\textnormal{H}^{*}(\bullet)$ is faithful, then the exactness of $(6.9)$ implies the sequence 
$$
\mathcal{G}'\star\mathcal{F}\longrightarrow\mathcal{G}\star\mathcal{F}\longrightarrow\mathcal{G}''\star\mathcal{F}\longrightarrow 0
$$ is also exact. The right exactness for $\mathcal{F}\star(\bullet)$
can be proved similarly.
\\

Now, assume $\F$ to be a projective object in the Satake category. By Proposition $5.4$, we know that the functors $(\bullet)\otimes \textnormal{H}^{*}(\mathcal{F})$ and $\textnormal{H}^{*}(\mathcal{F})\otimes (\bullet)$ are both exact. Then argue as before and use the monoidal structure and the faithfulness of the functor $\textnormal{H}^{*}(\bullet)$, we conclude the proof.
\end{proof}

We conclude the discussion on the monoidal structure of $\textnormal{H}^{*}$ by identifying it with the one constructed in \cite{Zh2}. For this purpose, we briefly recall the construction in \textit{loc.cit}.
\\

Let $\mathcal{F},\mathcal{G}\in P_{L^+G}(Gr_G,\bar{\mathbb{Q}}_{\ell})$. Assume that $L^+G$ acts on $\textnormal{supp}(\mathcal{G})$ via the quotient $L^+G\rightarrow L^mG$. Define $\textnormal{supp}(\mathcal{F})\tilde{\times}\textnormal{supp}(\mathcal{G}):=\textnormal{supp}(\mathcal{F})^{(m)}\times ^{L^mG}\textnormal{supp}(\mathcal{G})$ and denote by $\pi$ the projection morphism $\textnormal{supp}(\mathcal{F})^{(m)}\rightarrow \textnormal{supp}(\mathcal{F})$. Then we have an $L^+G\times L^mG$-equivariant projection morphism 
$$
p:\textnormal{supp}(\mathcal{F})^{(m)}\times \textnormal{supp}(\mathcal{G})\longrightarrow \textnormal{supp}(\mathcal{F})\tilde{\times}\textnormal{supp}(\mathcal{G})
$$
where $L^+G$ acts on $\textnormal{supp}(\mathcal{F})^{(m)}$ by multiplication on the left and $L^mG$ acts on $\textnormal{supp}(\mathcal{F})^{(m)}\times \textnormal{supp}(\mathcal{G})$ diagonally from the middle. Then $p$ induces a canonical isomorphism of the $L^+G$-equivariant cohomology (cf.\cite{Zh2} A.3.5)
\begin{equation}
\textnormal{H}_{L^+G}^{*}(\textnormal{supp}(\mathcal{F})\tilde{\times}\textnormal{supp}(\mathcal{G}),\mathcal{F}\tilde{\boxtimes}\mathcal{G})\cong \textnormal{H}_{L^+G\times L^mG}^{*}(\textnormal{supp}(\mathcal{F})^{(m)}\times\textnormal{supp}(\mathcal{G}),\pi^{*}\mathcal{F}\boxtimes\mathcal{G}).
\end{equation}
By the equivariant K\"{u}nneth formula (cf. \cite[A.1.15]{Zh1} ), there is a canonical isomorphism
\begin{equation}
\textnormal{H}_{L^+G\times L^mG}^{*}(\textnormal{supp}(\mathcal{F})^{(m)}\times\textnormal{supp}(\mathcal{G}),\pi^{*}\mathcal{F}\boxtimes\mathcal{G})\cong \textnormal{H}_{L^+G\times L^mG}^{*}(\textnormal{supp}(\mathcal{F})^{(m)},\mathcal{F})\otimes \textnormal{H}_{L^+G\times L^mG}^{*}(\textnormal{supp}(\mathcal{G}),\mathcal{G}).
\end{equation}
Combine $(6.11)$ with $(6.12)$ and we conclude a canonical isomorphism 
\begin{equation}
    \textnormal{H}_{L^+G}^{*}(\textnormal{supp}(\mathcal{F})\tilde{\times}\textnormal{supp}(\mathcal{G}),\mathcal{F}\tilde{\boxtimes}\mathcal{G})\cong \textnormal{H}^{*}_{L^+G}(\textnormal{supp}(\mathcal{F}),\mathcal{F})\otimes \textnormal{H}^{*}_{L^+G}(\textnormal{supp}(\mathcal{G}),\mathcal{G}).
\end{equation}
We denote by $\bar{G}_{\bar{\mathbb{Q}}_{\ell}}$ the base change of $\bar{G}$ to $\bar{\mathbb{Q}}_{\ell}$.
Let $R_{\bar{G},\ell}:=\textnormal{Sym}(\mathfrak{g}_{\bar{\mathbb{Q}}_{\ell}}(-1))^{G_{\bar{\mathbb{Q}}_{\ell}}}$ denote the algebra of invariant polynomials on the
Lie algebra $\mathfrak{g}_{\bar{\mathbb{Q}}_{\ell}}(-1)$. Then $(6.13)$ induces an isomorphism of $R_{\bar{G},\ell}$-bimodules. In addition, the two $R_{\bar{G},\ell}$-module structures coincide (\cite{Zh2} Lemma $2.19$) and the base change of $(6.13)$ along the argumentation map $R_{\bar{G},\ell}\rightarrow \bar{\mathbb{Q}}_{\ell}$, the canonical isomorphism 
\begin{equation}
\textnormal{H}^{*}_{L^+G}(\mathcal{F})\otimes_{R_{\bar{G},\ell}}\bar{\mathbb{Q}}_{\ell}\simeq \textnormal{H}^{*}(\mathcal{F}).
\end{equation} 
gives the monoidal structure of $\textnormal{H}^{*}$ in the $\bar{\mathbb{Q}}_{\ell}$-case (\cite{Zh2}, Proposition $2.20$).
\\

Then to identify the monoidal structures, it suffices to prove the following proposition.
\begin{proposition}
Let $\mathcal{F},\mathcal{G}\in P_{L^+G}(Gr_G,\mathbb{Z}_{\ell})$ be two projective objects. We denote $\mathcal{F}\otimes \bar{\mathbb{Q}}_{\ell}$ and $\mathcal{G}\otimes \bar{\mathbb{Q}}_{\ell}$ by $\F'$ and $\G'$, respectively. Then the following diagram commutes
\begin{equation}
\begin{tikzcd}[row sep=huge]
\textnormal{H}_{L^+G}^{*}(\textnormal{supp}(\mathcal{F}')\tilde{\times}\textnormal{supp}(\mathcal{G}'),\mathcal{F}'\tilde{\boxtimes}\mathcal{G}') \otimes_{R_{\bar{G},\ell}} \bar{\mathbb{Q}}_{\ell} \arrow[r,"(6.14)"] \arrow[d,"(6.13)"] & \textnormal{H}^{*}(\mathcal{F}'\tilde{\boxtimes} \mathcal{G}')\arrow[d,"\alpha"] \\
(\textnormal{H}_{L^+G}^{*}(\mathcal{F}')\otimes_{R_{\bar{G},\ell}} \textnormal{H}_{L^+G}^{*}(\mathcal{G}')) \otimes_{R_{\bar{G},\ell}} \bar{\mathbb{Q}}_{\ell} \arrow[d,"\cong"] & \bigoplus_{\mu_1,\mu_2}\textnormal{H}_c^{*}(S_{\mu_1}\tilde{\times}S_{\mu_2-\mu_1},\mathcal{F}'\tilde{\boxtimes}\mathcal{G}') \arrow[d,"\beta"] \\
(\textnormal{H}_{L^+G}^{*}(\mathcal{F}')\otimes_{R_{\bar{G},\ell}}\bar{\mathbb{Q}}_{\ell})\otimes_{\bar{\mathbb{Q}}_{\ell}}(\textnormal{H}_{L^+G}^{*}(\mathcal{G}') \otimes_{R_{\bar{G},\ell}} \bar{\mathbb{Q}}_{\ell}) \arrow[r,"\simeq"] & (\bigoplus_{\mu_1}\textnormal{H}_c^{*}(S_{\mu_1},\mathcal{F}'))\otimes _{\bar{\mathbb{Q}}_{\ell}} (\bigoplus_{\mu_2}\textnormal{H}_c^{*}(S_{\mu_2-\mu_1},\mathcal{G}'))
\end{tikzcd}
\end{equation}
where the morphisms $\alpha$ and $\beta$ are the base change of isomorphisms $(6.4)$ and $(6.1)$ to $\bar{\mathbb{Q}}_{\ell}$, respectively.
\end{proposition}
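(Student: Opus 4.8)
The plan is to trace the two composites from the top-left corner of the diagram to the bottom-right corner and show that they agree by reducing everything to statements about compactly-supported cohomology of (convolutions of) semi-infinite orbits, where the two constructions are manifestly the same. Concretely, both the equivariant K\"unneth isomorphism $(6.13)$ of \cite{Zh2} and our isomorphism $(6.1)$ are built from hyperbolic localization for the $\mathbb{G}_m$-action on $Gr_G\tilde{\times}Gr_G$, so the key point is that, after base change to $\bar{\mathbb{Q}}_{\ell}$ and specialization along the augmentation $R_{\bar G,\ell}\to\bar{\mathbb{Q}}_{\ell}$, the $L^+G$-equivariant cohomology in \textit{loc.cit.} is computed via the same attracting/repelling loci that appear in Lemma $6.1$. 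The projectivity hypothesis on $\F$ and $\G$ is used exactly as in Lemma $6.1$: it guarantees ${}^{p}\textnormal{H}^0(\F'\boxtimes\G')=\F'\boxtimes\G'$ (via Proposition $5.4$ and the torsion-freeness of $\textnormal{H}^*$), so that no perverse truncation intervenes and the ordinary (non-equivariant) K\"unneth formula applies on the nose.

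First I would reduce to the non-equivariant picture. The map $(6.14)$, i.e.\ $\textnormal{H}^{*}_{L^+G}(-)\otimes_{R_{\bar G,\ell}}\bar{\mathbb{Q}}_{\ell}\simeq\textnormal{H}^{*}(-)$, is induced by the forgetful/Borel-construction map $Gr\to [Gr/L^+G]$, and it is functorial in the space; applying it to $p:\textnormal{supp}(\F')^{(m)}\times\textnormal{supp}(\G')\to\textnormal{supp}(\F')\tilde{\times}\textnormal{supp}(\G')$ and to the projections to the two factors shows that the square formed by the top horizontal arrow, $(6.13)$, and $(6.14)$ on both factors commutes --- this is just naturality of $(6.14)$ together with the fact that $(6.13)$ is defined by pull-push along $p$ and the equivariant K\"unneth map, both of which are compatible with forgetting equivariance. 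So after this step the left column has been identified with the ordinary K\"unneth decomposition $\textnormal{H}^{*}(\F'\tilde{\boxtimes}\G')\simeq\textnormal{H}^{*}(\F')\otimes_{\bar{\mathbb{Q}}_{\ell}}\textnormal{H}^{*}(\G')$ coming from $(pr_1,m):Gr_G\tilde{\times}Gr_G\simeq Gr_G\times Gr_G$.

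Next I would identify the right column with that same ordinary K\"unneth decomposition. The arrow $\alpha$ (base change of $(6.4)$, hence of Proposition $6.2$) decomposes $\textnormal{H}^{*}(\F'\tilde{\boxtimes}\G')$ into the sum over $\mu_1,\mu_2$ of $\textnormal{H}^{*}_c(S_{\mu_1}\tilde{\times}S_{\mu_2-\mu_1},\F'\tilde{\boxtimes}\G')$ via the $\mathbb{G}_m$-hyperbolic-localization filtration, and $\beta$ (base change of $(6.1)$) further identifies each summand with $\textnormal{H}^{*}_c(S_{\mu_1},\F')\otimes\textnormal{H}^{*}_c(S_{\mu_2-\mu_1},\G')$, summing to $\textnormal{H}^{*}(\F')\otimes\textnormal{H}^{*}(\G')$ by Proposition $4.4$. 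Thus both composites land in $\bigoplus_{\mu_1}\textnormal{H}^{*}_c(S_{\mu_1},\F')\otimes\bigoplus_{\mu_2}\textnormal{H}^{*}_c(S_{\mu_2-\mu_1},\G')$; it remains to check they induce the \emph{same} map. For this I would compare the two at the level of the common intermediate object $\textnormal{H}^{*}(\F')\otimes_{\bar{\mathbb{Q}}_{\ell}}\textnormal{H}^{*}(\G')$: both the left route (K\"unneth from $(pr_1,m)$, then the product of the $S_\bullet$-filtrations on each factor from Proposition $4.4$) and the right route (the $S_\bullet\tilde{\times}S_\bullet$-filtration on $Gr_G\tilde{\times}Gr_G$, then Lemma $6.1$) are induced by the \emph{same} $\mathbb{G}_m$-action on $Gr_G\tilde{\times}Gr_G$ via $(pr_1,m)$ and the diagonal action on $Gr_G\times Gr_G$; hyperbolic localization is compatible with products, so the filtration on $Gr_G\tilde{\times}Gr_G$ by $S_{\mu_1}\tilde{\times}S_{\mu_2-\mu_1}$ corresponds under $(pr_1,m)$ to the product filtration by $S_{\mu_1}\times S_{\mu_2}$, and the resulting graded pieces and their K\"unneth splittings agree. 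Chasing a class $a\otimes b$ with $a\in\textnormal{H}^{*}_c(S_{\mu_1},\F')$, $b\in\textnormal{H}^{*}_c(S_{\mu_2-\mu_1},\G')$ through both routes --- realizing $a,b$ as maps $\underline{\Lambda}\to\F'[m],\underline{\Lambda}\to\G'[n]$ exactly as in the proof of Lemma $6.1$, forming $a\boxtimes b$, and pulling back along $q$ --- yields the same element of $\textnormal{H}^{*}_c(S_{\mu_1}\tilde{\times}S_{\mu_2-\mu_1},\F'\tilde{\boxtimes}\G')$, which gives the commutativity.

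The main obstacle I anticipate is the bookkeeping in the passage from $L^+G$-equivariant cohomology to ordinary cohomology while keeping track of the $R_{\bar G,\ell}$-module structures: one must verify that the equivariant K\"unneth isomorphism $(6.13)$, which a priori is an isomorphism of $R_{\bar G,\ell}$-bimodules with the two module structures \emph{identified} only by \cite[Lemma 2.19]{Zh2}, base-changes along the augmentation to exactly the ordinary K\"unneth map, with no discrepancy hidden in the choice of which copy of $R_{\bar G,\ell}$ one tensors against. Once that compatibility of structures is pinned down, together with the fact (Proposition $5.4$) that for projective $\F',\G'$ there is no perverse-truncation correction, the diagram chase is formal; so I would budget most of the effort for the equivariant-to-ordinary comparison and treat the hyperbolic-localization/K\"unneth compatibility as a routine consequence of the constructions already set up in \S 6.
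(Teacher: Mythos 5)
Your plan differs substantially from the paper's, and there is a genuine gap in the central step. The paper proves commutativity by lifting everything to $T$-equivariant cohomology: because the semi-infinite orbits $S_\mu$ and their twisted products are stable under the constant torus $T\subset L^+T\subset L^+G$ (though \emph{not} under $L^+G$), one can define the filtrations $\textnormal{Fil}_{\geq\mu_1,\mu_2}\textnormal{H}_T^*$ and $\textnormal{Fil}_{\geq\mu}\textnormal{H}_T^*$ on $T$-equivariant cohomology, lift $(6.13)$ to a $T$-equivariant statement $(6.16)$ via $\textnormal{H}_T^*\simeq\textnormal{H}_{L^+G}^*\otimes_{R_{\bar G,\ell}}R_{\bar T,\ell}$, and then verify the compatibility of $(6.16)$ with those filtrations over the generic point of $\textnormal{Spec}\,R_{\bar T,\ell}$, where the equivariant localization theorem reduces everything to direct sums over the $T$-fixed points $\varpi^\mu$. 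Your route is instead to base-change directly to non-equivariant cohomology and claim the rest is a routine check. That is exactly where the argument collapses.

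Concretely, you assert that after applying $(6.14)$ the left column ``has been identified with the ordinary K\"unneth decomposition coming from $(pr_1,m):Gr_G\tilde{\times}Gr_G\simeq Gr_G\times Gr_G$.'' But there is no such object: $(pr_1,m)$ does \emph{not} transport $\F'\tilde{\boxtimes}\G'$ to the external product $\F'\boxtimes\G'$, so there is no ``ordinary K\"unneth from $(pr_1,m)$'' to identify with. The isomorphism $(6.13)$ is constructed by pull-push along the $L^mG$-torsor $p:\textnormal{supp}(\F')^{(m)}\times\textnormal{supp}(\G')\to\textnormal{supp}(\F')\tilde{\times}\textnormal{supp}(\G')$ plus the equivariant K\"unneth formula, a construction that a priori has nothing to do with $(pr_1,m)$ or with the $\mathbb{G}_m$-action. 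The whole content of the proposition is precisely that these two unrelated-looking constructions agree after base change; you cannot invoke this as a ``routine consequence of the constructions already set up.'' Moreover, the filtration-compatibility you would need — that the base change of $(6.13)$ respects $\textnormal{Fil}_{\geq\mu_1,\mu_2}$ and $\textnormal{Fil}_{\geq\mu}$ — does not even make direct sense at the $L^+G$-equivariant level (the semi-infinite orbits are not $L^+G$-stable), which is why the paper is forced to pass through $T$-equivariant cohomology, establish freeness over $R_{\bar T,\ell}$ so the filtration lifts, and localize at fixed points; a Verdier-duality step is then used to pass to the complementary filtrations. None of that machinery appears in your sketch, and your assessment that the ``hyperbolic-localization/K\"unneth compatibility'' is routine has it backwards: that is the entire difficulty.
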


\begin{proof}
Consider the filtrations 
$$
\textnormal{Fil}_{\geq\mu_1,\mu_2}\textnormal{H}^{*}(\mathcal{F}'\tilde{\boxtimes}\mathcal{G}'), \textnormal{Fil}_{\geq\mu}\textnormal{H}^{*}(\mathcal{F}'),\text{ and } \textnormal{Fil}_{\geq\mu}\textnormal{H}^{*}(\mathcal{G}')
$$ defined as in Proposition $6.2$ and Proposition $4.4$. To prove the proposition, it suffices to prove these filtrations respect $(6.13)$. Then taking the Verdier duality (note now $\textnormal{H}^{*}(\mathcal{F}'\tilde{\boxtimes}\mathcal{G}')$, $\textnormal{H}^{*}(\mathcal{F}')$, and $\textnormal{H}^{*}(\mathcal{G}')$ are all $\bar{\mathbb{Q}}_{\ell}$-vector spaces) implies that the complementary filtrations $\textnormal{Fil}'_{<\mu_1,\mu_2}$ and $\textnormal{Fil}'_{<\mu}$ also respect $(6.13)$. This will provide the commutativity of $(6.15)$.
\\

The approach we will use is similar to the one given in \cite[Proposition 5.3.14]{Zh1} and we sketch it here. Although the semi-infinite orbit $S_{\mu}$ does not admit an $L^+G$-action, it is stable under the action of the constant torus $T\subset L^+T\subset L^+G$. Then so does the convolution product of semi-infinite orbits $S_{\mu_1}\tilde{\times} S_{\mu_2-\mu_1}$.  Stratifying $Gr_G\tilde{\times}Gr_G$ by $\{S_{\mu_1}\tilde{\times} S_{\mu_2-\mu_1}\vert \mu_1,\mu_2\in \mathbb{X}_{\bullet}\}$, we get a spectral sequence with $E_1$-terms $\textnormal{H}_{T,c}^{*}(S_{\mu_1}\tilde{\times} S_{\mu_2-\mu_1},\mathcal{F}'\tilde{\boxtimes}\mathcal{G}')$ which abuts to $\textnormal{H}_T^{*}(\mathcal{F}'\tilde{\boxtimes}\mathcal{G}')$. By \cite[Proposition 2.7]{Zh2}, the spectral sequence degenerates at the $E_1$-page and the filtration $\textnormal{Fil}_{\geq\mu_1,\mu_2}$ thus lifts to a new filtration of $\textnormal{H}_{T}^{*}$
$$
\textnormal{Fil}_{\geq\mu_1,\mu_2}\textnormal{H}_{T}^{*}(\mathcal{F}'\tilde{\boxtimes}\mathcal{G}'):=\textnormal{ker}(\textnormal{H}^*_T(\mathcal{F}'\tilde{\boxtimes}\G')\rightarrow \textnormal{H}^*_T(S_{<\mu_1}\tilde{\times} S_{<\mu_2-\mu_1},\mathcal{F}'\tilde{\boxtimes}\G')).
$$
Use a similar argument as in the proof of Proposition $6.2$, the associated graded of this filtration equals $\bigoplus_{\mu_1,\mu_2} \textnormal{H}_{T,c}^{*}(S_{\mu_1}\tilde{\times} S_{\mu_2-\mu_1},\mathcal{F}'\tilde{\boxtimes}\mathcal{G}')$. Note all the terms in this filtration and the associated graded are in fact free $R_{\bar{T},\ell}$-modules, then base change to $\bar{\mathbb{Q}}_{\ell}$ along the argumentation map $R_{\bar{T},\ell}\rightarrow \bar{\mathbb{Q}}_{\ell}$ recovers our original filtration $\textnormal{Fil}_{\geq\mu_1,\mu_2}$. Similarly, we can define the filtrations $\textnormal{Fil}_{\geq\mu}\textnormal{H}_T^{*}(\mathcal{F}')$ and $\textnormal{Fil}_{\geq\mu}\textnormal{H}_T^{*}(\mathcal{G}')$ which recover the original filtrations $\textnormal{Fil}_{\geq\mu}\textnormal{H}^{*}(\mathcal{F}')$ and $\textnormal{Fil}_{\geq\mu}\textnormal{H}^{*}(\mathcal{G}')$ in the same way.
\\

Since 
$$
\textnormal{H}_T^{*}(\bullet)\simeq \textnormal{H}_{L^+G}^{*}(\bullet)\otimes_{R_{\bar{G},\ell}} R_{\bar{T},\ell}:P_{L^+G}(Gr_G,\bar{\mathbb{Q}}_{ell})\longrightarrow \textnormal{Vect}_{\bar{\mathbb{Q}}_{\ell}},
$$
then $(6.13)$ induces a monoidal structure on the $T$-equivariant cohomology 
\begin{equation}
\textnormal{H}_T^{*}(\mathcal{F}'\star{G}')\simeq \textnormal{H}_T^{*}(\mathcal{F}')\otimes \textnormal{H}_T^{*}(\mathcal{G}').
\end{equation}
Then we are left to show that $(6.16)$ is compatible with the filtrations $\textnormal{Fil}_{\geq\mu_1,\mu_2}$ and $\textnormal{Fil}_{\geq\mu}$. It suffices to check the compatibility with filtrations $\textnormal{Fil}_{\geq\mu_1,\mu_2}\textnormal{H}_T^{*}$ and $\textnormal{Fil}_{\geq\mu}\textnormal{H}_T^{*}$ over the generic point of $\textnormal{Spec}R_{\bar{T},\ell}$. Denote 
$$
\textnormal{H}_{\lambda}:=\textnormal{H}_{T}^{*}\otimes_{R_{\bar{T},\ell}}Q
$$
where $Q$ is the fraction filed of $R_{\bar{T},\ell}$.
By the equivariant localization theorem, we have isomorphisms
$$
\textnormal{H}_{\lambda}(\mathcal{F}'\tilde{\boxtimes}\mathcal{G}')\simeq \bigoplus_{\mu_1,\mu_2}\textnormal{H}_{\lambda}(\mathcal{F}'\tilde{\boxtimes}\mathcal{G}'\vert_{(\varpi^{\mu_1}\TT\varpi^{\mu_2-\mu_1})}),
$$
and
$$
\textnormal{H}_{\lambda}^{*}(S_{<\mu_1,<\mu_2}\,\mathcal{F}'\tilde{\boxtimes}\mathcal{G}')\simeq  \bigoplus_{\nu_1<\mu_1,\nu_2<\mu_2}\textnormal{H}_{\lambda}(\mathcal{F}'\tilde{\boxtimes}\mathcal{G}'\vert_{(\varpi^{\nu_1}\TT\varpi^{\nu_2-\nu_1})}).
$$
Then it follows that 
$$
\textnormal{Fil}_{\geq\mu_1,\mu_2}\textnormal{H}_{\lambda}(\mathcal{F}'\tilde{\boxtimes}\mathcal{G}'):=\textnormal{Fil}_{\geq\mu_1,\mu_2}\textnormal{H}_{T}^{*}(\mathcal{F}'\tilde{\boxtimes}\mathcal{G}')\otimes_{R_{\bar{T},\ell}} Q \simeq \bigoplus_{\nu_1\geq\mu_1,\nu_2\geq\mu_2}\textnormal{H}_{\lambda}(\mathcal{F}'\tilde{\boxtimes}\mathcal{G}'\vert_{(\varpi^{\nu_1}\TT\varpi^{\nu_2-\nu_1})}).
$$ 
Applying the equivariant localization theorem again gives isomorphisms
$$
\textnormal{H}_{\lambda}(\mathcal{F}')\simeq\bigoplus _{\mu}\textnormal{H}_{\lambda}(\mathcal{F}'\vert_{\varpi^{\mu}}), 
$$
and
$$
\textnormal{H}_{\lambda}(S_{<\mu},\mathcal{F}')\simeq\bigoplus _{\nu<\mu}\textnormal{H}_{\lambda}(\mathcal{F}'\vert_{\varpi^{\nu}}).
$$
Similarly we get a filtration  $\textnormal{Fil}_{\geq\mu}\textnormal{H}_{\lambda}(\mathcal{F}')\simeq \oplus_{\nu\geq\mu} \textnormal{H}_{\lambda}(\mathcal{F}'\vert_{\varpi^{\nu}})$ induced by $\textnormal{Fil}_{\geq\mu}\textnormal{H}_T^{*}(\F)$. 
\\

Notice that as for $\textnormal{H}_{L^+G}^{*}(\bullet)$, the monoidal structure $(6.16)$ is defined via the composition of the following isomorphisms
\begin{align*}
    & \textnormal{Fil}_{\geq\mu_1,\mu_2}\textnormal{H}_{\lambda}(\mathcal{F}'\tilde{\boxtimes}\mathcal{G}')\\
 \simeq & \bigoplus_{\nu_1\geq\mu_1,\nu_2\geq\mu_2}\textnormal{H}_{\lambda}(\mathcal{F}'\tilde{\boxtimes}\mathcal{G}'\vert_{(\varpi^{\nu_1}\TT\varpi^{\nu_2-\nu_1})})\\
 \simeq & \bigoplus_{\nu_1\geq\mu_1,\nu_2\geq\mu_2}\textnormal{H}_{\lambda}(\mathcal{F}'\vert_{\varpi^{\nu_1}})\otimes \textnormal{H}_{\lambda}(\mathcal{G}'\vert_{\varpi^{\nu_2-\nu_1}})\\
 \simeq & \bigoplus_{\nu_1\geq\mu_1}\textnormal{H}_{\lambda}(\mathcal{F}'\vert_{\varpi^{\nu_1}})\otimes \bigoplus_{\nu_2\geq\mu_2-\mu_1}\textnormal{H}_{\lambda}(\mathcal{G}'\vert_{\varpi^{\nu_2}}),\\
 \simeq & \textnormal{Fil}_{\geq\mu_1}\textnormal{H}_{\lambda}(\mathcal{F}')\bigotimes \textnormal{Fil}_{\geq\mu_2-\mu_1}\textnormal{H}_{\lambda}(\mathcal{G}')
\end{align*}
where the second isomorphism is obtained by an analogue of $(6.13)$ for $T$-equivariant cohomology and the equivariant K\"{u}nneth formula. Note the monoidal structure of the total weight functor $\textnormal{CT}$ is compatible with that of the hypercohomology cohomology functor $\textnormal{H}^{*}$ by Proposition $6.3$,  we thus conclude the proof.
\end{proof}

\section{Tannakian Construction}
Let $Z\subset Gr_G$ denote a closed subspace consisting of a finite union of $L^+G$-orbits. Then any $\F\in P_{L^+G}(Z,\Lambda)$ admits a presentation
$$
P_1\longrightarrow P_0\longrightarrow \F\longrightarrow 0,$$
where $P_1$ and $P_0$ are finite direct sums of $P_Z(\Lambda)$. Write $A_Z(\Lambda)$ for $\textnormal{End}_{P_{L^+G}(Z,\Lambda)}(P_Z(\Lambda))^{op}$. Since  by Proposition $5.3.(2)$ $A_Z(\Lambda)$ is a finite free $\Lambda$-module, any finitely generated $A_Z(\Lambda)$-module is also finitely presented. Now we recall the following version of Gabriel and Mitchell's theorem as formulated in \cite[Theorem 9.1]{BR}.

\begin{thm}
Let $\mathcal{C}$ be an abelian category. Let $P$ be a projective object and write $A=\textnormal{End}_{\mathcal{C}}(P)^{op}$. Denote $\mathcal{M}$ to be the full subcategory of $\mathcal{C}$ consisting of objects $M$ which admits a presentation 
$$
P_1\longrightarrow P_0\longrightarrow M\longrightarrow 0,$$
where $P_1$ and $P_0$ are finite direct sums of $P$. Let $\mathcal{M}'_A$ be the category of finitely presented right A-modules. Then
\begin{itemize}
    \item [(1)] there is an equivalence of abelian categories $\mathcal{M}\simeq \mathcal{M}'$ induced by the functor $\textnormal{Hom}_{\mathcal{C}}(P,\bullet)$
    \item[(2)] there is a canonical isomorphism between the endomorphism ring of the functor $\textnormal{Hom}_{\mathcal{C}}(P,\bullet)$ and $A^{op}$.
\end{itemize}
\end{thm}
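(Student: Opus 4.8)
The plan is to show that $h_P:=\textnormal{Hom}_{\mathcal{C}}(P,-)$, viewed as a functor from $\mathcal{C}$ to (right) $A$-modules with $A$ acting by precomposition, restricts to the asserted equivalence $\mathcal{M}\simeq\mathcal{M}'$, and then to compute its endomorphism ring via the Yoneda lemma. Two elementary facts do all the work. First, $h_P$ sends the finite direct sum $P^{\oplus n}$ to the free module $A^{\oplus n}$, compatibly with composition, so that $h_P$ induces a bijection $\textnormal{Hom}_{\mathcal{C}}(P^{\oplus m},P^{\oplus n})\xrightarrow{\sim}\textnormal{Hom}_A(A^{\oplus m},A^{\oplus n})$. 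Second, because $P$ is projective, $h_P$ is exact, in particular right exact.

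For part (1) I would argue in three steps. (i) $h_P$ lands in $\mathcal{M}'$: applying the right exact $h_P$ to a presentation $P_1\to P_0\to M\to 0$ by finite sums of $P$ produces an exact sequence $A^{\oplus m}\to A^{\oplus n}\to h_P(M)\to 0$, so $h_P(M)$ is finitely presented. (ii) Full faithfulness on $\mathcal{M}$: for $M=P^{\oplus n}$ the natural map $\textnormal{Hom}_{\mathcal{C}}(P^{\oplus n},N)\to\textnormal{Hom}_A(A^{\oplus n},h_P(N))$ is a bijection, both sides being $h_P(N)^{\oplus n}$ by evaluation at the coordinate morphisms; for a general $M\in\mathcal{M}$ choose a presentation by finite sums of $P$, apply the left exact functors $\textnormal{Hom}_{\mathcal{C}}(-,N)$ and $\textnormal{Hom}_A(-,h_P(N))$, and compare the two resulting four-term left-exact sequences --- the middle and right comparison arrows are isomorphisms by the free case, so a short diagram chase forces $\textnormal{Hom}_{\mathcal{C}}(M,N)\xrightarrow{\sim}\textnormal{Hom}_A(h_P(M),h_P(N))$. (iii) Essential surjectivity onto $\mathcal{M}'$: given $V\in\mathcal{M}'$ with presentation $A^{\oplus m}\xrightarrow{\phi}A^{\oplus n}\to V\to 0$, use (ii) to write $\phi=h_P(\psi)$ for a unique $\psi\colon P^{\oplus m}\to P^{\oplus n}$, put $M:=\textnormal{coker}_{\mathcal{C}}(\psi)\in\mathcal{M}$, and note that right exactness of $h_P$ gives $h_P(M)\cong\textnormal{coker}(\phi)\cong V$. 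Together (i)--(iii) yield the equivalence $\mathcal{M}\simeq\mathcal{M}'$ induced by $\textnormal{Hom}_{\mathcal{C}}(P,-)$. That this is an equivalence of \emph{abelian} categories follows because in our situation $A=A_Z(\Lambda)$ is finite free over $\Lambda$, hence Noetherian, so $\mathcal{M}'$ is abelian; using exactness of $h_P$ and the fact that a subobject of some $P^{\oplus n}$ has finitely generated image under $h_P$, one checks $\mathcal{M}$ is closed under kernels and cokernels in $\mathcal{C}$, so the equivalence transports the abelian structure.

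For part (2), the Yoneda lemma for the covariant functor $h_P$ gives a bijection between natural endomorphisms of $h_P$ and $h_P(P)=\textnormal{End}_{\mathcal{C}}(P)$, sending $\eta$ to $\eta_P(\textnormal{id}_P)$, with inverse $a\mapsto(f\mapsto f\circ a)$. Comparing composition of natural transformations with composition in $\textnormal{End}_{\mathcal{C}}(P)$ and matching conventions identifies the endomorphism ring of $\textnormal{Hom}_{\mathcal{C}}(P,-)$ with $A^{op}$, the convention $A=\textnormal{End}_{\mathcal{C}}(P)^{op}$ being chosen precisely so that this comes out as stated; such $\eta$ automatically respect the precomposition action since that action too is composition in $\textnormal{End}_{\mathcal{C}}(P)$.

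I expect no deep obstacle --- this is the standard Morita/Gabriel--Mitchell d\'evissage --- but two points require genuine care. The first is the bookkeeping of left/right module structures and of the two ``op''s involved (the one built into $A=\textnormal{End}_{\mathcal{C}}(P)^{op}$ and the one arising from composing natural transformations): a consistent convention is what makes part (2) and the ``right $A$-module'' formulation of part (1) come out correctly. The second is the assertion that $\mathcal{M}$ is abelian with $\mathcal{M}\hookrightarrow\mathcal{C}$ exact, which is not formal for an arbitrary ring $A$ and is exactly where the Noetherianity of $A$ --- available since $A$ is finite free over $\Lambda$ --- enters. The remaining ingredients (the free-module base case, the diagram chase, the Yoneda computation) are mechanical.
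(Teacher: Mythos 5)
The paper does not prove this statement: it is quoted verbatim and attributed to Baumann--Riche, \cite[Theorem~9.1]{BR}, so there is no in-paper argument to compare against. Your blind proof is the standard Gabriel--Mitchell/Morita d\'evissage and is correct in substance: reduce full faithfulness and essential surjectivity to the free case $P^{\oplus n}\leftrightarrow A^{\oplus n}$ via exactness of $\textnormal{Hom}_{\mathcal{C}}(P,-)$, and use co-Yoneda for the endomorphism-ring computation. Two points deserve emphasis. You are right that ``equivalence of abelian categories'' is not formal from the hypotheses as transcribed: finitely presented right $A$-modules form an abelian category only when $A$ is right coherent, and closure of $\mathcal{M}$ under kernels likewise needs control; the paper's version of \cite[Theorem~9.1]{BR} has silently dropped the Noetherianity hypothesis on $P$ present in the source, and your observation that $A=A_Z(\Lambda)$ is finite free over $\Lambda$, hence Noetherian, is exactly the needed input. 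For part (2), you should not defer the ``matching conventions'' step: the co-Yoneda bijection $\eta\mapsto\eta_P(\textnormal{id}_P)$ sends $\eta\circ\theta$ to $\theta_P(\textnormal{id}_P)\circ\eta_P(\textnormal{id}_P)$ (since $\eta_X(f)=f\circ\eta_P(\textnormal{id}_P)$), so with standard conventions one obtains $\textnormal{End}\bigl(\textnormal{Hom}_{\mathcal{C}}(P,-)\bigr)\cong\textnormal{End}_{\mathcal{C}}(P)^{op}=A$, not $A^{op}=\textnormal{End}_{\mathcal{C}}(P)$ as displayed; this discrepancy should be reconciled against \cite{BR}'s conventions rather than dismissed as bookkeeping.
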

The above theorem and the discussion before it enable us to deduce an equivalence of abelian categories
$$
E_Z:P_{L^+G}(Z,\Lambda)\simeq \mathcal{M}'_{A_Z(\Lambda)}.
$$
Let $i:Y\hookrightarrow Z$ be an inclusion of closed subsets consisting of $L^+G$-orbits, then we have the functor $i_*:P_{L^+G}(Y,\Lambda)\rightarrow P_{L^+G}(Z,\Lambda)$. In addition,  $i_*$ induces a functor $(i_Y^Z)^*:\mathcal{M}'_{A_Y(\Lambda)}\rightarrow \mathcal{M}'_{A_Z(\Lambda)}$ which in turn gives a ring homomorphism $i^Z_Y:A_Z(\Lambda)\rightarrow A_Y(\Lambda)$. Note for any $a\in A_Z(\Lambda)$ and $\F\in P_{L^+G}(Y,\Lambda)$, we have canonical isomorphisms
\begin{align*}
 & a\cdot  E_Z(i_*\F)\\
 \simeq & a \cdot \textnormal{Hom}(P_Z(\Lambda),i_*\F))\\
  \simeq & a \cdot (i^Z_Y)^* (\textnormal{Hom}(P_Z(\Lambda),i_*\F))\\
 \simeq & i_Y^Z(a) \cdot \textnormal{Hom}(P_Y(\Lambda),\F))\\
 \simeq & i_Y^Z(a)\cdot E_Y(\F).
 \end{align*}

Define $B_Z(\Lambda):=\textnormal{Hom} (A_Z(\Lambda),\Lambda))$. Since $A_Z(\Lambda)$ is a finite free $\Lambda$-module, then so is $B_Z(\Lambda) $ and we have the following canonical equivalence of abelian categories
$$
\mathcal{M}'_{A_Z(\Lambda)}\simeq \textnormal{Comod}_{B_Z(\Lambda)}.
$$
The dual map of $i^Z_Y$ gives a map $\iota_Z^Y:B_Y(\Lambda)\rightarrow B_Z(\Lambda)$. Let $B(\Lambda)=\varinjlim B_{Z}(\Lambda)$, we conclude that $\textnormal{Sat}_{G,\Lambda}\simeq \textnormal{Comod}_{B(\Lambda)}$ as abelian categories. Moreover, by Proposition $5.3.(3)$ we know that 
\begin{equation}
    B(\Lambda)\simeq B(\mathbb{Z}_{\ell})\otimes_{\mathbb{Z}_{\ell}}\Lambda
\end{equation}
for $\Lambda=\bar{\mathbb{Q}}_{\ell}$ and $\mathbb{F}_{\ell}$.
\\

Take any $\mu\in \mathbb{X}_{\bullet}^{+}$, write $A_{\mu}(\Lambda)$ and $B_{\mu}(\Lambda)$ for $A_{Gr_{\leq\mu}}(\Lambda)$ and $B_{Gr_{\leq\mu}}(\Lambda)$, respectively. For any $\mu,\nu\in \mathbb{X}_{\bullet}^{+}$ such that $\mu\leq \nu$, use notations $i^{\nu}_{\mu}$ and $\iota_{\nu}^{\mu}$ for $i^{Gr_{\leq\nu}}_{Gr_{\leq\mu}}$ and $\iota^{Gr_{\leq\mu}}_{Gr_{\leq\nu}}$, respectively. Also, we denote by $P_{\theta}(\Lambda)$ the projective object $P_{Gr_{\leq\theta}(\Lambda)}$ for any $\theta\in\mathbb{X}_{\bullet}^{+}$. Note the following canonoical isomorphism by the monoidal structure of $\textnormal{H}^{*}$ established by Proposition $6.4$
\begin{align*}
   & \textnormal{Hom}(P_{\mu+\nu}(\Lambda),P_{\mu}\star P_{\nu})\\
   \simeq & \textnormal{H}^{*}(P_{\mu}\star P_{\nu}) \\
   \simeq & \textnormal{H}^{*}(P_{\mu})\otimes \textnormal{H}^{*}(P_{\nu}) \\
   \simeq & \textnormal{Hom}(P_{\mu}(\Lambda),P_{\mu}(\Lambda))\otimes\textnormal{Hom}(P_{\nu}(\Lambda),P_{\nu}(\Lambda))
   \end{align*}
Then the element $id_{P_{\mu}(\Lambda)}\otimes id_{P_{\nu}(\Lambda)}\in \textnormal{Hom}(P_{\mu}(\Lambda),P_{\mu}(\Lambda))\otimes\textnormal{Hom}(P_{\nu}(\Lambda),P_{\nu}(\Lambda))$ gives rise to a morphism 
$$
f_{\mu,\nu}:P_{\mu+\nu}(\Lambda)\longrightarrow P_{\mu}(\Lambda)\star P_{\nu}(\Lambda).
$$
Applying the functor $\textnormal{H}^{*}$ and dualizing, we get a morphism
$$
g_{\mu,\nu}: B_{\mu}(\Lambda)\otimes B_{\nu}(\Lambda)\rightarrow B_{\mu+\nu}(\Lambda).
$$
We check that the multiplication maps $g_{\bullet,\bullet}$ is compatible with the maps $\iota^{\bullet}_{\bullet}$ i.e. for any $\mu\leq\mu',\nu\leq\nu'\in \mathbb{X}_{\bullet}^{+}$, the following diagram commutes
\begin{equation}
    \begin{tikzcd}[row sep=huge]
    B_{\mu}(\Lambda)\otimes B_{\nu}(\Lambda)\arrow[r,"g_{\mu,\nu}"], \arrow[d,"\iota^{\mu}_{\mu'}\otimes \iota^{\nu}_{\nu'}"] & B_{\mu+\nu}(\Lambda) \arrow[d,"\iota_{\mu'+\nu'}^{\mu+\nu}"]\\
    B_{\mu'}(\Lambda)\otimes B_{\nu'}(\Lambda)\arrow[r,"g_{\mu',\nu'}"] & B_{\mu'+\nu'}(\Lambda).
    \end{tikzcd}
\end{equation}
By the constructions of $g$'s and $\iota$'s, it suffices to check the commutativity for 

\begin{equation}
    \begin{tikzcd}[row sep=huge]
    P_{\mu'+\nu'}(\Lambda) \arrow[r,"f_{\mu',\nu'}"] \arrow[d,"p^{\mu'+\nu'}_{\mu+\nu}"] & P_{\mu'}(\Lambda)\star P_{\nu'}(\Lambda) \arrow[d,"p^{\mu'}_{\mu}\star p^{\nu'}_{\nu}"] \\
     P_{\mu+\nu}(\Lambda) \arrow[r,"f_{\mu,\nu}"] & P_{\mu}(\Lambda)\star P_{\nu}(\Lambda),
    \end{tikzcd}
\end{equation}
Here, maps $p_{\bullet}^{\bullet}$ appear in the above diagram are the maps $p_{\bullet}^{\bullet}$ in Proposition $5.3.(1)$. The construction of $f$'s implies we are left to show the following diagram commutes
$$
\begin{tikzcd}[row sep=huge]
\textnormal{H}^{*}(P_{\mu'}(\Lambda)\star P_{\nu'}(\Lambda) ) \arrow[d,"\textnormal{H}^{*}(p^{\mu'}_{\mu}\star p_{\nu'}^{\nu})"]\arrow[r,"\cong"] & \textnormal{H}^{*}(P_{\mu'}(\Lambda))\otimes \textnormal{H}^{*}(P_{\nu'}(\Lambda)) \arrow[d,"\textnormal{H}^{*}(p_{\mu}^{\mu'})\otimes \textnormal{H}^{*}(p_{\nu}^{\nu'})"]\\
\textnormal{H}^{*}(P_{\mu}(\Lambda)\star P_{\nu}(\Lambda)) \arrow[r,"\cong"] & \textnormal{H}^{*}(P_{\mu}(\Lambda))\otimes \textnormal{H}^{*}(P_{\nu}(\Lambda)).
\end{tikzcd}
$$
  By the monoidal structrure of $\textnormal{H}^{*}$, the above diagram commutes and so is diagram $(9.2)$. Taking direct limit, the morphisms $g_{\mu,\nu}$ give a multiplication map on $B(\Lambda)$ by the above discussion. Our observation at the end of section $3$ ensures the multiplication on $B(\Lambda)$ is associative. 
\\

Note clearly, $B_0(\Lambda)=\Lambda$ and the canonical map $B_0(\Lambda)\rightarrow B(\Lambda)$ gives the unit map for $B(\Lambda)$. Now to endow $B(\Lambda)$ with a bialgebra structure in the sense of \cite[\S 2]{DM}, it suffices to prove the multiplication on $B(\Lambda)$ is commutative and $B(\Lambda)$ admits an antipode. The later statement can be proved in a completely similar manner as in \cite[Proposition 13.4]{BR} once the former statement is proved. Thus it suffices to prove the commutativity of the mulitiplication of $B(\Lambda)$ . 
\\

First by the compatibility of morphisms $g_{\bullet,\bullet}$ with $\iota_{\bullet}^{\bullet}$, it suffices to prove for each $\mu\in\mathbb{X}_{\bullet}^{+}$, the multiplication on $B_{\mu}(\Lambda)$ is commutative. Consider the following diagram
\begin{equation}
    \begin{tikzcd}[row sep=huge]
    B_{\mu}(\Lambda)\otimes B_{\mu}(\Lambda) \arrow[r,"d"] \arrow[d,hook] & B_{2\mu}(\Lambda) \arrow[d,hook]\\
    B_{\mu}(\bar{\mathbb{Q}}_{\ell})\otimes B_{\mu}(\bar{\mathbb{Q}}_{\ell}) \arrow[r,"d^{'}"] & B_{2\mu}(\bar{\mathbb{Q}}_{\ell})
    \end{tikzcd}
\end{equation}
The vertical arrows are inclusions by noting $(7.1)$ and the fact that $B_{\mu}(\Lambda)$ is a finite free $\Lambda$-module. The map $d$ is defined to map $b_1\otimes b_2$ to $g_{\mu,\mu}(b_1\otimes b_2)-g_{\mu,\mu}(b_2\otimes b_1)$. The map $d'$ is defined similarly. By Proposition $6.6$ and isomorphism $(7.1)$, diagram $(7.4)$ is commutative. By the construction of the commutativity constraint in $P_{L^+G}(Gr_G,\bar{\mathbb{Q}}_{\ell})$ in \cite{Zh2}, we conclude that $d$ is the zero map and the multiplication map in $B(\Lambda)$ is thus commutative. Thus, by a complete similar argument as in \cite[Proposition 2.16]{DM}, the category $\textnormal{Comod}_{B(\Lambda)}$ can be equipped with a commutativity constraint. This commutativity constraints then induces that of $\textnormal{Sat}_{G,\Lambda}$. Thus we have endowed $\textnormal{Sat}_{G,\Lambda}$ with a tensor category structure. Then a complete similar argument as in \cite[Proposition 13.4]{BR} shows that $B(\Lambda)$ admits an antipode.

\section{Identification of Group Schemes}
With the work in previous sections, we have constructed the category $P_{L^+G}(Gr_G,\Lambda)$, and equiped it with 
\begin{itemize}
    \item [(1)] the convolution product $\star$ and an associativity constraint,
    \item [(2)] the hypercohomology functor $\textnormal{H}^{*}:P_{L^+G}(Gr_G,\Lambda)\rightarrow \textnormal{Mod}_{\Lambda}$ which is $\Lambda$-linear, exact and faithful,
    \item [(3)] a commutativity constraint which makes $\textnormal{Sat}_{G,\Lambda}$ a tensor category,
    \item[(3)] a unit object $\textnormal{IC}_0$,
    \item[(4)] a bialgebra $B(\Lambda)$ such that $\textnormal{Sat}_{G,\Lambda}$ is equivalent to $\textnormal{Comod}_{B(\Lambda)}$ as tensor categories.
\end{itemize}
Note by Proposition $5.3$ $(2)$, $\textnormal{H}^{*}(P_Z(\mathbb{Z}_{\ell}))$ is a free $\mathbb{Z}_{\ell}$-module for any $Z\subset Gr_G$ consisting of a finite union of $L^+G$-orbits $Z$. We also know the representing object $P_Z(\mathbb{Z}_{\ell})$ is stable under base change by Proposition $5.3$ $(3)$. By our discussion in the previous section, we have following generalized Tannakian construction similar to \cite[Proposition 11.1]{MV2}
\begin{proposition}
The category
of representations of the group scheme $\widetilde{G}_{\mathbb{Z}_{\ell}}:=\textnormal{Spec}(B(\mathbb{Z}_{\ell}))$ which are finitely generated over $\mathbb{Z}_{\ell}$, is equivalent to $P_{L^+G}(Gr_G,\mathbb{Z}_{\ell})$ as tensor categories. Furthermore,
the coordinate ring of $\widetilde{G}_{\mathbb{Z}_{\ell}}$ is free over $\mathbb{Z}_{\ell}$ and $\widetilde{G}_{\mathbb{F}_{\ell}}=\textnormal{Spec}(\mathbb{F}_{\ell})\times_{\mathbb{Z}_{\ell}}\widetilde{G}_{\mathbb{Z}_{\ell}}$.
\end{proposition}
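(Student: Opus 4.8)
The plan is to assemble the structures produced in \S\S 5--7 into one application of the Tannakian formalism of \cite{DM}, following \cite[Proposition 11.1]{MV2} essentially verbatim but with the base ring $\mathbb{Z}_{\ell}$ in place of a field. By \S 7, $B(\mathbb{Z}_{\ell})=\varinjlim_{Z}B_{Z}(\mathbb{Z}_{\ell})$ carries a commutative, associative, unital multiplication together with a comultiplication, counit and antipode, so it is a Hopf algebra over $\mathbb{Z}_{\ell}$ and $\widetilde{G}_{\mathbb{Z}_{\ell}}:=\textnormal{Spec}(B(\mathbb{Z}_{\ell}))$ is an affine group scheme over $\mathbb{Z}_{\ell}$. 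Moreover Theorem 7.1, applied to each closed union of $L^{+}G$-orbits $Z$ and then passed to the colimit over $Z$, together with the passage from finitely presented $A_{Z}(\mathbb{Z}_{\ell})$-modules to $B_{Z}(\mathbb{Z}_{\ell})$-comodules, gives an equivalence of tensor categories $P_{L^{+}G}(Gr_{G},\mathbb{Z}_{\ell})\simeq\textnormal{Comod}_{B(\mathbb{Z}_{\ell})}$, where $\textnormal{Comod}$ denotes comodules whose underlying $\mathbb{Z}_{\ell}$-module is finitely generated.

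The first substantive step is to show that the coordinate ring $B(\mathbb{Z}_{\ell})$ is \emph{free} over $\mathbb{Z}_{\ell}$, not merely flat. For each $Z$, representability (Proposition 5.1, summed over $\nu\in\mathbb{X}_{\bullet}$, together with Proposition 4.4) identifies $\textnormal{Hom}(P_{Z}(\mathbb{Z}_{\ell}),\bullet)$ with $\textnormal{H}^{*}(\bullet)$, so the underlying module of $A_{Z}(\mathbb{Z}_{\ell})=\textnormal{End}(P_{Z}(\mathbb{Z}_{\ell}))^{op}$ is $\textnormal{H}^{*}(P_{Z}(\mathbb{Z}_{\ell}))$, which is finitely generated and free by Proposition 5.3(2); dualizing, $B_{Z}(\mathbb{Z}_{\ell})$ is finite free. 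For an inclusion $Y\subset Z$ the surjection $p^{Z}_{Y}\colon P_{Z}(\mathbb{Z}_{\ell})\twoheadrightarrow P_{Y}(\mathbb{Z}_{\ell})$ of Proposition 5.3(1) shows that the transition map $\iota_{Z}^{Y}\colon B_{Y}(\mathbb{Z}_{\ell})\hookrightarrow B_{Z}(\mathbb{Z}_{\ell})$ is injective, and since the formation of $P_{Z}(\bullet)$, $A_{Z}(\bullet)$ and $B_{Z}(\bullet)$ commutes with the base change $\mathbb{Z}_{\ell}\to\mathbb{F}_{\ell}$ by Proposition 5.3(3) and equation (7.1), the cokernel of $\iota_{Z}^{Y}$ has rank equal to the dimension of its reduction modulo $\ell$ and is therefore free. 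Thus $B_{Y}(\mathbb{Z}_{\ell})$ is a direct summand of $B_{Z}(\mathbb{Z}_{\ell})$, and $B(\mathbb{Z}_{\ell})$ is a filtered union of free $\mathbb{Z}_{\ell}$-modules along split monomorphisms, hence free; in particular $\widetilde{G}_{\mathbb{Z}_{\ell}}$ is flat over $\mathbb{Z}_{\ell}$.

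Granting freeness, the category of $B(\mathbb{Z}_{\ell})$-comodules with finitely generated underlying module is, by the standard dictionary between comodules over a flat Hopf algebra and representations of the corresponding affine group scheme (cf.\ \cite{DM}), exactly the category of representations of $\widetilde{G}_{\mathbb{Z}_{\ell}}$ on finitely generated $\mathbb{Z}_{\ell}$-modules; combined with the tensor equivalence of the first paragraph this gives the first assertion. For the last assertion, equation (7.1) yields $B(\mathbb{F}_{\ell})\cong B(\mathbb{Z}_{\ell})\otimes_{\mathbb{Z}_{\ell}}\mathbb{F}_{\ell}$, and this is an isomorphism of Hopf algebras because the multiplication maps $g_{\bullet,\bullet}$, the comultiplication, the unit and the antipode on the $B_{Z}$'s were all built from morphisms compatible with the base change of Proposition 5.3(3); applying $\textnormal{Spec}$ gives $\widetilde{G}_{\mathbb{F}_{\ell}}=\textnormal{Spec}(\mathbb{F}_{\ell})\times_{\textnormal{Spec}(\mathbb{Z}_{\ell})}\widetilde{G}_{\mathbb{Z}_{\ell}}$.

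I expect the main obstacle to be precisely the freeness of $B(\mathbb{Z}_{\ell})$ over $\mathbb{Z}_{\ell}$: a filtered colimit of finite free modules is in general only flat, so one genuinely needs the base-change compatibility (equation (7.1) and Proposition 5.3(3)) to conclude that each transition map has free cokernel and thereby realize $B(\mathbb{Z}_{\ell})$ as an increasing union of free direct summands. Everything else is the formal Tannakian dictionary of \cite{DM}, carried out over $\mathbb{Z}_{\ell}$ exactly as in \cite[\S 11]{MV2}.
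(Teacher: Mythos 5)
Your proposal is correct and takes essentially the same route as the paper, which at this point merely points to Proposition 5.3(2) (freeness of $\textnormal{H}^{*}(P_Z(\mathbb{Z}_{\ell}))$), Proposition 5.3(3) (base-change stability of the projectives), and the generalized Tannakian formalism of \cite[Proposition 11.1]{MV2}; your write-up fills in the split-injectivity and cofinal-chain argument for freeness of $B(\mathbb{Z}_{\ell})$ that the paper leaves implicit.
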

We are left to identify the group scheme $\widetilde{G}_{\mathbb{Z}_{\ell}}$ with the Langlands dual group $\hat{G}_{\mathbb{Z}_{\ell}}$.

Note reductive group schemes over $\mathbb{Z}_{\ell}$ are uniquely determined by their root datum. Then it suffices to prove the followings for our purpose
\begin{itemize}
    \item [(1)] $\widetilde{G}_{\mathbb{Z}_{\ell}}$ is smooth over $\mathbb{Z}_{\ell}$,
    \item[(2)] the group scheme $\widetilde{G}_{\bar{\mathbb{F}}_{\ell}}$ is reductive,
    \item[(3)] the dual split torus $\hat{T}_{\mathbb{Z}_{\ell}}$ is a maximal torus of $\widetilde{G}_{\mathbb{Z}_{\ell}}$.
\end{itemize}
In \S $7$, we showed that $B(\mathbb{Z}_{\ell})$ is a free $\mathbb{Z}_{\ell}$-modules. As a result, the group scheme $\widetilde{G}_{\mathbb{Z}_{\ell}}$ is affine flat over $\mathbb{Z}_{\ell}$. Then the affineness of $\widetilde{G}_{\mathbb{Z}_{\ell}}$ together with the statements $(1)$ and $(2)$ in this paragraph amount to the definition of a reductive group over $\mathbb{Z}_{\ell}$. Recall in \cite{PY}, a group scheme $\mathcal{G}$ over a discrete valuation ring $R$ with uniformizer $\pi$, field of fractions $K$, and residue field $\kappa$ is said to be \textit{quasi-reductive} if 
\begin{itemize}
    \item [(1)]$\mathcal{G}$ is affine flat over $R$,
    \item[(2)] $\mathcal{G}_{K}:=\mathcal{G}\otimes_R K$ is connected and smooth over $K$,
    \item[(3)] $\mathcal{G}_{\kappa}:=\mathcal{G}\otimes_R\kappa$ is of finite type over $\kappa$ and the neutral component $(\mathcal{G}_{\bar{\kappa}})^{\circ}_{\textnormal{red}}$ of the reduced geometric fibre is a reductive group of dimension equals $\textnormal{dim}\mathcal{G}_{K}$.
\end{itemize}
We will make use of the following theorem for quasi-reductive group schemes proved in \cite{PY}.
\begin{thm}
Let $\mathcal{G}$ be a quasi-reductive group scheme over $R$. Then
\begin{itemize}
    \item [(1)] $\mathcal{G}$ is of finite type over $R$
    \item[(2)] $\mathcal{G}_K$ is reductive
    \item[(3)] $\mathcal{G}_{\kappa}$ is connected.
\end{itemize}
In addition if
\begin{itemize}
    \item [(4)] \text{the type of }$\mathcal{G}_{\bar{K}}$ \text{ is of the same type as that of } $(\mathcal{G}_{\bar{\kappa}})^{\circ}_{\textnormal{red}}$,
\end{itemize}
then $\mathcal{G}$ is reductive.
.
\end{thm}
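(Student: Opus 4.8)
The plan is to follow the strategy of Prasad and Yu \cite{PY}. First I would reduce to the case that $R$ is a complete discrete valuation ring with separably closed residue field: the three conditions defining quasi-reductivity and each of the four conclusions are insensitive to replacing $R$ by its completion and then by a completed strict henselization, by faithfully flat descent (and, a group scheme possessing the identity section, ``connected'' and ``geometrically connected'' coincide for its fibres, so no subtlety arises there). After this reduction, write $A=\mathcal{O}(\mathcal{G})$; it is torsion-free, hence $R$-flat, the rings $A/\pi A$ and $A[1/\pi]$ are of finite type over $\kappa$ and $K$ respectively, and $A[1/\pi]$ is a domain because $\mathcal{G}_{K}$ is smooth and connected.

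The technical core is conclusion (1). I would lift a finite generating set of $A/\pi A$ into $A$, adjoin to it (after clearing $\pi$-denominators) a finite generating set of $A[1/\pi]$, and let $A_{0}\subseteq A$ be the finitely generated $R$-subalgebra so obtained, so that $A_{0}+\pi A=A$ and $A_{0}[1/\pi]=A[1/\pi]$. Iterating the first relation gives $A\subseteq A_{0}+\pi^{n}A$ for every $n$, so, $R$ being complete and $A$ a domain, showing $A_{0}=A$ reduces to bounding the $\pi$-adic denominators of elements of $A$ over $A_{0}$. This is precisely where the dimension equality $\dim\mathcal{G}_{K}=\dim(\mathcal{G}_{\bar\kappa})^{\circ}_{\mathrm{red}}$ built into quasi-reductivity is indispensable: without it the generic fibre may properly contain $\mathrm{Spec}(A_{0})$ with unbounded such denominators, exactly as for the $R$-flat affine group scheme $\mathrm{Spec}\,R[x,x/\pi,x/\pi^{2},\dots]$ whose generic fibre is $\mathbb{G}_{a}$ but whose special fibre collapses to a point; the equality of dimensions, together with the group law which makes $\mathcal{G}$ fibrewise homogeneous, forces $A_{0}=A$. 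I expect this to be the main obstacle. Once $\mathcal{G}$ is flat and of finite type over $R$, all of its fibres have dimension $\dim\mathcal{G}_{K}$.

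With finite type available, the remaining conclusions follow from specialization and schematic-closure arguments in the style of Raynaud's theory of flat $R$-group schemes. For (3): the fibrewise identity component $\mathcal{G}^{\circ}$ is open and, the fibres having boundedly many connected components, also closed, so the component scheme $\pi_{0}(\mathcal{G}/R)$ is finite étale over $R$ with one-point generic fibre (since $\mathcal{G}_{K}$ is connected), hence trivial; thus $\mathcal{G}_{\kappa}$ is connected. For (2): the existence of a nontrivial unipotent radical of $\mathcal{G}_{\bar K}$ would, through a careful analysis of the schematic closures of a maximal torus and of this radical, contradict the equality of fibre dimensions together with the reductivity of $(\mathcal{G}_{\bar\kappa})^{\circ}_{\mathrm{red}}$; hence $\mathcal{G}_{K}$ is reductive. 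Finally, under hypothesis (4), further work following \cite{PY} rules out the remaining pathologies --- an infinitesimal non-reduced part of $\mathcal{G}_{\kappa}$, and any mismatch of root data between the two fibres --- so that $\mathcal{G}$ becomes a smooth affine $R$-group scheme with connected reductive fibres; by the structure theory of reductive group schemes over a general base, such a $\mathcal{G}$ is reductive over $R$.
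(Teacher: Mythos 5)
The paper does not prove this theorem. It is imported directly from Prasad--Yu \cite{PY}, as the surrounding text makes explicit (``We will make use of the following theorem for quasi-reductive group schemes proved in \cite{PY}.''), and is then applied immediately. There is therefore no internal proof to compare your attempt against.

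Judged on its own terms, your proposal is a high-level sketch of the \cite{PY} strategy rather than a proof, and the crucial step is left unargued. You reduce to the complete strictly henselian case (fine) and set up $A_{0}\subseteq A$ with $A_{0}+\pi A=A$ and $A_{0}[1/\pi]=A[1/\pi]$, but the decisive claim --- that ``the equality of dimensions, together with the group law which makes $\mathcal{G}$ fibrewise homogeneous, forces $A_{0}=A$'' --- is an assertion, not an argument; you even flag it as ``the main obstacle'' without resolving it. Your counterexample $R[x,x/\pi,x/\pi^{2},\dots]$ shows that the dimension hypothesis is necessary, but it says nothing about how to use it, and ``fibrewise homogeneity'' does not by itself bound $\pi$-adic denominators. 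Even the formal scaffolding is incomplete: from $A\subseteq A_{0}+\pi^{n}A$ for all $n$ one cannot conclude $A_{0}=A$ by completeness alone; one would need, at minimum, $\pi$-adic separatedness of the a priori non-Noetherian ring $A$ and compatibility of the two $\pi$-adic filtrations, neither of which you establish. The actual Prasad--Yu route to finite type is structurally different: it passes through the existence of a closed $R$-torus subscheme extending a maximal torus of the generic fibre and a weight-space analysis of the coordinate ring, not through denominator-bounding starting from an arbitrary lift of generators.

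The arguments offered for the remaining conclusions are similarly placeholder-level: ``a careful analysis of the schematic closures'' for (2) and ``further work following \cite{PY}'' under hypothesis (4) are labels, not proofs. Part (3) is essentially correct once finite type and smoothness of the generic fibre are in hand. In short, the proposal correctly points at the right external reference but does not supply the hard content of any of its assertions.
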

As noted above, the requirement $(1)$ of quasi-reductiveness is satisfied by $\widetilde{G}_{\mathbb{Z}_{\ell}}$. In addition, by \cite{Zh2}, the group scheme $\widetilde{G}_{\mathbb{Q}_{\ell}}$ is connected reductive with root datum dual to that of $G$ and the condition $2$ of quasi-reductiveness is met.
\begin{lemma}
The group scheme $\widetilde{G}_{\bar{\mathbb{F}}_{\ell}}$ is connected.
\end{lemma}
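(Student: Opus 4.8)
The plan is to combine a standard base-change reduction with Tannakian duality. Since $\mathbb{F}_{\ell}$ is perfect, $\widetilde{G}_{\bar{\mathbb{F}}_{\ell}}$ is connected if and only if $\widetilde{G}_{\mathbb{F}_{\ell}}$ is (write $\widetilde{G}_{\mathbb{F}_{\ell}}$ as a filtered inverse limit of affine algebraic $\mathbb{F}_{\ell}$-groups; connectedness is detected on the base change to $\bar{\mathbb{F}}_{\ell}$ of each of them), and $\widetilde{G}_{\mathbb{F}_{\ell}}$ is connected if and only if it has no nontrivial finite quotient group scheme --- a disconnected $\widetilde{G}_{\mathbb{F}_{\ell}}$ would admit a disconnected affine algebraic quotient $H$, whose group of connected components $\pi_0(H)=:\Gamma$ is then a nontrivial finite \'etale $\mathbb{F}_{\ell}$-group receiving a surjection from $\widetilde{G}_{\mathbb{F}_{\ell}}$. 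Pulling back a faithful representation of $\Gamma$ along $\widetilde{G}_{\mathbb{F}_{\ell}}\twoheadrightarrow\Gamma$ and using the tensor equivalence $\textnormal{Sat}_{G,\mathbb{F}_{\ell}}\simeq\textnormal{Rep}(\widetilde{G}_{\mathbb{F}_{\ell}})$ established in \S\S 7--8, one obtains an object $X\in\textnormal{Sat}_{G,\mathbb{F}_{\ell}}$ all of whose tensor powers $X^{\star n}$ factor through $\Gamma$, so that their simple subquotients lie in the finite set of isomorphism classes of simple objects of $\textnormal{Rep}_{\mathbb{F}_{\ell}}(\Gamma)$. Hence it suffices to prove: \emph{if $X\in\textnormal{Sat}_{G,\mathbb{F}_{\ell}}$ is such that $\bigoplus_{n\geq 0}X^{\star n}$ has only finitely many isomorphism classes of simple subquotients, then $X\cong\textnormal{IC}_0^{\oplus m}$ for some $m\geq 0$}; granting this, such an $X$ is a trivial representation, forcing $\Gamma$ to act trivially on a faithful representation and hence $\Gamma=1$.

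To prove the displayed assertion I would exploit the total weight functor $\textnormal{CT}=\bigoplus_{\mu\in\mathbb{X}_{\bullet}}\textnormal{CT}_{\mu}$. By Proposition $4.4$ each $\textnormal{CT}_{\mu}$ is exact and $\textnormal{CT}$ is faithful, and by Proposition $6.3$ together with Lemma $6.1$ one has $\textnormal{CT}_{\nu}(X\star Y)\simeq\bigoplus_{\nu_1+\nu_2=\nu}\textnormal{CT}_{\nu_1}(X)\otimes_{\mathbb{F}_{\ell}}\textnormal{CT}_{\nu_2}(Y)$; together with associativity of $\star$ this is all I need. For $0\neq Z\in\textnormal{Sat}_{G,\mathbb{F}_{\ell}}$ set $\textnormal{wt}(Z):=\{\nu\in\mathbb{X}_{\bullet}\mid \textnormal{CT}_{\nu}(Z)\neq 0\}$, a finite nonempty set. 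Because $\mathbb{F}_{\ell}$ is a field, a tensor product of two nonzero finitely generated $\mathbb{F}_{\ell}$-modules is nonzero, so the displayed formula gives $\textnormal{wt}(X\star Y)=\textnormal{wt}(X)+\textnormal{wt}(Y)$ (Minkowski sum), whence $\textnormal{wt}(X^{\star n})=n\cdot\textnormal{wt}(X)$ for all $n\geq 1$. On the other hand, exactness of $\textnormal{CT}_{\nu}$ shows that $\textnormal{CT}_{\nu}(Z)\neq 0$ precisely when $\textnormal{CT}_{\nu}(L)\neq 0$ for some simple subquotient $L$ of $Z$, so $\textnormal{wt}(Z)=\bigcup_{L}\textnormal{wt}(L)$ with $L$ running over simple subquotients. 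Under the finiteness hypothesis the union $\bigcup_{n}\textnormal{wt}(X^{\star n})$ is therefore contained in a single finite subset of $\mathbb{X}_{\bullet}$ (a finite union of the finite sets $\textnormal{wt}(L)$). If $\textnormal{wt}(X)$ contained a nonzero $\nu_0$, then $\{n\nu_0\mid n\geq 1\}\subseteq\bigcup_{n}\textnormal{wt}(X^{\star n})$ would be an infinite subset of that finite set, a contradiction; hence $\textnormal{wt}(X)=\{0\}$.

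Finally I would deduce $X\cong\textnormal{IC}_0^{\oplus m}$ from $\textnormal{wt}(X)=\{0\}$. The simple objects of $\textnormal{Sat}_{G,\mathbb{F}_{\ell}}$ are exactly the $\textnormal{IC}_{\mu}$, $\mu\in\mathbb{X}_{\bullet}^{+}$ (the strata $Gr_{\mu}$ are simply connected, so the only equivariant local system on them is the constant one), and by exactness $\textnormal{wt}(\textnormal{IC}_{\mu})\subseteq\textnormal{wt}(X)=\{0\}$ for every simple subquotient $\textnormal{IC}_{\mu}$ of $X$. But $\textnormal{CT}_{\mu}(\textnormal{IC}_{\mu})\neq 0$ (indeed $\cong\mathbb{F}_{\ell}$, from $\textnormal{IC}_{\mu}|_{Gr_{\mu}}\simeq\mathbb{F}_{\ell}[(2\rho,\mu)]$ and the dimension estimates for $S_{\mu}\cap Gr_{\leq\mu}$ of \cite[Cor.\ 2.8]{Zh2}, exactly as in \cite{MV2}), so $\mu\in\textnormal{wt}(\textnormal{IC}_{\mu})$ and therefore $\mu=0$. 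Thus every composition factor of $X$ is $\textnormal{IC}_0$, so $X$ is supported on the point $Gr_{\leq 0}$, on which $L^{+}G$ acts trivially and $P_{L^{+}G}(Gr_{\leq 0},\mathbb{F}_{\ell})\simeq\textnormal{Mod}_{\mathbb{F}_{\ell}}$ is semisimple; hence $X\cong\textnormal{IC}_0^{\oplus m}$. The step I expect to be the main obstacle is the first one: cleanly extracting from a hypothetically disconnected $\widetilde{G}_{\bar{\mathbb{F}}_{\ell}}$ an honest finite quotient together with a faithful representation whose tensor powers have only finitely many simple constituents, since $\widetilde{G}_{\bar{\mathbb{F}}_{\ell}}$ is not yet known to be of finite type at this point of the argument; once that reduction is in place, the remainder is elementary combinatorics of the $\mathbb{X}_{\bullet}$-grading on cohomology.
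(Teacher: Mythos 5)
Your proof is correct and follows essentially the same strategy the paper uses: it reduces connectedness of $\widetilde{G}$ to the nonexistence of a nontrivial representation $X$ whose tensor subcategory $\left<X\right>$ is ``small,'' and then rules this out by showing that the $\mathbb{X}_{\bullet}$-support of $\textnormal{CT}(X^{\star n})$ grows linearly in $n$ unless $X$ is supported on $Gr_{\leq 0}$. The paper simply cites this criterion (via \cite[Cor.\ 2.11.2]{BR}) and the weight-growth argument (via \cite[\S 12]{MV2} and \cite[Lemma 9.3]{BR}) rather than reproducing them; you have unwound those citations, with the minor and harmless variant of replacing the abstract ``$\left<X\right>$ stable under $\otimes$'' criterion by the concrete reduction to a faithful representation of a finite \'etale quotient $\pi_0$ of some algebraic quotient of $\widetilde{G}_{\mathbb{F}_\ell}$, which cleanly handles the fact (you correctly flagged) that $\widetilde{G}$ is not yet known to be of finite type.
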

\begin{proof}
Note the same proof as in \cite[\S $12$]{MV2} and \cite[Lemma $9.3$]{BR} applies in our setting to show the Satake category $P_{L^+G}(Gr_G,\bar{\mathbb{F}}_{\ell})$ has no object $\mathcal{F}$ such that the subcategory $\left<\mathcal{F}\right>$, which is the strictly full subcategory of $P_{L^+G}(Gr_G,\bar{\mathbb{F}}_{\ell})$ whose objects are those isomorphic to a subquotient
of $\mathcal{F}^{\star n}$ for some $n\in \mathbb{N}$, is stable under $\star$. This is equivalent to the fact that there doesn't exist an object $X\in \textnormal{Rep}_{\bar{\mathbb{F}}_{\ell}}(\widetilde{G}_{\bar{\mathbb{F}}_{\ell}})$ such that $\left< X\right> $ is stable under $\bigotimes$ via Proposition $8.1$. Then by \cite[Corollary 2.11.2]{BR}, we conclude our proof.
\end{proof}

From now on, let $\kappa=\bar{\mathbb{F}}_{\ell}$. We have proved in Proposition $6.4$ that the monoidal structure of $\textnormal{H}^{*}$ is compatible with the weight functor decomposition. In other words, we get a monoidal functor 
$$
\textnormal{CT}:\textnormal{Sat}_{G,\mathbb{Z}_{\ell}}\longrightarrow\textnormal{Mod}_{\mathbb{Z}_{\ell}}(\mathbb{X}_{\bullet})\simeq \textnormal{Sat}_{T,\mathbb{Z}_{\ell}}.
$$
Base change to $\kappa$, the same reasoning yields a monoidal functor
$$
\textnormal{CT}:\textnormal{Sat}_{G,\kappa}\longrightarrow\textnormal{Mod}_{\kappa}(\mathbb{X}_{\bullet})\simeq \textnormal{Sat}_{T,\kappa}.
$$
Applying the construction in \S 7 to the above two Satake categories, we get a natural homomorphism $\hat{T}\rightarrow \widetilde{G}$. Note that by \cite[Corollary 2.8]{Zh2} and Proposition $5.3.(2)$, any $M\in \textnormal{Mod}_{\kappa}(\mathbb{X}_{\bullet})$ can be realized as a subquotient of some projective object in $\textnormal{Sat}_{G,\kappa}$. It then follows from \cite[Proposition 2.21(b)]{DM} that the homomorphism $\hat{T}\rightarrow \widetilde{G}$ is in fact a closed embedding, which realizes the dual torus $\hat{T}_{\kappa}$ as a subtorus of $\widetilde{G}_{\kappa}$. In addition, since $\widetilde{G}_{\mathbb{Z}_{\ell}}$ is flat, the same argument in \cite[\S 12]{MV2} applies to give the following dimension estimate
\begin{equation}
    \dim G=\dim \widetilde{G}_{\mathbb{Q}_{\ell}}\geq \dim (\widetilde{G}_{\kappa})_{\textnormal{red}}.
\end{equation}

We can write $\widetilde{G}_{\kappa}=\varprojlim \widetilde{G}^*_{\kappa}$ where  $\widetilde{G}^*_{\kappa}$ satisfies the following conditions
\begin{itemize}
    \item [(1)] $\widetilde{G}^*_{\kappa}$ is of finite type,
    \item [(2)] the canonical map $\textnormal{Irr}_{\widetilde{G}^*_{\kappa}}\rightarrow \textnormal{Irr}_{\widetilde{G}_{\kappa}}$ is a bijection, where Irr denotes the set of irreducible representations.
\end{itemize}
In addition, we require that the transition morphisms are surjective. The first requirement may be satisfied since any group
scheme is a projective limit of group schemes of finite type. To ensure that condition $(2)$ can be satisfied,
it is enough to choose $\widetilde{G}^*_{\kappa}$ sufficiently large so that the irreducible representations $L(\eta)$ associated to a finite set of generators $\eta$ of the semigroup of dominant cocharacters $\mathbb{X}_{\bullet}^+$, are pull-backs of representations of $\widetilde{G}^*_{\kappa}$. For any $\mu,\nu\in\mathbb{X}_{\bullet}^+$, the sheaf $\textnormal{IC}_{\mu+\nu}$ supports on $Gr_{\leq\mu+\nu}$ and hence is a subquotient of $\textnormal{IC}_{\mu}\star\textnormal{IC}_{\nu}$. Thus all irreducible representations of $\widetilde{G}_{\kappa}$ come from $\widetilde{G}_{\kappa}^*$. By our choice of the finite type quotients, we have $(\widetilde{G}_{\kappa})_{\textnormal{red}}=\varprojlim (\widetilde{G}_{\kappa}^*)_{\textnormal{red}}$. In addition, the composition of maps $\hat{T}_{\kappa}\rightarrow \widetilde{G}_{\kappa}\rightarrow \widetilde{G}^*_{\kappa}$ is a closed embedding. 
\\

We claim that
\begin{equation}
\
    \text{Each finite type quotient } \widetilde{G}_{\kappa}^* \text{ is connected, reductive, and isomorphic to } \hat{G}_{\kappa}.
\end{equation}
If $(8.2)$ holds, then the arguments in \cite{MV3} apply and yield $\widetilde{G}_{\kappa}^*= (\hat{G})_{\kappa}$. Thus we deduce that condition $(3)$ of the quasi-reductiveness and condition $(4)$ in Theorem $8.2$ are satisfied by $\widetilde{G}_{\kappa}$ and we complete the identification of group schemes by Theorem $8.2$. Next, we prove $(8.2)$ following the appraoch given in \cite[\S 12]{MV2}.
\\

Write $H$ for the reductive quotient of $(\widetilde{G}_{\kappa}^*)_{\textnormal{red}}$ and we have $\hat{T}_{\kappa}\rightarrow H$ is a closed embedding.
Note any irreducible representation of $(\widetilde{G}_{\kappa}^*)_{\textnormal{red}}$ is trivial on the unipotent radical, we have:
\begin{equation}
    \text{The canonical map }\textnormal{Irr}_H\rightarrow \textnormal{Irr}_{(\widetilde{G}_{\kappa}^*)_{\textnormal{red}}}\text{ is a bijection.}
\end{equation}
We first note the following lemma.
\begin{lemma}
The subtorus $\hat{T}_{\kappa}$ is a maximal subtorus of $H$.
\end{lemma}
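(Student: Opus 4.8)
The plan is to follow the strategy of \cite[\S 12]{MV2}. Choose a maximal torus $T_{H}$ of $H$ with $\hat{T}_{\kappa}\subseteq T_{H}$ (every torus of a connected reductive group lies in a maximal one). Since $\hat{T}_{\kappa}$ is a closed connected subgroup of the connected torus $T_{H}$, it suffices to prove that $\dim \hat{T}_{\kappa}=\dim T_{H}$; equivalently, writing $r\colon \mathbb{X}^{\bullet}(T_{H})\to \mathbb{X}^{\bullet}(\hat{T}_{\kappa})=\mathbb{X}_{\bullet}$ for the restriction of characters, it suffices to show that $r$ is injective, since $r$ is already surjective because $\hat{T}_{\kappa}\hookrightarrow T_{H}$ is a closed immersion of split tori over $\kappa$.

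By the construction of the finite type quotients $\widetilde{G}_{\kappa}^{*}$ and the fact that irreducible representations are trivial on the unipotent radical, $\textnormal{Irr}_{H}$ is in bijection with $\textnormal{Irr}_{\widetilde{G}_{\kappa}}=\{\textnormal{IC}_{\mu}\mid\mu\in\mathbb{X}_{\bullet}^{+}\}$; let $L_{\mu}$ denote the irreducible representation of $H$ corresponding to $\textnormal{IC}_{\mu}$, and let $\psi(\mu)\in\mathbb{X}^{\bullet}(T_{H})^{+}$ be its highest weight with respect to a Borel $B_{H}\supseteq T_{H}$ to be chosen below. As $\psi$ is the composite of the bijections $\mathbb{X}_{\bullet}^{+}\leftrightarrow \textnormal{Irr}_{H}\leftrightarrow \mathbb{X}^{\bullet}(T_{H})^{+}$, it is a bijection. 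Because the homomorphism $\hat{T}_{\kappa}\to\widetilde{G}_{\kappa}$ was produced from the monoidal functor $\textnormal{CT}$ (Proposition $6.4$ and the construction of \S 7), the restriction of $L_{\mu}$ to $\hat{T}_{\kappa}$ equals $\textnormal{CT}(\textnormal{IC}_{\mu})=\bigoplus_{\nu}\textnormal{H}_{c}^{(2\rho,\nu)}(S_{\nu},\textnormal{IC}_{\mu})$. By \cite[Corollary 2.8]{Zh2}, $\textnormal{CT}_{\nu}(\textnormal{IC}_{\mu})\neq 0$ forces $\nu\leq\mu$; and since $S_{\mu}\cap Gr_{\mu}$ is a single point with $\textnormal{IC}_{\mu}|_{Gr_{\mu}}\simeq\Lambda[(2\rho,\mu)]$, we get $\textnormal{CT}_{\mu}(\textnormal{IC}_{\mu})\simeq\Lambda$. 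Hence $L_{\mu}|_{\hat{T}_{\kappa}}$ has $\mu$ as a weight of multiplicity one and all of its weights $\nu$ satisfy $\nu\leq\mu$.

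Now choose $B_{H}$. Fix a regular dominant weight $\xi\in\mathbb{X}^{\bullet}$ of $G$, say $\xi=2\rho$, so that $\langle\xi,\mu-\nu\rangle>0$ whenever $\nu<\mu$ in $\mathbb{X}_{\bullet}$. Pulling the functional $\langle\xi,-\rangle\colon\mathbb{X}_{\bullet}\to\mathbb{Z}$ back along the surjection $r$ gives a cocharacter $\tilde{\xi}\in\mathbb{X}_{\bullet}(T_{H})$ characterized by $\langle\tilde{\xi},\chi\rangle=\langle\xi,r(\chi)\rangle$ for all $\chi\in\mathbb{X}^{\bullet}(T_{H})$. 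Let $P_{H}$ be the parabolic of $H$ attached to $\tilde{\xi}$, and take any Borel $T_{H}\subseteq B_{H}\subseteq P_{H}$; then $\langle\tilde{\xi},\alpha\rangle\geq 0$ for every root $\alpha$ of $B_{H}$, so $\langle\tilde{\xi},\chi\rangle\leq\langle\tilde{\xi},\psi(\mu)\rangle$ for every $T_{H}$-weight $\chi$ of $L_{\mu}$. Translating through $r$, this says $\langle\xi,r(\psi(\mu))\rangle=\max_{\nu}\langle\xi,\nu\rangle$, the maximum over the weights $\nu$ of $L_{\mu}|_{\hat{T}_{\kappa}}$. By the previous paragraph these all satisfy $\nu\leq\mu$ and $\mu$ occurs, so the maximum equals $\langle\xi,\mu\rangle$ and is attained only at $\nu=\mu$. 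Therefore $r(\psi(\mu))=\mu$ for every $\mu\in\mathbb{X}_{\bullet}^{+}$.

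Consequently $r\circ\psi=\textnormal{id}_{\mathbb{X}_{\bullet}^{+}}$, and since $\psi$ is a bijection onto $\mathbb{X}^{\bullet}(T_{H})^{+}$, the map $r$ restricts to a bijection $\mathbb{X}^{\bullet}(T_{H})^{+}\to\mathbb{X}_{\bullet}^{+}$. As every character of $T_{H}$ is a difference of two dominant ones, this gives $\ker r=0$: if $\chi_{1}-\chi_{2}\in\ker r$ with $\chi_{i}$ dominant, then $r(\chi_{1})=r(\chi_{2})$ forces $\chi_{1}=\chi_{2}$. Hence $r$ is an isomorphism, $\dim\hat{T}_{\kappa}=\dim T_{H}$, and $\hat{T}_{\kappa}=T_{H}$ is a maximal torus of $H$. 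The only genuine subtlety is the input used in the second paragraph: that the construction of $\hat{T}_{\kappa}\to\widetilde{G}_{\kappa}$ is compatible with the weight functor decomposition, so that $L_{\mu}|_{\hat{T}_{\kappa}}=\textnormal{CT}(\textnormal{IC}_{\mu})$, together with the multiplicity-one statement for the extremal weight, which is the mixed-characteristic Mirkovi\'c--Vilonen theory of \cite{Zh2}; granting these, the remainder is the elementary weight combinatorics above.
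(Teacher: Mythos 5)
Your proof is correct, and it reaches the same conclusion as the paper's but by a noticeably more explicit route. The paper's proof is terse: it records the bijection $\mathbb{X}^{\bullet}(T_H)/W_H\simeq\mathbb{X}_{\bullet}(T)/W_G$ coming from matching up irreducible objects, and then appeals to the faithfulness of the Weyl group action on the maximal torus to deduce $\mathbb{X}^{\bullet}(T_H)=\mathbb{X}_{\bullet}(T)$, without spelling out how this matching interacts with the restriction map $r$. You instead make the interaction explicit: after choosing $B_H$ to be adapted to a cocharacter $\tilde\xi$ pulled back along $r$ from a regular dominant $\xi$ (which guarantees dominance for $H$ is detected by $\xi$ after restriction), you use the Mirkovi\'c--Vilonen input from \cite[Corollary 2.8]{Zh2} -- that $\textnormal{CT}(\textnormal{IC}_\mu)$ has $\mu$ as a multiplicity-one extremal weight with all other weights $\leq\mu$ -- to prove $r(\psi(\mu))=\mu$ on the nose, and then deduce $\ker r=0$ from the fact that every character of $T_H$ is a difference of two dominant ones. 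This is essentially a careful expansion of the paper's sketch: your adapted choice of $B_H$ and the explicit MV-theory step supply exactly the compatibility between the two dominance notions that the paper's faithfulness argument leaves implicit, and your version would survive closer scrutiny. Both approaches buy the same thing; yours costs a bit more writing but is self-contained and pins down where the Satake-side geometry (highest weight multiplicity one, support bound $\nu\leq\mu$) actually enters.
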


 \begin{proof}

Choose a maximal torus $T_H$ for $H$ and denote its Weyl group $W_H$. Then the irreducible representations of $H$ are parametrized by $\mathbb{X}^{\bullet}(T_H)/W_H$. On the other hand, write the Weyl group for $G$ by $W_G$, then Proposition $2.3$ implies that $\mathbb{X}_{\bullet}(T_{\kappa})/W_G$ parametrizes Schubert cells in $Gr_{G_{\kappa}}$. The IC-sheaf attached to each Schubert cell is an irreducible object in the Satake category and thus gives rise to an irreducible representation of $\widetilde{G}_{\kappa}$. By our choice of $\widetilde{G}^*_{\kappa}$ and $(8.3)$, we get a bijection $\mathbb{X}^{\bullet}(T_H)/W_H\simeq \mathbb{X}_{\bullet}(T)/W_G$. Hence, $T_H/W_H\simeq \hat{T}_{\kappa}/W_G$. Note that the Weyl group acts faithfully on the maximal torus, we conclude that $\mathbb{X}^{\bullet}(T_H)=\mathbb{X}_{\bullet}(T)$ and $\hat{T}_{\kappa}$ is a maximal torus in $H$.

 \end{proof}
From now on, we write $W_H$ for the Weyl group of $H$ with respect to $\hat{T}_{\kappa}$. Recall a (co)character of a reductive group is called regular if the cardinality of its orbit under the Weyl group action attains the maximum. Then $2\rho$ is a regular character in $G$ with respect to $T$. By the proof of Lemma $8.4$, it is a cocharacter in $H$ with respect to $\hat{T}_{\kappa}$. In addition, the proof of Lemma $8.4$ also shows that $W_H\cdot 2\rho=W_G\cdot 2\rho$ and thus the Weyl group orbit $W_H\cdot 2\rho$ has maximal cardinality and it follows that $2\rho$ is a regular cocharacter in $H$. Thus $2\rho$ fixes a Borel $B_H$ which only depends on the Weyl chamber containing $2\rho$. It also fixes a set of positive roots.  
\\

From the proof of Lemma $8.4$, we deduce the followings
\begin{equation}
    \text{the (dominant)weights of } (H,B_H,\hat{T}_{\kappa})\text{ coincide with }\text{(dominant) coweights of }G.
\end{equation}
\begin{equation}
   W_H \text{ coincides with } W_G \text{ together with their subsets of simple reflections identified}.
\end{equation}

To show $(8.2)$, we hope to prove the following  
\begin{equation}
    \Delta (H,B_H,\hat{T}_{\kappa})=\Delta^{\vee}(G,B,T)
\text{ and } \Delta^{\vee} (H,B_H,\hat{T}_{\kappa})=\Delta (G,B,T).
\end{equation}
We first prove a weaker version of the first equality in $(8.6)$.

\begin{lemma}
Assume $G$ to be semisimple, then statement $(8.6)$ holds.
\end{lemma}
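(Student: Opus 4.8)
The plan is to reduce to the case that $G$ is of adjoint type, and then, for adjoint $G$, to pin down the root system of $H$ by combining the equality of dominant cones $(8.4)$, the identification of Weyl groups $(8.5)$, and an integrality constraint coming from the center of the dual group. For the reduction, assuming $(8.6)$ known for adjoint groups and $G$ semisimple, put $G_{\mathrm{ad}}=G/Z(G)$, $T_{\mathrm{ad}}=T/Z(G)$; running the constructions of \S\S3--7 for $G_{\mathrm{ad}}$ produces $\widetilde{(G_{\mathrm{ad}})}_{\kappa}$ and its reductive quotient $H_{\mathrm{ad}}$, whose root datum with respect to $\hat T_{\mathrm{ad},\kappa}$ is dual to that of $(G_{\mathrm{ad}},T_{\mathrm{ad}})$. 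By \cite[Proposition 1.21]{Zh2} the category $\textnormal{Sat}_{G_{\mathrm{ad}},\kappa}$ is graded by $\pi_1(G_{\mathrm{ad}})$ compatibly with $\star$, and Lemma $4.6$ identifies $\textnormal{Sat}_{G,\kappa}$ with the full tensor subcategory of objects graded by the subgroup $\pi_1(G)\subseteq\pi_1(G_{\mathrm{ad}})$; by Tannakian duality this exhibits $\widetilde G_{\kappa}$ as a central quotient of $\widetilde{(G_{\mathrm{ad}})}_{\kappa}$ with kernel dual to $\pi_1(G_{\mathrm{ad}})/\pi_1(G)$. Passing to reductive quotients, $H$ is $H_{\mathrm{ad}}$ modulo a central subgroup, and the standard behaviour of root data under a central isogeny transports $(8.6)$ from $G_{\mathrm{ad}}$ to $G$. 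So from now on $G$ is adjoint.

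In the adjoint case, $\Delta_s(G,B,T)$ is a $\mathbb{Z}$-basis of $\mathbb{X}^{\bullet}(T)$ and $\Delta^{\vee}_s(G,B,T)$ is a $\mathbb{Z}$-basis of $\mathbb{Z}\cdot\Delta^{\vee}(G,T)$; moreover $\mathbb{X}^{\bullet}(\hat T_{\kappa})=\mathbb{X}_{\bullet}(T)$ and $\mathbb{X}_{\bullet}(\hat T_{\kappa})=\mathbb{X}^{\bullet}(T)$ with the tautological perfect pairing, and by Lemma $8.4$ the torus $\hat T_{\kappa}$ is maximal in the connected reductive group $H$, with $W_H=W_G$ and simple reflections identified by $(8.5)$; in particular $\Delta_s(G,B,T)$ and $\Delta_s(H,B_H,\hat T_{\kappa})$ are matched bijectively. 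Fix $\alpha\in\Delta_s(G,B,T)$ with matching simple root $\gamma_{\alpha}\in\Delta_s(H,B_H,\hat T_{\kappa})$. By $(8.5)$ the reflections attached to $\alpha$ and to $\gamma_{\alpha}$ coincide as automorphisms of $\mathbb{X}_{\bullet}(T)=\mathbb{X}^{\bullet}(\hat T_{\kappa})$; equating $\mu\mapsto\mu-\langle\alpha,\mu\rangle\alpha^{\vee}$ with $\mu\mapsto\mu-\langle\gamma_{\alpha}^{\vee},\mu\rangle\gamma_{\alpha}$ and using nondegeneracy forces $\gamma_{\alpha}=c_{\alpha}\alpha^{\vee}$ and $\gamma_{\alpha}^{\vee}=c_{\alpha}^{-1}\alpha$ for a scalar $c_{\alpha}\in\mathbb{Q}$. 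Since $(8.4)$ says the dominant cones of $H$ and of $G$ coincide, so do their defining rays, hence $\mathbb{Q}_{\geq 0}\gamma_{\alpha}^{\vee}$ is a ray of a simple root of $G$; as $-\alpha$ is not a simple root, $c_{\alpha}>0$.

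It then remains to prove $c_{\alpha}=1$. From $\gamma_{\alpha}^{\vee}=c_{\alpha}^{-1}\alpha\in\mathbb{X}_{\bullet}(\hat T_{\kappa})=\mathbb{X}^{\bullet}(T)$ and the fact that $\alpha$ is a member of a $\mathbb{Z}$-basis of $\mathbb{X}^{\bullet}(T)$, we get $c_{\alpha}^{-1}\in\mathbb{Z}_{>0}$. For the reverse bound one establishes, as in the first step of \cite[\S 12]{MV2}: the $\pi_1(G)$-grading of $\textnormal{Sat}_{G,\kappa}$ realizes $\hm(\mathbb{X}_{\bullet}(T)/\mathbb{Z}\cdot\Delta^{\vee}(G,T),\mathbb{G}_m)$ as a central subgroup of $\widetilde G_{\kappa}$, hence of $H$, hence (since $\hat T_{\kappa}$ is maximal) of $\hat T_{\kappa}$, and comparing character groups gives $\mathbb{Z}\cdot\Delta(H,\hat T_{\kappa})\subseteq\mathbb{Z}\cdot\Delta^{\vee}(G,T)$. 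Thus $\gamma_{\alpha}=c_{\alpha}\alpha^{\vee}\in\mathbb{Z}\cdot\Delta^{\vee}(G,T)$, and since $\alpha^{\vee}$ is a member of a $\mathbb{Z}$-basis of this lattice, $c_{\alpha}\in\mathbb{Z}_{>0}$; hence $c_{\alpha}=1$, i.e. $\gamma_{\alpha}=\alpha^{\vee}$ and $\gamma_{\alpha}^{\vee}=\alpha$. Letting $\alpha$ range over $\Delta_s(G,B,T)$ gives $\Delta_s(H,B_H,\hat T_{\kappa})=\Delta^{\vee}_s(G,B,T)$ and $\Delta^{\vee}_s(H,B_H,\hat T_{\kappa})=\Delta_s(G,B,T)$, and applying the common Weyl group $W_H=W_G$ (each root, resp. coroot, being conjugate to a simple one) yields $\Delta(H,B_H,\hat T_{\kappa})=\Delta^{\vee}(G,B,T)$ and $\Delta^{\vee}(H,B_H,\hat T_{\kappa})=\Delta(G,B,T)$, which is $(8.6)$.

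The step that really requires work is $c_{\alpha}=1$: the data $(8.4)$, $(8.5)$ and Lemma $8.4$ by themselves do not determine the root system of $H$ — already for $G=\mathrm{PGL}_2$ they are equally consistent with $H=\mathrm{PGL}_2$, whereas one needs $H=\mathrm{SL}_2$ — and it is precisely the lattice containment $\mathbb{Z}\cdot\Delta(H,\hat T_{\kappa})\subseteq\mathbb{Z}\cdot\Delta^{\vee}(G,T)$, coming from Zhu's $\pi_1$-grading of the Satake category, that excludes the wrong answer. A secondary technical point is the bookkeeping, in the reduction to the adjoint case, of how Langlands duality transforms under the central isogeny $G\to G_{\mathrm{ad}}$.
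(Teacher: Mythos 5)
Your proof is correct, and it coincides in all essentials with the \emph{second} proof of this lemma that the paper records (the one following Lemmas $8.6$--$8.9$, introduced with ``Lemma $8.5$ may also be proved following the idea of \cite[\S 14]{BR}''): reduce to the adjoint case via the central isogeny $G\to G_{\mathrm{ad}}$ and the compatible $\pi_1$-grading of the Satake categories; use that $\Delta_s(G)$ is a $\mathbb{Z}$-basis of $\mathbb{X}^{\bullet}(T)$ when $G$ is adjoint to bound the scalar one way; and use the lattice inclusion $\mathbb{Z}\cdot\Delta(H,\hat T_{\kappa})\subseteq\mathbb{Z}\cdot\Delta^{\vee}(G,T)$ coming from the $\pi_1$-grading (the paper's Lemma $8.6$) to bound it the other way. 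What you did not do is the argument the paper gives immediately after the statement of Lemma $8.5$, namely comparing the two lattice-level ``dual cones'' in equations $(8.7)$--$(8.8)$. That comparison does not, as written, hold for general semisimple $G$: already for $G=\mathrm{SL}_2$ one has $\mathbb{X}^{\bullet}(T)=\mathbb{Z}\omega$, $\alpha=2\omega$, and the right-hand side of $(8.7)$ is $\mathbb{Z}_{\geq 0}\omega$ rather than $\mathbb{Z}_{\geq 0}\alpha$; the identity only becomes an equality of cones over $\mathbb{Q}$, which determines the rays but not the root elements, and that is exactly the $\mathrm{SL}_2$-versus-$\mathrm{PGL}_2$ indeterminacy you point out. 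So your recourse to the $\pi_1$-grading is not merely an alternative route but the step that actually closes the argument, and it is the route the paper itself ultimately takes in its Lemmas $8.6$--$8.9$ and the ``second proof.''
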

\begin{proof}
Since $G$ is assumed to be semisimple, then $\mathbb{Q}\cdot\mathbb{X}_{\bullet}^+(T)=\mathbb{Q}\cdot \Delta^{\vee}(G,B,T)$. Hence, 
\begin{equation}
\mathbb{Z}_{\geq 0}\cdot\Delta_s(G,B,T)=\{\alpha\in \mathbb{X}^{\bullet}(T)\vert \big<\alpha,\lambda\big>\geq 0\text{ for all }\lambda\in \mathbb{X}_{\bullet}^+(T)\}. 
\end{equation}
On the other hand, it follows from $(8.5)$ that $W_H$ and $W_G$ have the same cardinality. Together with $(8.4)$, we conclude that $H$ is also semisimple. Thus, 
\begin{equation}
\mathbb{Z}_{\geq 0}\cdot\Delta^{\vee}_s(H,B_H,\hat{T}_{\kappa})=\{\alpha^{\vee}\in \mathbb{X}_{\bullet}(\hat{T}_{\kappa})\vert \big<\alpha,\lambda\big>\geq 0\text{ for all }\lambda\in \mathbb{X}^{\bullet}_+(\hat{T}_{\kappa})\}. 
\end{equation}
Comparing $(8.7)$ and $(8.8)$, we have 
$$
\mathbb{Z}_{\geq 0}\cdot\Delta_s(G,B,T)=\mathbb{Z}_{\geq 0}\cdot\Delta^{\vee}_s(H,B_H,\hat{T}_{\kappa}).
$$
Thus, $\Delta_s(G,B,T)=\Delta^{\vee}_s(H,B_H,\hat{T}_{\kappa})$ and we conclude that $\Delta(G,B,T)=\Delta^{\vee}(H,B_H,\hat{T}_{\kappa})$ by noting $(8.5)$. Finally, since for a semisimple reductive group, the coroots are uniquely determined by roots and vice versa, we also conlcude that $\Delta^{\vee}(G,B,T)=\Delta(H,B_H,\hat{T}_{\kappa})$.

\end{proof}
In fact, Lemma $8.5$ may also be proved following the idea of \cite[\S 14]{BR}\footnote{We note that the situation considered in \cite[\S 14]{BR} is slightly different from ours. In the equal characteristic situation considered in \textit{loc.cot}, the group scheme $\widetilde{G}_{\kappa}$ is proved to be algebraic by directly exhibiting a tensor generator of the Satake category. Then, there is no need to pass to finite type quotient $\widetilde{G}_{\kappa}^{*}$ as we do in this section. } and \cite[\S 12]{MV2} and we sketch this approach here.  
\begin{lemma}
We have the following inclusion of lattices
\begin{equation}
    \mathbb{Z}\cdot \Delta (H,\hat{T}_{\kappa})\subseteq \mathbb{Z}\cdot \Delta^{\vee}(G,T)
\end{equation}
for general $G$.
\end{lemma}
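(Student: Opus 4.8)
The plan is to realize the connected-component grading of the Satake category as a central diagonalizable subgroup of $\widetilde{G}_{\kappa}$ lying inside $\hat{T}_{\kappa}$, and then to compare it with the center of $H$.

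First I would recall from \cite[Proposition 1.21]{Zh2} that $\pi_0(Gr_{G})\simeq\pi_1(G)=\mathbb{X}_{\bullet}(T)/\mathbb{Z}\cdot\Delta^{\vee}(G,T)$, so that the decomposition of $Gr_{G}$ into connected components gives a grading $\textnormal{Sat}_{G,\kappa}=\bigoplus_{c\in\pi_1(G)}\textnormal{Sat}_{G,\kappa}^{(c)}$ which is compatible with the convolution product $\star$ and with the commutativity constraint of \S7 (a sheaf supported on $Gr_{\leq\mu}$ is homogeneous of degree $\bar{\mu}$). Through the tensor equivalence $\textnormal{Sat}_{G,\kappa}\simeq\textnormal{Rep}_{\kappa}(\widetilde{G}_{\kappa})$, such a grading corresponds by the Tannakian formalism to a homomorphism with central image $\zeta\colon D_{\kappa}\to\widetilde{G}_{\kappa}$, where $D_{\kappa}=\textnormal{Spec}\,\kappa[\pi_1(G)]$ is the diagonalizable group with character group $\pi_1(G)$; concretely, $D_{\kappa}$ acts on $\textnormal{H}^{*}(\mathcal{F})$ through the character $c$ when $\mathcal{F}$ is homogeneous of degree $c$. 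Since $\textnormal{CT}_{\nu}(\mathcal{F})=0$ unless $\nu$ has image $c$ in $\pi_1(G)$ whenever $\mathcal{F}$ is supported on the component indexed by $c$ (as $S_{\nu}$ meeting $Gr_{\lambda}\subseteq Gr_{G}^{(c)}$ forces $\nu\leq\lambda$, hence $\lambda-\nu\in\mathbb{Z}_{\geq0}\cdot\Delta^{\vee}_{+}$), the $\hat{T}_{\kappa}$-weight decomposition provided by $\textnormal{CT}=\bigoplus_{\nu}\textnormal{CT}_{\nu}$ refines the $D_{\kappa}$-weight decomposition along the natural surjection $\mathbb{X}^{\bullet}(\hat{T}_{\kappa})=\mathbb{X}_{\bullet}(T)\twoheadrightarrow\pi_1(G)$. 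I would then invoke Tannakian functoriality (uniqueness of a homomorphism inducing a given restriction functor on representations) to conclude that $\zeta$ factors as $D_{\kappa}\hookrightarrow\hat{T}_{\kappa}\to\widetilde{G}_{\kappa}$, the first arrow being the closed immersion dual to $\mathbb{X}_{\bullet}(T)\twoheadrightarrow\pi_1(G)$, i.e.\ $D_{\kappa}=\bigcap_{\alpha^{\vee}\in\Delta^{\vee}(G,T)}\ker\!\big(\alpha^{\vee}\colon\hat{T}_{\kappa}\to\mathbb{G}_{m}\big)$.

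Next I would transport this to a finite type quotient $\widetilde{G}^{*}_{\kappa}$ as fixed in \S8. Since $\widetilde{G}_{\kappa}\twoheadrightarrow\widetilde{G}^{*}_{\kappa}$ and $\zeta$ has central image, the image of $D_{\kappa}$ in $\widetilde{G}^{*}_{\kappa}$ is central; moreover $D_{\kappa}\subseteq\hat{T}_{\kappa}$, and $\hat{T}_{\kappa}$, being reduced, maps into $(\widetilde{G}^{*}_{\kappa})_{\textnormal{red}}$, where it is the closed subtorus realizing $\hat{T}_{\kappa}$ as a maximal torus of the reductive quotient $H$. Therefore $D_{\kappa}\subseteq\hat{T}_{\kappa}\subseteq(\widetilde{G}^{*}_{\kappa})_{\textnormal{red}}$ is a central subgroup scheme of $(\widetilde{G}^{*}_{\kappa})_{\textnormal{red}}$, its image under the quotient by the unipotent radical $(\widetilde{G}^{*}_{\kappa})_{\textnormal{red}}\twoheadrightarrow H$ is central, and this image is $D_{\kappa}$ itself because $\hat{T}_{\kappa}\hookrightarrow H$ is a closed immersion. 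Hence $D_{\kappa}\subseteq Z(H)$.

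Finally, $Z(H)=\bigcap_{\alpha\in\Delta(H,\hat{T}_{\kappa})}\ker\alpha\subseteq\hat{T}_{\kappa}$ has character group $\mathbb{X}^{\bullet}(Z(H))=\mathbb{X}^{\bullet}(\hat{T}_{\kappa})/\mathbb{Z}\cdot\Delta(H,\hat{T}_{\kappa})=\mathbb{X}_{\bullet}(T)/\mathbb{Z}\cdot\Delta(H,\hat{T}_{\kappa})$, and the inclusion $D_{\kappa}\subseteq Z(H)$ of subgroups of $\hat{T}_{\kappa}$ induces, on character groups, a surjection $\mathbb{X}_{\bullet}(T)/\mathbb{Z}\cdot\Delta(H,\hat{T}_{\kappa})\twoheadrightarrow\mathbb{X}_{\bullet}(T)/\mathbb{Z}\cdot\Delta^{\vee}(G,T)$ compatible with the two quotient maps out of $\mathbb{X}_{\bullet}(T)$; the existence of this compatible surjection forces $\mathbb{Z}\cdot\Delta(H,\hat{T}_{\kappa})\subseteq\mathbb{Z}\cdot\Delta^{\vee}(G,T)$, which is the assertion. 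I expect the main obstacle to be the bookkeeping in the third paragraph: one must ensure that the possibly non-reduced group $D_{\kappa}$ (which happens exactly when $\pi_1(G)$ has $\ell$-torsion) nonetheless embeds as a central closed subgroup of $H$, and this is precisely guaranteed by the fact that $D_{\kappa}$ already lies inside the torus $\hat{T}_{\kappa}$, which by construction sits inside $(\widetilde{G}^{*}_{\kappa})_{\textnormal{red}}$ and closed-embeds into $H$, so that no reduction step is ever applied directly to $D_{\kappa}$. A secondary point needing care is the ``refinement of gradings'' step producing the factorization $D_{\kappa}\hookrightarrow\hat{T}_{\kappa}\to\widetilde{G}_{\kappa}$, which is cleanest to verify at the level of the Hopf algebras $B(\kappa)$, $\kappa[\mathbb{X}_{\bullet}(T)]$ and $\kappa[\pi_1(G)]$.
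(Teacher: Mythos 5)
Your proposal is correct and follows essentially the same route as the paper's (sketched) proof: both realize the $\pi_1(G)=\mathbb{X}_{\bullet}(T)/\mathbb{Z}\cdot\Delta^{\vee}(G,T)$ grading of $\textnormal{Sat}_{G,\kappa}$ — compatible with $\star$ — as a central diagonalizable subgroup of $\widetilde{G}_{\kappa}$ with character group $\pi_1(G)$, argue that this subgroup lands inside the center of $H$, and then compare character groups of $Z(H)$ and $\textnormal{Hom}(\pi_1(G),\mathbb{G}_{m,\kappa})$ to read off the lattice inclusion. The paper compresses the factorization $D_\kappa\hookrightarrow\hat{T}_\kappa\to\widetilde{G}_\kappa$ and the passage through $\widetilde{G}^*_\kappa$ and $(\widetilde{G}^*_\kappa)_{\textnormal{red}}\twoheadrightarrow H$ into the single line ``Since $\hat{T}_\kappa\to H$ is a closed embedding, $Z$ is also contained in the center of $H$''; you have correctly filled in exactly the bookkeeping that statement elides, and in particular you rightly flag and handle the subtlety that $D_\kappa$ may be non-reduced when $\ell$ divides the torsion of $\pi_1(G)$ — the reason the argument works is precisely that $D_\kappa$ already sits inside the torus $\hat{T}_\kappa$, so no reduction is ever applied to $D_\kappa$ itself. (Note the paper has a small typo in the last display: the center of $H$ should be $\textnormal{Hom}(\mathbb{X}^{\bullet}(\hat{T}_{\kappa})/\mathbb{Z}\cdot \Delta(H,\hat{T}_{\kappa}),\mathbb{G}_{m,\kappa})$, with $\Delta$ rather than $\Delta^{\vee}$, which matches your final comparison.)
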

\begin{proof}
The proof is similar to that for \cite[(12.21)]{MV2} and we sketch it here. Note the Satake category $\textnormal{Sat}_{G,\kappa}$ is equipped with a grading by $\pi_0(Gr_G)\simeq \pi_1(G)=\mathbb{X}_{\bullet}(T)/\mathbb{Z}\cdot \Delta^{\vee}(G,T)$ by \cite[Proposition 1.21]{Zh2}. In addition, this grading is compatible with the tensor structure in $\textnormal{Sat}_{G,\kappa}$. Write $Z$ for the center of $G$, then it can be identified with the group scheme
\begin{equation}
    \textnormal{Hom}(\mathbb{X}_{\bullet}(T)/\mathbb{Z}\cdot \Delta^{\vee}(G,T),\mathbb{G}_{m,\kappa}).
\end{equation}
Our previous observation implies that the forgetful functor 
$$
\textnormal{Sat}_{G,\kappa}\simeq \textnormal{Rep}_{\kappa}(\widetilde{G}_{\kappa})\longrightarrow \textnormal{Rep}_{\kappa}(Z)
$$
is compatible with the grading considered above. In this way $Z$ is realized as a central subgroup of $\widetilde{G}_{\kappa}$. Since $\hat{T}_{\kappa}\rightarrow H$ is a closed embedding, $Z$ is also contained in the center of $H$. Finally, note the center of $H$ can be identified with the group scheme
\begin{equation}
    \textnormal{Hom}(\mathbb{X}^{\bullet}(\hat{T}_{\kappa})/\mathbb{Z}\cdot \Delta^{\vee}(H,\hat{T}_{\kappa}),\mathbb{G}_{m,\kappa}).
\end{equation}
Our discussion together with $(8.10)$ and $(8.11)$ completes the proof of the lemma.
\end{proof}
\begin{lemma}
The set of dominant weights of $(H,\hat{T}_{\kappa})$ is equal to $\mathbb{X}_{\bullet}^+(T)\subset \mathbb{X}_{\bullet}(T)=\mathbb{X}^{\bullet}(\hat{T}_{\kappa})$.
\end{lemma}
\begin{proof}
By our construction, we have a bijection between the set of irreducible representations of $\widetilde{G}_{\kappa}$ and that of $\widetilde{G}_{\kappa}^{*}$. Since irreducible representations restrict trivially to the unipotent radical, we get a bijection between the set of irreducible representations of $\widetilde{G}_{\kappa}$ and that of $H$. Thus, the dominant weights of $(H,\hat{T}_{\kappa})$ equals that of $(\widetilde{G}_{\kappa},\hat{T}_{\kappa})$.
\\

Let $\lambda\in\mathbb{X}_{\bullet}(T)$ be a dominant weight of $(\widetilde{G}_{\kappa},\hat{T}_{\kappa})$ and write the $L^{\widetilde{G}_{\kappa}}(\lambda)$ for the irreducible representation of $\widetilde{G}_{\kappa}$ associated to $\lambda$. Assume $\mu\in \mathbb{X}_{\bullet}^+(T)$ be a dominant coweight of $G$ such that the simple perverse sheaf corresponding to $L(\lambda)$ is $\textnormal{IC}_{\mu}$. Note in the Grothendieck group of $\textnormal{Sat}_{G,\kappa}$, we have
$$
[\textnormal{IC}_{\mu}]=\Big [{^{p}}j_{\mu,*}\underline{\kappa}_{Gr_{\mu}}[(2\rho,\mu)]\Big ]+\sum_{\nu\in\mathbb{X}_{\bullet}^+(T),\nu<\mu} a_{\nu}^{\mu}\Big [{^{p}}j_{\nu,!}\underline{\kappa}_{Gr_{\nu}}[(2\rho,\nu)]\Big ].
$$
Then we conclude that $\lambda=\mu\in\mathbb{X}_{\bullet}^+(T)$. 
\\

On the other hand, if $\mu\in\mathbb{X}_{\bullet}^+(T)$, then the weights of the $\widetilde{G}_{\kappa}$-representation which corresponds to $^{p}j_{\mu,!}\underline{\kappa}_{Gr_{\mu}}$ are independent of the coefficient $\kappa$ by \cite[Proposition.  8.1]{MV2}. Hence, they are weights of the irreducible $\hat{G}_{\bar{\mathbb{Q}}_{\ell}}$-representation of highest weight $\mu$. Thus $\mu$ is a dominant weight of $(\widetilde{G}_{\kappa},\hat{T}_{\kappa})$. 
\end{proof}

\begin{lemma}
The Weyl groups $W_G$ and $W_H$ coincide when considered as automorphism groups of $\mathbb{X}_{\bullet}(T)$, together with their respective subsets of simple reflections $S_G$ and $S_H$, coincide.
\end{lemma}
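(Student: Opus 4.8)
The plan is to combine Lemma 8.7 with elementary reflection-group theory, reducing the one substantial point to the semisimple case already settled in Lemma 8.5. Work in $V := \mathbb{Q} \otimes_{\mathbb{Z}} \mathbb{X}_{\bullet}(T)$, on which $W_G$ acts through its action on cocharacters of $T$ and $W_H$ acts through its action on characters of $\hat{T}_{\kappa}$, using the identification $\mathbb{X}^{\bullet}(\hat{T}_{\kappa}) = \mathbb{X}_{\bullet}(T)$; both are finite reflection groups on $V$. By Lemma 8.7 the closed cone $C \subset V$ spanned by the dominant coweights $\mathbb{X}_{\bullet}^{+}(T)$ of $(G,B,T)$ coincides with the cone of dominant weights of $(H, B_H, \hat{T}_{\kappa})$, so $C$ is simultaneously the closed dominant Weyl chamber of $W_G$ and of $W_H$. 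Moreover $2\rho^{\vee}$, the sum of the positive coroots of $G$, satisfies $\langle \alpha_i, 2\rho^{\vee} \rangle = 2 > 0$ for every simple root $\alpha_i$ of $G$, hence lies in the interior of $C$ and is therefore a regular point for both $W_G$ and $W_H$; thus $C$ is a genuine common Weyl chamber.

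First I would use that a finite reflection group is generated by the reflections in the walls of any one of its chambers, these being precisely its simple reflections and being canonically indexed by the walls. Applied to the common chamber $C$, this yields a bijection between $S_G$ and $S_H$ matching the simple reflections of the two groups attached to a common wall $H_0$ of $C$; let $s \in S_G$ and $s' \in S_H$ be such a matched pair. A reflection of $V$ is determined by its fixed hyperplane together with its $(-1)$-eigenline, and $s, s'$ already share the fixed hyperplane $H_0$, so the desired conclusion $S_G = S_H$ (whence $W_G = \langle S_G \rangle = \langle S_H \rangle = W_H$, with simple reflections matched) reduces to the assertion that the $(-1)$-eigenlines of $s$ and $s'$ agree. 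These eigenlines are, respectively, the line through the corresponding simple coroot of $G$ and the line through the corresponding simple root of $H$; equivalently, one must show that the a priori only positive constant relating the simple roots of $(H, B_H, \hat{T}_{\kappa})$ to the simple coroots of $(G,B,T)$ furnished by Lemma 8.7 equals $1$.

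To establish this I would pass to the semisimple case: by Lemma 4.6 the Satake category of $G$ is tensor-equivalent to that of $G/Z$, which is semisimple of adjoint type, and the Weyl group together with its set of simple reflections is unaffected by this passage since it acts trivially on the central part of $V$ and depends only on the root system; hence it suffices to prove the statement for $G$ semisimple, where it is part of Lemma 8.5. (Alternatively one combines the lattice inclusion of Lemma 8.6 with its reverse, obtained after the same reduction, to fix the eigenlines directly.) The \emph{main obstacle} is precisely this last step --- upgrading ``the matched simple reflections fix a common wall'' to ``the matched simple reflections are equal'', i.e.\ pinning the proportionality constant to $1$: everything preceding it (the common chamber, the bijection of walls, generation by simple reflections) is formal, whereas this step genuinely uses lattice-level information and the reduction to the already-settled semisimple case.
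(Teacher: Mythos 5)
Your approach is genuinely different from the paper's, and it contains a circularity that undermines the key step. The paper proves the lemma directly, without any reduction to a special case: it first shows $W_H\cdot\lambda = W_G\cdot\lambda$ for every dominant $\lambda$ by identifying both orbits with the set of extremal points of the convex hull of weights of the corresponding irreducible representation, then characterizes $S_G\cdot\lambda$ (for $\lambda$ regular) as the set of $\mu\in W_G\cdot\lambda$ joined to $\lambda$ by an extremal line segment of that hull, and finally shows that for each $s_G\in S_G$ the matching $s_H\in S_H$ (obtained from a single regular $\lambda$) agrees with $s_G$ on all regular $\lambda'$ by observing that the direction of the segment from $\lambda$ to $s_G\lambda$ is fixed by the wall. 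You instead set up a common dominant chamber $C$, match simple reflections by the walls of $C$, and then must pin down the $(-1)$-eigenlines; you correctly identify this as the crux.

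The gap is in how you pin down the eigenlines. You propose to quote Lemma 8.5 for the semisimple case, but Lemma 8.5's first proof invokes $(8.5)$ --- which is precisely the assertion $W_G=W_H$ with matched simple reflections, i.e.\ the content of the present lemma --- both to conclude that $H$ is semisimple and to pass from $\Delta_s$ to $\Delta$. The paper's \emph{second} proof of Lemma 8.5 uses Lemma 8.8 explicitly, so either route makes your argument circular. Your parenthetical fallback, ``combine the lattice inclusion of Lemma 8.6 with its reverse,'' is Lemma 8.9, whose proof begins ``Lemma 8.8 then implies\ldots'' --- again circular. Independently of the circularity, the reduction via Lemma 4.6 is also stated too strongly: Lemma 4.6 gives an equivalence $P_{L^+G}(Gr_G,\Lambda)\simeq P_{L^+(G/Z)}(Gr_G,\Lambda)$ between perverse-sheaf categories on the \emph{same} space $Gr_G$ with different equivariance structures; it does not give a tensor equivalence between $\textnormal{Sat}_{G,\kappa}$ and $\textnormal{Sat}_{G/Z,\kappa}$. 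In the paper's own use of Lemma 4.6 (second proof of Lemma 8.5), $\textnormal{Sat}_{G,\kappa}$ is realized only as a tensor \emph{sub}category of $\textnormal{Sat}_{G_{\mathrm{ad}},\kappa}$, corresponding to the identity coset of $\pi_1(G)$, which on the dual side gives a quotient map of group schemes rather than an isomorphism. To make your outline work you would need to prove the eigenline statement directly at the lattice level without invoking anything downstream of Lemma 8.8 --- which is essentially what the paper's convex-geometry argument accomplishes via the extremal-edge characterization.
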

\begin{proof}
The proof of this lemma is completely similar to the proof of \cite[Lemma 14.9]{BR} and we sketch it here. For any $\lambda\in\mathbb{X}_{\bullet}^+(T)$, we consider it as a dominant weight of $(H,\hat{T}_{\kappa})$. Then the orbit $W_H\cdot \lambda$ is the set of extremal points of the convex polytope consisting of the convex hull of weights of the irreducible $H$-representation $L^H(\lambda)$. Since the set of irreducible representations of $H$ are bijective to that of $\widetilde{G}_{\kappa}$, we conclude that 
\begin{equation}
    W_H\cdot \lambda=W_G\cdot \lambda.
\end{equation}

For any $\lambda\in\mathbb{X}_{\bullet}^+(T)$, we call $\lambda$ regular if it is not orthogonal to any simple root of $(G,T)$. Then for a regular $\lambda\in \mathbb{X}_{\bullet}^+(T)$, the orbit $S_G\cdot \lambda\subset W\cdot \lambda$ is the subset of $W_G\cdot\lambda$ consisting of elements $\mu$ such that the line segment connecting $\lambda$ and $\mu$ is extremal in the convex hull of $W_G\cdot \lambda$. By $(8.12)$, we have the same description for the orbit $S_H\cdot \lambda$. Thus, 
\begin{equation}
    S_G\cdot\lambda=S_H\cdot \lambda.
\end{equation}

Choose an arbitrary $s_G\in S_G$. For any $\lambda\in \mathbb{X}_{\bullet}^+(T)$ regular, by $(8.13)$ there exists $s_H\in S_H$ such that $s_G\cdot \lambda=s_H\cdot \lambda$. In addition, the direction of the line segment connecting $\lambda$ with $s_G\cdot \lambda$ is determined by the line segment joining the coroot of $G$ associated with $s_G$ with the root of $H$ associated with $s_H$. Thus for any any other $\lambda'\in \mathbb{X}_{\bullet}^+(T)$ regular, we also have $s_G\cdot\lambda'=s_H\cdot \lambda'$. It follows that $s_G=s_H$ and thus $S_G=S_H$. Thus we deduce that $W_G=W_H$.
\end{proof}
\begin{lemma}
We have the following inclusion of lattices
$$
\mathbb{Z}\cdot \Delta (G,T)\subseteq \mathbb{Z}\cdot \Delta^{\vee}(H,\hat{T}_{\kappa}). 
$$
\end{lemma}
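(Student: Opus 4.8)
The argument is the mixed-characteristic analogue of \cite[Lemma 14.10]{BR} (see also \cite[\S 12]{MV2}); the plan is to compare, ray by ray, the simple roots of $(G,B,T)$ with the simple coroots of $(H,B_H,\hat{T}_{\kappa})$, to promote that comparison to the level of Weyl reflections, and finally to use Lemma $8.6$ to pin the comparison factors down to integers.

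First I would feed in Lemma $8.7$: inside $\mathbb{X}^{\bullet}(\hat{T}_{\kappa})\otimes\mathbb{Q}=\mathbb{X}_{\bullet}(T)\otimes\mathbb{Q}$, the dominant weights of $(H,\hat{T}_{\kappa})$ and the dominant coweights of $G$ generate the same rational polyhedral cone. Passing to polar cones (Farkas duality) turns this into the equality of simplicial cones
$$
\mathbb{Q}_{\geq 0}\cdot\Delta_s(G,B,T)=\mathbb{Q}_{\geq 0}\cdot\Delta_s^{\vee}(H,B_H,\hat{T}_{\kappa})
$$
inside $\mathbb{X}^{\bullet}(T)\otimes\mathbb{Q}=\mathbb{X}_{\bullet}(\hat{T}_{\kappa})\otimes\mathbb{Q}$; here I use that simple roots, resp.\ simple coroots, are linearly independent, so these cones are simplicial and their extremal rays are exactly the rays spanned by the individual simple roots, resp.\ simple coroots. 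Comparing extremal rays then yields a bijection $\alpha\mapsto\beta_{\alpha}^{\vee}$ from $\Delta_s(G,B,T)$ onto $\Delta_s^{\vee}(H,B_H,\hat{T}_{\kappa})$ together with positive rationals $a_{\alpha}$ such that $\beta_{\alpha}^{\vee}=a_{\alpha}\,\alpha$; write $\beta_{\alpha}$ for the simple root of $H$ whose coroot is $\beta_{\alpha}^{\vee}$.

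Next I would invoke Lemma $8.8$: the simple reflection $s_{\alpha}\in W_G$ is a simple reflection of $W_H$, and since its fixed hyperplane is $\ker\langle\alpha,-\rangle$, which contains only one simple coroot of $H$ up to scalar, it must be $s_{\beta_{\alpha}}$. Writing the two reflection formulas on $\mathbb{X}_{\bullet}(T)$, namely $x\mapsto x-\langle\alpha,x\rangle\alpha^{\vee}$ and $x\mapsto x-\langle\beta_{\alpha}^{\vee},x\rangle\beta_{\alpha}$, and comparing the rank-one correction terms upgrades the ray comparison to the identity $\beta_{\alpha}=\tfrac{1}{a_{\alpha}}\alpha^{\vee}$ in $\mathbb{X}_{\bullet}(T)$. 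Now $\mathbb{Z}\cdot\Delta(H,\hat{T}_{\kappa})$ is freely generated by the $\beta_{\alpha}=\tfrac{1}{a_{\alpha}}\alpha^{\vee}$ and $\mathbb{Z}\cdot\Delta^{\vee}(G,T)$ by the $\alpha^{\vee}$, so the inclusion $\mathbb{Z}\cdot\Delta(H,\hat{T}_{\kappa})\subseteq\mathbb{Z}\cdot\Delta^{\vee}(G,T)$ of Lemma $8.6$ forces $\tfrac{1}{a_{\alpha}}\in\mathbb{Z}$ for every $\alpha$. Hence $\alpha=\tfrac{1}{a_{\alpha}}\beta_{\alpha}^{\vee}\in\mathbb{Z}\cdot\Delta^{\vee}(H,\hat{T}_{\kappa})$, and since the simple roots of $G$ generate $\mathbb{Z}\cdot\Delta(G,T)$ this gives the asserted inclusion.

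The only point that requires real care is the bookkeeping rather than any new input: the identifications $\mathbb{X}^{\bullet}(\hat{T}_{\kappa})=\mathbb{X}_{\bullet}(T)$ and $\mathbb{X}_{\bullet}(\hat{T}_{\kappa})=\mathbb{X}^{\bullet}(T)$ interchange the roles of roots and coroots of $H$ with those of $G$, so one must keep track of which character lattice each reflection acts on, and one must verify that the bijection of simple reflections produced by Lemma $8.8$ agrees with the bijection of extremal rays of the first step before the scalars $a_{\alpha}$ may legitimately be compared. Everything else is formal.
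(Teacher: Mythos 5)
Your proof is correct and follows the paper's argument step for step: Lemma 8.7 gives the rational cone equality for simple roots of $G$ versus simple coroots of $H$, Lemma 8.8 (comparing Weyl reflections) pins down the reciprocal scaling between $\alpha^{\vee}$ and the simple root $\beta_{\alpha}$ of $H$, and Lemma 8.6 then forces the scalars into $\mathbb{Z}$. The only difference is that you explicitly spell out the check that the ray-bijection and the reflection-bijection agree (via fixed hyperplanes), which the paper leaves implicit, and you avoid a small typo in the paper's display of the reflection identity.
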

\begin{proof}
The proof is similar to the one for \cite[Lemma 14.10]{BR} and we sketch it here. Firstly, we observe by Lemma $8.7$ that 
\begin{equation}
\mathbb{Q}_{+}\cdot \Delta_s^{\vee}(H,B_H,\hat{T}_{\kappa})=\mathbb{Q}_{+}\cdot \Delta_s(G,B,T).
\end{equation}
This is because both sets consist of extremal rays of the rational convex polyhedral cone determined by $\{\lambda\in\mathbb{Q}\otimes_{\mathbb{Z}}\mathbb{X}^{\bullet}(T)\vert \textnormal{ for any }\mu\in \mathbb{X}_{\bullet}^+(T),\big<\lambda,\mu\big>\geq 0\}$. For $\mu\in \Delta_s(G,B,T)$, it follows from $(8.14)$ that there exists $a\in \mathbb{Q}_+\backslash \{0\}$ such that $a\mu\in \Delta_s^{\vee}(H,B_H,\hat{T}_{\kappa})$. Lemma $8.8$ then implies that
$$
\textnormal{id}-\big<\mu^{\vee},\bullet\big>=\textnormal{id}-\big<(a\mu)^{\vee},\bullet\big>(a\mu)
$$
as an automorphism of $\mathbb{X}^{\bullet}(T)=\mathbb{X}_{\bullet}(\hat{T}_{\kappa})$. Thus, $(a\mu)^{\vee}=\frac{1}{a}\mu^{\vee}$. Note Lemma $8.6$ shows that $a\mu\in \mathbb{Z}\cdot \Delta^{\vee}(G,T)$. Thus, $\frac{1}{a}\in\mathbb{Z}$ and $\mu=\frac{1}{a}(a\mu)\in\mathbb{Z}\cdot \Delta^{\vee}(H,\hat{T}_{\kappa})$.
\end{proof}
The arguments above prepare us for a second proof of Lemma $8.5$ as follows.
\begin{proof}
If $G$ is in particular semisimple of adjoint type, then $\mathbb{Z}\cdot \Delta(G,T)=\mathbb{X}^*(T)$. Lemma $8.9$ then implies that $\mathbb{Z}\cdot \Delta(G,T)=\mathbb{Z}\cdot \Delta^{\vee}(H,\hat{T}_{\kappa})$. Then the arguments in the proof of Lemma $8.9$ imply that $\Delta_s(G,T)=\Delta^{\vee}_s(H,\hat{T}_{\kappa})$. In addition, $\Delta_s^{\vee}(G,B,T)=\Delta_s(H,B_H,\hat{T}_{\kappa})$ and the canonical bijections between the roots and coroots of $H$ and $G$ coincide. It then follows from Lemma $8.8$ that $\Delta(H,B_H,\hat{T}_{\kappa})=\Delta^{\vee}(G,T)$ and $\Delta^{\vee}(H,\hat{T}_{\kappa})=\Delta(G,T)$. Thus the root datum of $H$ with respect to $\hat{T}_{\kappa}$ is dual to that of $(G,T)$. Then the dimension estimate $(8.1)$ concludes the proof of the lemma in the semisimple of adjoint type case.
\\

Assume $G$ is a general semisimple reductive group scheme. We denote by $G_{\textnormal{ad}}$ the adjoint quotient of $G$ and $T_{\textnormal{ad}}$ the quotient of the maximal torus $T$. The construction in \S7 goes through and we get the group scheme $(\widetilde{G}_{\textnormal{ad}})_{\kappa}$. As noted in the proof of Lemma $8.6$, the Satake category $\textnormal{Sat}_{G_{\textnormal{ad}},\kappa}$ admits a grading  by the finite group $\pi_1(G_{\textnormal{ad}})/\pi_{1}(G)$ which is compatible with the tensor structure of $\textnormal{Sat}_{G_{\textnormal{ad}},\kappa}$. By Lemma $4.6$, the category $\textnormal{Sat}_{G,\kappa}$ can be realized as a tensor subcategory of $\textnormal{Sat}_{G_{\textnormal{ad}},\kappa}$ corresponding to the identity coset of $\pi_1(G)$. Thus, we have a surjective quotient $$
(\widetilde{G})_{\textnormal{ad},\kappa}\twoheadrightarrow \widetilde{G}_{\kappa}
$$
with finite central kernel given by $\textnormal{Hom}(\pi_1(G_{\textnormal{ad}})/\pi_{1}(G),\mathbb{G}_{m,\kappa})$. Hence, $\widetilde{G}_{\kappa}$ is reductive and in particular semisimple. The result for $G$ being semisimple of adjoint type applies here to complete the proof. 
\end{proof}
Now, we complete the final step of identifying the group schemes.

\begin{lemma}
Let $G$ be a general connected reductive group, then the same result as in Lemma $8.6$ holds. 
\end{lemma}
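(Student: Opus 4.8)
The plan is to deduce the general reductive case from the semisimple case already settled in Lemma $8.5$, by passing to a central isogeny in the same spirit as the reduction from adjoint to general semisimple groups carried out there. Let $G_{\textnormal{der}}$ be the derived group of $G$ (semisimple over $\mathcal{O}$), let $T_{\textnormal{der}}=(T\cap G_{\textnormal{der}})^{\circ}$ be a maximal torus of it, and let $S=Z(G)^{\circ}$ be the connected centre, a torus. The multiplication map $\pi\colon G_{\textnormal{der}}\times S\to G$ is a central isogeny with finite kernel, and it induces an open and closed immersion of affine Grassmannians $Gr_{G_{\textnormal{der}}}\times Gr_{S}=Gr_{G_{\textnormal{der}}\times S}\hookrightarrow Gr_{G}$ onto the union of the connected components of $Gr_{G}$ indexed by the finite-index subgroup $\pi_{1}(G_{\textnormal{der}}\times S)\subset\pi_{1}(G)=\mathbb{X}_{\bullet}(T)/\mathbb{Z}\cdot\Delta^{\vee}(G,T)$; here $\pi_{1}(G_{\textnormal{der}}\times S)\to\pi_{1}(G)$ is injective because $\mathbb{Z}\cdot\Delta^{\vee}(G,T)$ already lies in $\mathbb{X}_{\bullet}(T_{\textnormal{der}}\times S)$.

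First I would identify Satake categories. Arguing as in Lemma $4.6$ (see also \cite[Proposition A.1]{MV2}), since $L^{+}G$ and $L^{+}(G_{\textnormal{der}}\times S)$ act on this union of components with the same orbits — the Schubert cells, which have connected stabilisers — pullback along the immersion above identifies $\textnormal{Sat}_{G_{\textnormal{der}}\times S,\kappa}\simeq\textnormal{Sat}_{G_{\textnormal{der}},\kappa}\boxtimes\textnormal{Sat}_{S,\kappa}$ with the full subcategory of $\textnormal{Sat}_{G,\kappa}$ of objects supported on those components. Being cut out by an open and closed condition, this subcategory is a tensor subcategory stable under subquotients and compatible with the fibre functor $\textnormal{H}^{*}$, so the Tannakian formalism of \S $7$ yields a surjection $\widetilde{G}_{\kappa}\twoheadrightarrow\widetilde{(G_{\textnormal{der}}\times S)}_{\kappa}=\widetilde{(G_{\textnormal{der}})}_{\kappa}\times\hat{S}_{\kappa}$ whose kernel is the finite central diagonalisable group $\textnormal{Hom}(\pi_{1}(G)/\pi_{1}(G_{\textnormal{der}}\times S),\mathbb{G}_{m,\kappa})$, where $\hat{S}_{\kappa}$ denotes the dual torus of $S$.

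Then I would conclude. By Lemma $8.5$ applied to the semisimple group $G_{\textnormal{der}}$, the group $\widetilde{(G_{\textnormal{der}})}_{\kappa}$ is connected reductive with root datum dual to that of $(G_{\textnormal{der}},T_{\textnormal{der}})$, so $\widetilde{(G_{\textnormal{der}})}_{\kappa}\times\hat{S}_{\kappa}$ is reductive; since $\widetilde{G}_{\kappa}$ is connected (Lemma $8.3$) and maps onto it with finite central kernel, $\widetilde{G}_{\kappa}$ is a central isogeny cover of a reductive group, hence reductive. A central isogeny leaves the sets of roots and coroots unchanged, altering only the character and cocharacter lattices, so the roots of $(\widetilde{G}_{\kappa},\hat{T}_{\kappa})$ are $\Delta^{\vee}(G_{\textnormal{der}},T_{\textnormal{der}})=\Delta^{\vee}(G,B,T)$, the coroots are $\Delta(G_{\textnormal{der}})=\Delta(G,B,T)$, and the character lattice of $\hat{T}_{\kappa}$ is $\mathbb{X}_{\bullet}(T)$ with $\hat{T}_{\kappa}$ a maximal torus by Lemma $8.4$; hence the root datum of $(\widetilde{G}_{\kappa},\hat{T}_{\kappa})$ is dual to that of $(G,T)$, which is statement $(8.6)$ for general $G$. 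Feeding this, together with the flatness of $\widetilde{G}_{\mathbb{Z}_{\ell}}$, the reductivity of $\widetilde{G}_{\mathbb{Q}_{\ell}}$ from \cite{Zh2}, and the dimension estimate $(8.1)$, into Theorem $8.2$ verifies all the conditions for quasi-reductiveness of $\widetilde{G}_{\mathbb{Z}_{\ell}}$; therefore $\widetilde{G}_{\mathbb{Z}_{\ell}}$ is reductive with root datum dual to $(G,T)$, so $\widetilde{G}_{\mathbb{Z}_{\ell}}\cong\hat{G}_{\mathbb{Z}_{\ell}}$, and by Proposition $8.1$ also $\widetilde{G}_{\mathbb{F}_{\ell}}\cong\hat{G}_{\mathbb{F}_{\ell}}$.

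I expect the main obstacle to be the geometric input of the first two paragraphs: checking that $Gr_{G_{\textnormal{der}}}\times Gr_{S}\hookrightarrow Gr_{G}$ really is open and closed onto a union of $L^{+}G$-stable components, and that the resulting equivalence of Satake categories is compatible with convolution and stable under subquotients, so that the Tannakian dictionary delivers the asserted quotient of dual groups. This is the precise analogue of the step ``$\textnormal{Sat}_{G,\kappa}$ is a tensor subcategory of $\textnormal{Sat}_{G_{\textnormal{ad}},\kappa}$'' used in the second proof of Lemma $8.5$, and I would settle it by the same combination of Lemma $4.6$-type equivariance comparisons and the description of $\pi_{0}(Gr_{G})$ from \cite[Proposition 1.21]{Zh2}; once it is in place, the remainder is routine bookkeeping with central isogenies and root data.
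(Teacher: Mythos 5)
Your proposal is correct and reaches the same destination, but travels a genuinely different route from the paper. The paper reduces via the quotient by the connected centre: it works with the short exact sequence $1\to A\to G\to G/A\to 1$ (with $A=Z(G)^{\circ}$ a torus and $G/A$ semisimple), exhibits $Gr_G$ as a trivial $Gr_A$-cover of $Gr_{G/A}$, obtains a sequence of functors $P_{L^+A}(Gr_A,\kappa)\xrightarrow{i_*}P_{L^+G}(Gr_G,\kappa)\xrightarrow{\pi_*}P_{L^+G/A}(Gr_{G/A},\kappa)$, and deduces an exact sequence of dual group schemes $1\to\widetilde{G/A}_{\kappa}\to\widetilde{G}_{\kappa}\to\widetilde{A}_{\kappa}\to 1$; smoothness follows because $\widetilde{G}_{\kappa}$ is then an extension of two smooth group schemes, and reductivity follows because the unipotent radical must die in the torus $\widetilde{A}_{\kappa}$ and hence land in the semisimple $\widetilde{G/A}_{\kappa}$. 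You instead reduce via the multiplication isogeny $G_{\textnormal{der}}\times S\to G$, embed $Gr_{G_{\textnormal{der}}\times S}$ as a union of connected components of $Gr_G$, recognise $\textnormal{Sat}_{G_{\textnormal{der}}\times S,\kappa}$ as a full tensor subcategory of $\textnormal{Sat}_{G,\kappa}$ stable under subquotients, and extract a surjection $\widetilde{G}_{\kappa}\twoheadrightarrow\widetilde{(G_{\textnormal{der}})}_{\kappa}\times\hat{S}_{\kappa}$ with finite central kernel identified from the $\pi_1(G)$-grading of the Satake category. These are honestly dual manoeuvres, and your version has the advantage that you never need to establish exactness of a three-term sequence of Tannakian functors (only the full embedding of a tensor subcategory closed under subquotients), which is a somewhat cleaner Tannakian input. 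On the other hand, you are relying on an open-and-closed immersion of Witt-vector affine Grassmannians under a central isogeny, which the paper never uses and which should be checked carefully in the mixed-characteristic setting; the $\pi_0(Gr_G)\simeq\pi_1(G)$ identification from \cite[Proposition 1.21]{Zh2} is the right tool, but the immersion itself deserves a few lines of justification (the paper's alternative hypothesis, that $Gr_G\to Gr_{G/A}$ is a trivial $Gr_A$-cover, is of comparable difficulty). One small thing to make explicit: to conclude that $\widetilde{G}_{\kappa}$ is reductive from the finite central cover, you should note that $\widetilde{G}_{\kappa}$ is connected (Lemma $8.3$) and perfect-field reduction gives a closed smooth subgroup scheme $(\widetilde{G}_{\kappa})_{\textnormal{red}}$ still surjecting onto the reductive $\widetilde{G'}_{\kappa}$ with finite kernel; this pins down the quasi-reductivity input to Theorem $8.2$ as cleanly as the paper's argument does.
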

\begin{proof}
We sketch a proof similar to the arguments for \cite[\S 12]{MV2} and \cite[Lemma 14.13]{BR}. Denote  by $Z(G)$ the center of $G$ and let $A=Z(G)^{\circ}$. Then $A$ is a torus and $G/A$ is semisimple. As in \textit{loc.cit}, the exact sequence
$$
1\rightarrow A\rightarrow G\rightarrow G/A\longrightarrow 1
$$
induces maps
$$
Gr_A\xrightarrow{i} Gr_{G}\xrightarrow{\pi} Gr_{G/A}
$$
which exhibit $Gr_{G}$ as a trivial $Gr_A$-cover over $Gr_{G/A}$. This induces an exact sequence of functors
\begin{equation}
P_{L^+A}(Gr_A,\kappa)\xrightarrow{i_*} P_{L^+G}(Gr_{G},\kappa) \xrightarrow{\pi_*} P_{L^+G/A}(Gr_{G/A},\kappa).
\end{equation}
Note that $(Gr_A)_{\textnormal{red}}$ is a set of discrete points indexed by $\mathbb{X}_{\bullet}^+(A)$, then taking pushforward along $i$ gives a fully faithful functor $i_*: P_{L^+A}(Gr_A,\kappa)\rightarrow P_{L^+G}(Gr_{G},\kappa)$. The functor $\pi_*$ is made sense by Lemma $4.6$ and is essentially surjective.
\\

Applying the Tannakian construction as in \S 7, we get flat affine group schemes $\widetilde{A}_{\kappa}$ and $\widetilde{(G/A)}_{\kappa}$. Lemma $8.5$ implies that $\widetilde{A}_{\kappa}$ and $\widetilde{(G/A)}_{\kappa}$ are isomorphic to the dual groups of $H$ and $G/A$ respectively. The same arguments in \cite[\S 12]{MV2} and \cite[\S 14]{BR} apply here to deduce that the sequence 
$$
1\longrightarrow \widetilde{G/A}_{\kappa}\longrightarrow \widetilde{G}_{\kappa}\longrightarrow \widetilde{A}_{\kappa}\longrightarrow 1
$$
induced by $(8.9)$ is exact. Then $\widetilde{G}_{\kappa}$ is identified as the extension of smooth group schemes $\widetilde{A}_{\kappa}$ and $\widetilde{G/A}_{\kappa}$ and is thus also smooth. Moreover, the unipotent radical of $\widetilde{G}_{\kappa}$ has trivial image in the torus $\widetilde{A}_{\kappa}$. Hence it is included in $\widetilde{G/A}_{\kappa}$. Since
the latter group is semisimple it follows that $\widetilde{G}_{\kappa}$ is also reductive. Argue as in \cite[Lemma 14.14]{BR}, we complete the proof of the lemma.
\end{proof}

Thus we identify the group scheme $\widetilde{G}_{\mathbb{Z}_{\ell}}$ which arises from the general Tannkian construction with the Langlands dual group $\hat{G}_{\mathbb{Z}_{\ell}}$. We have established our main theorem.
\begin{thm}
There is an equivalence of tensor categories between $P_{L^+G}(Gr_G,\Lambda)$ and the category of representations of the Langlands dual group $\hat{G}_{\Lambda}$ of $G$ on finitely generated $\Lambda$-modules for $\Lambda=\mathbb{F}_{\ell}$, and $\mathbb{Z}_{\ell}$. 
\end{thm}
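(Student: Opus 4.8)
The plan is to assemble the structures produced in Sections 3--7 into a Tannakian equivalence and then identify the resulting group scheme with the dual group; the case $\Lambda=\mathbb{Z}_{\ell}$ carries all the content, and the case $\Lambda=\mathbb{F}_{\ell}$ will follow by base change. First I would recall that all inputs of the generalized Deligne--Milne formalism are now in hand: $\textnormal{H}^{*}$ is $\Lambda$-linear, exact and faithful (Proposition 4.4), monoidal and compatible with the weight decomposition (Proposition 6.4), sends the unit $\textnormal{IC}_{0}$ to $\Lambda$, and by Section 7 we have $\textnormal{Sat}_{G,\mathbb{Z}_{\ell}}\simeq\textnormal{Comod}_{B(\mathbb{Z}_{\ell})}$ for a flat commutative bialgebra $B(\mathbb{Z}_{\ell})$ with antipode. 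Proposition 8.1 then furnishes an equivalence of tensor categories $\textnormal{Sat}_{G,\mathbb{Z}_{\ell}}\simeq\textnormal{Rep}_{\mathbb{Z}_{\ell}}(\widetilde{G}_{\mathbb{Z}_{\ell}})$ with $\widetilde{G}_{\mathbb{Z}_{\ell}}=\textnormal{Spec}\,B(\mathbb{Z}_{\ell})$ affine and $\mathbb{Z}_{\ell}$-flat, and $\widetilde{G}_{\mathbb{F}_{\ell}}=\widetilde{G}_{\mathbb{Z}_{\ell}}\times_{\mathbb{Z}_{\ell}}\mathbb{F}_{\ell}$.

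Next I would verify that $\widetilde{G}_{\mathbb{Z}_{\ell}}$ is quasi-reductive over $\mathbb{Z}_{\ell}$ in the sense of \cite{PY}. Condition (1), affine flatness, is immediate from the freeness of $B(\mathbb{Z}_{\ell})$. Condition (2), that $\widetilde{G}_{\mathbb{Q}_{\ell}}$ is connected and smooth, holds because Zhu's $\bar{\mathbb{Q}}_{\ell}$-equivalence together with $(7.1)$ identifies it with the connected reductive group whose root datum is dual to that of $G$. For condition (3) one passes to the finite-type quotients $\widetilde{G}_{\kappa}^{*}$ (with $\kappa=\bar{\mathbb{F}}_{\ell}$): Lemma 8.3 gives connectedness of $\widetilde{G}_{\kappa}$, Lemma 8.4 exhibits $\hat{T}_{\kappa}$ as a maximal torus of the reductive quotient $H$ of $(\widetilde{G}_{\kappa}^{*})_{\textnormal{red}}$, and the flatness of $\widetilde{G}_{\mathbb{Z}_{\ell}}$ yields the dimension estimate $\dim G=\dim\widetilde{G}_{\mathbb{Q}_{\ell}}\geq\dim(\widetilde{G}_{\kappa})_{\textnormal{red}}$ of $(8.1)$.

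The crux, and what I expect to be the main obstacle, is matching the root data on the special fibre, i.e. proving $(8.6)$: $\Delta(H,B_{H},\hat{T}_{\kappa})=\Delta^{\vee}(G,B,T)$ and $\Delta^{\vee}(H,B_{H},\hat{T}_{\kappa})=\Delta(G,B,T)$. My plan is a dévissage on $G$. For $G$ semisimple (Lemma 8.5) one combines the bijection on irreducible representations with $(8.4)$ and $(8.5)$ — coincidence of dominant (co)weights and of Weyl groups together with their simple reflections — and a sandwich of root lattices (Lemmas 8.6--8.9) to pin down $\Delta_{s}$ and $\Delta_{s}^{\vee}$, treating the adjoint case first and then passing to a general semisimple $G$ via the $\pi_{1}(G)$-grading on $\textnormal{Sat}_{G_{\textnormal{ad}},\kappa}$ and Lemma 4.6. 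For general reductive $G$ (Lemma 8.10) one sets $A=Z(G)^{\circ}$, uses the trivial-cover situation $Gr_{A}\hookrightarrow Gr_{G}\twoheadrightarrow Gr_{G/A}$ and the induced exact sequence $1\to\widetilde{G/A}_{\kappa}\to\widetilde{G}_{\kappa}\to\widetilde{A}_{\kappa}\to 1$ to conclude that $\widetilde{G}_{\kappa}$ is smooth and reductive with the predicted root datum. Once $(8.6)$ holds, condition (3) of quasi-reductiveness and condition (4) of Theorem 8.2 are verified, so Theorem 8.2 shows $\widetilde{G}_{\mathbb{Z}_{\ell}}$ is reductive with root datum dual to that of $G$; since split reductive group schemes over $\mathbb{Z}_{\ell}$ are classified by their root data, $\widetilde{G}_{\mathbb{Z}_{\ell}}\simeq\hat{G}_{\mathbb{Z}_{\ell}}$.

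Finally, base change along $\mathbb{Z}_{\ell}\to\mathbb{F}_{\ell}$ gives $\widetilde{G}_{\mathbb{F}_{\ell}}=\widetilde{G}_{\mathbb{Z}_{\ell}}\times_{\mathbb{Z}_{\ell}}\mathbb{F}_{\ell}\simeq\hat{G}_{\mathbb{F}_{\ell}}$, and Proposition 8.1 then yields $\textnormal{Sat}_{G,\mathbb{F}_{\ell}}\simeq\textnormal{Rep}_{\mathbb{F}_{\ell}}(\hat{G}_{\mathbb{F}_{\ell}})$ as tensor categories. Combining this with the $\Lambda=\mathbb{Z}_{\ell}$ case establishes the theorem.
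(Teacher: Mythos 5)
Your proposal follows the paper's argument essentially step for step: the generalized Tannakian formalism of Proposition 8.1, the Prasad--Yu quasi-reductivity criterion (Theorem 8.2) with conditions verified via Lemma 8.3, Lemma 8.4, the dimension estimate (8.1), the root-datum matching (8.6) obtained by dévissage through the semisimple case (Lemma 8.5, via Lemmas 8.6--8.9) and the general reductive case (Lemma 8.10), and finally the classification of split reductive group schemes over $\mathbb{Z}_{\ell}$ by their root data together with base change to $\mathbb{F}_{\ell}$. This is the same route the paper takes.
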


\bibliographystyle{amsplain}

\end{document}